\documentclass[11pt,twoside, reqno]{amsart}

\usepackage{amssymb}
\usepackage{amsmath}
\usepackage{mathrsfs}
\usepackage{amsthm}
\usepackage{amsfonts}
\usepackage{txfonts}
\usepackage{enumerate}
\usepackage{hyperref}
\hypersetup{hidelinks}
\usepackage{graphicx}
\usepackage{subfigure}
\usepackage{float}
\usepackage{latexsym,amssymb}
\usepackage{color}
\usepackage{graphicx}
\usepackage{pgf,tikz}
\usepackage{mathrsfs}
\usepackage{fancyhdr}
\usepackage[title]{appendix}
\usetikzlibrary{arrows}

\allowdisplaybreaks
\textwidth=15.5truecm
\textheight=22truecm
\oddsidemargin 0.35cm
\evensidemargin 0.35cm
\usepackage{indentfirst}
\parindent=13pt
\usepackage{setspace}
\usepackage{geometry}
\geometry{left=2.5cm, right=2.5cm, top=2.5cm, bottom=2.5cm}

\newtheorem{theorem}{Theorem}[section]
\newtheorem{proposition}[theorem]{Proposition}
\newtheorem{corollary}[theorem]{Corollary}
\newtheorem{lemma}[theorem]{Lemma}
\newtheorem{remark}[theorem]{Remark}

\numberwithin{equation}{section} 

\numberwithin{equation}{section}

\newcommand{\R}{\mathbb{R}}

\newcommand{\C}{\mathbb{C}}
\newcommand{\N}{\mathbb{N}}

\newcommand{\ben}{\begin{eqnarray*}}
\newcommand{\enn}{\end{eqnarray*}}
\newcommand{\pa}{\partial}

\newcommand{\g}{\gamma}

\newcommand{\ve}{\varepsilon}

\newcommand{\al}{\alpha}
\newcommand{\la}{\lambda}
\newcommand{\La}{\Lambda}

\newcommand{\ol}{\overline}

\newcommand{\half}{\frac{1}{2}}

\newcommand{\na}{\nabla}
\newcommand{\be}{\begin{equation}}
\newcommand{\ee}{\end{equation}}
\newcommand{\ba}{\begin{aligned}}
\newcommand{\ea}{\end{aligned}}

\newcommand{\wt}{\tilde{w}}


\def\9{{\infty}}
\def\a{{\alpha}}

\def\g{{\gamma}}

\def\lbb{{\lambda}}
\def\t{{\theta}}

\def\calo{{\mathcal{O}}}
\def\calp{{\mathcal{P}}}

\def\bbr{{\mathbb{R}}}
\def\ve{{\varepsilon}}

\def\wt{\widetilde}

\def\ol{\overline}
\def\({\left(}
\def\){\right)}
\def\<{\langle}
\def\>{\rangle}

\begin{document}

\title
[Multi-bubble blow-up solutions to the half-wave equation]{Construction of multi-bubble blow-up solutions
to the $L^2$-critical half-wave equation}

\author{Daomin Cao}
\address{Institute of Applied Mathematics, Chinese Academy of Sciences, Beijing 100190, and University of Chinese Academy of Sciences, Beijing 100049, P.R. China}
\email{dmcao@amt.ac.cn}
\thanks{}

\author{Yiming Su}
\address{Department of mathematics,
Zhejiang University of Technology, 310014 Zhejiang, China}
\email{yimingsu@zjut.edu.cn}
\thanks{}

\author{Deng Zhang}
\address{School of mathematical sciences, CMA-Shanghai,
Shanghai Jiao Tong University, 200240, Shanghai, China}
\email{dzhang@sjtu.edu.cn}
\thanks{}

\subjclass[2010]{35B44, 35Q55, 35B40}
\keywords{Blow-up, $L^2$-critical,  half-wave equation, multi-bubbles}

\date{}


\begin{abstract}
This paper concerns the bubbling phenomena for the $L^2$-critical half-wave equation in dimension one.
Given arbitrarily finitely many distinct singularities,
we construct  blow-up solutions concentrating exactly at these singularities.
This provides the first examples of multi-bubble solutions
for the half-wave equation.
In particular, the solutions exhibit the mass quantization property.
Our proof strategy draws upon the modulation method
in \cite{K-L-R} for the single-bubble case,
and explores the localization techniques in \cite{CSZ21,RSZ21} for bubbling solutions to
nonlinear Schr\"odinger equations (NLS).
However, unlike the single-bubble or NLS cases,
different bubbles exhibit the strongest interactions in dimension one.
In order to  get sharp estimates to control strong interactions,
as well as nonlocal effects on localization functions,
we utilize the Carlder\'on estimate
and the integration representation formula of the half-wave operator,
and find that there exists a narrow room
between the orders $|t|^{2+}$ and $|t|^{3-}$
for the  remainder in the geometrical decomposition.
Based on this, a novel bootstrap scheme is introduced
to address the multi-bubble non-local structure.
\end{abstract}

\maketitle

\section{Introduction and main results} \label{Sec-Intro}

\subsection{Introduction}
We are concerned with the $L^2$-critical half-wave equation in dimension one
\begin{align}  \label{equa}
 \left\{\begin{aligned}
 &i\partial_tu=D u-|u|^2u,  \\
 &u(t_0,x)=u_0(x)\in H^{\half}(\R), \quad u: I\times\R\rightarrow\C.
 \end{aligned}\right.
\end{align}
Here, $I\subseteq\R$ is an interval containing the initial time $t_0<0$,
and the square root of Laplacian operator $D=(-\Delta)^{\half}$
is defined via the Fourier transform by
\ben
\widehat{Df}(\xi)=|\xi|\hat{f}(\xi).
\enn

The evolution problem (1.1)
was initially introduced by Laskin \cite{La} in the context of quantum mechanics, generalizing  the Feynman path integral from the Brownian-like to L\'{e}vy-like
quantum mechanical paths.
Additionally, it finds applications in a range of
physical contexts,
such as turbulence phenomena, wave propagation, continuum limits of lattice systems and models for gravitational
collapse in astrophysics, see e.g. \cite{CMMT,ES,FL,KLS,MMT} and references therein.

Apart from its applications in physics, equation (1.1) is of considerable interest from the PDE point of view.
It is a canonical model for an $L^2$-critical PDE with nonlocal dispersion given by a fractional power of the Laplacian. On one hand, equation \eqref{equa} shares similar structures with the classic $L^2$-critical focusing nonlinear Schr\"odinger equation (NLS for short)
\be\label{NLS}
 i\partial_tu+\Delta u+|u|^\frac{4}{d}u=0,\quad (t,x)\in \R\times\R^d.
\ee
Hence, it exhibits rich nondispersive dynamics, such as
singularity formation, solitary waves
and turbulence phenomena.
On the other hand,
because of the nonlocal nature of the operator,
several properties of the NLS \eqref{NLS}
important in the quantitative descriptions of long time dynamics  no longer hold for \eqref{equa},
among which are the Galilean invariance, pseudo-conformal invariance and the exponential decay property of the ground state.
These facts make it rather difficult to study long time dynamics
of solutions to the $L^2$-critical half-wave equation.

The progress on the construction of critical mass blow-up solutions
was achieved by Krieger, Lenzmann and Rapha\"{e}l \cite{K-L-R}.
The cases in dimensions two and three were studied by
Georgiev-Li \cite{G-L-2,G-L-3}.

In the large mass regime,
the formation of singularity is much more complicated.
The famed mass quantization conjecture,
posed by Bourgain \cite{Bou00} and Merle-Rapha\"el \cite{MR05.2},
predicts that
blow-up solutions to $L^2$-critical NLS
decompose into a singular part and
an $L^2$ residual,
and the singular part expands asymptotically as
multiple bubbles, each of which concentrates a mass of
no less than $\|Q\|_{L^2}^2$ at the blow-up point.
Thus, an important step towards the understanding of
the mass quantization conjecture
is to construct multi-bubble blow-up solutions.

One typical example is the pure multi-bubble blow-up solutions without an $L^2$ residual
initially constructed by Merle \cite{M90}.
Another typical blow-up solution is the Bourgain-Wang blow-up solution \cite{BW97},
including a blow-up bubble and an residual.
Very recently,
multi-bubble Bourgain-Wang type blow-up solutions,
predicted by the mass quantization conjecture,
have been constructed by R\"ockner and the second and third authors \cite{RSZ21}.
This class of solutions also relates closely to non-pure multi-solitons (including dispersive part)
as predicted by the soliton resolution conjecture, see \cite{RSZ21}.
We also refer to \cite{CSZ21,CF21} for the uniqueness of multi-bubble
blow-ups and multi-solitons.

In contrast to the extensive studies of multi-bubble solutions
to NLS, to the best of our knowledge,
there seem few examples of multi-bubble blow-up solutions to
the half-wave equation \eqref{equa} in the large mass regime.

One significant challenge arises from the strong interaction between  different bubbles,
owing to the very slow algebraic decay property of the corresponding ground state.
The decay rate indeed reaches the lowest order in dimension one,
as indicated in \eqref{Decay} and \eqref{Decay-s} below.
This is in stark contrast to the rapid exponential decay rate in the NLS case (see \cite[Lemma 6.1]{S-Z}),
as well as the two and three dimensional cases in \cite{G-L-2,G-L-3}.

An additional obstruction, especially in the nonlocal multi-bubble setting, is
the nonlocal effect on the localization functions introduced in the multi-bubble construction in \cite{CSZ21,RSZ21,S-Z}.
In the context of NLS,
the localization functions cooperate well with the local derivative operators,
and exhibit favorable algebraic identities
and fast decay orders.
These properties are important when deriving the key monotonicity property of the generalized energy functional.
However, because of the non-local effect of the half-wave operator,
one has to control extra errors
particularly arising from the commutator estimates and
the large spatial regime,
which indeed require sharp estimates for obtaining
required decay orders essential to close bootstrap estimates.

The aim of this paper is to study the multi-bubble phenomena for the half-wave equation \eqref{equa} in the nonlocal setting.
We construct multi-bubble blow-up solutions
concentrating exactly at any given distinct singularities,
each bubble behaves asymptotically like the self-similar critical mass solutions constructed in \cite{K-L-R}.
To the best of our knowledge,
this provides the first examples of multi-bubble solutions to \eqref{equa}
in the large mass regime.
In particular,
the constructed solutions exhibit the mass quantization phenomena,
concentrating exactly the ground state mass at each singularity.

Our proof draws upon the modulation method in \cite{K-L-R}
for the construction of single-bubble blow-up solutions in the nonlocal setting,
and explores the localization techniques in \cite{CSZ21,RSZ21,S-Z}
to construct multi-bubble solutions to NLS in the large mass regime.
Much efforts has been dedicated to overcoming
the aforementioned obstructions.
In order to exploit sufficient decay orders and get sharp estimates,
we utilize the Carlder\'on estimate \cite{C65}
and the integration representation formula of fractional operators.
Quite delicately,
we find that there exists a narrow room
between the orders $|t|^{2+}$ and $|t|^{3-}$
for the  remainders in the geometrical decomposition.
This allows to develop a novel bootstrap scheme
tailored to address the
present multi-bubble non-local situation.

\subsection{Existing results}
It is well known that the half wave equation \eqref{equa} is locally well-posed in the energy space $H^\half(\R)$, see e.g. \cite{K-L-R}.
For every initial datum $u_0\in H^\half(\R)$,
there exists a unique solution $u\in C([t_0,T);H^\half(\R))$ to \eqref{equa}
with $t_0<T\leq +\9$ being the maximal forward time of existence,
and the following blow-up alternative property holds:
\ben
T < +\infty\quad implies\quad
\lim_{t\rightarrow T^-}\|u(t)\|_{H^\frac{1}{2}}=\infty.
\enn
Moreover, for $s > \half$, the $H^s$ regularity on the initial data can be propagated by the flow.

Equation \eqref{equa} also admits a number of symmetries and conservation laws.
It is invariant under the translation, scaling, phase rotation,
i.e.,
if $u$ solves (\ref{equa}),
then so is
\begin{align*}
v(t,x)=\lambda_0^{\frac{1}{2}}
u (\lambda_0 t+t_0,\lbb_0 x+x_0)
e^{i\g_0},
\end{align*}
with $v(0,x)=\lambda_0^{\frac{1}{2}} u_0 (\lbb_0 x+x_0) e^{i\g_0}$,
where
$(\lambda_0, t_0, x_0, \g_0) \in \bbr^+ \times \bbr \times \bbr\times \bbr$.
In particular,
the $L^2$-norm of solutions is preserved under the scaling symmetry,
and thus
\eqref{equa} is  called the {\it $L^2$-critical} equation.
The conservation laws related to \eqref{equa} contain
\begin{align*}
   & {\rm Mass}:\ \ M(u)(t):=\int_{\R}|u(t,x)|^2dx=M(u_0).    \\
   & {\rm Energy}:\ \ E(u)(t) := \half\int_{\R}|D^\half u(t,x)|^2dx-\frac{1}{4}\int_{\R}|u(t,x)|^{4}dx=E(u_0).  \\
   & {\rm Momentum}: \ \ P(u)(t):={\rm Im} \int_{\R}\nabla u(t,x)\bar{u}(t,x)dx
       =  P(u_0).
\end{align*}

One important role in the description of the long time dynamics of solutions to \eqref{equa}
is played by the \emph{ground state} $Q$, which  is the unique positive even solution to
\be\label{Ground}
DQ+Q-Q^3=0,\quad Q\in H^\half(\R).
\ee
The existence of $Q$ follows from standard variational arguments.
However, the uniqueness of $Q$ is an intricate problem  since  shooting arguments and ODE techniques are not applicable to nonlocal operators,
see \cite{F-L, F-L-S}.
An intriguing feature is
that the ground state only exhibits a slow algebraic decay (\cite{F-L, KMR}), i.e.,
\be\label{Decay}
Q(x)\sim\frac{1}{1+|x|^2}.
\ee
Moreover, by the approach of Weinstein \cite{W83},
the ground state is related to the best constant of a sharp Gagliardo-Nirenberg type inequality
\ben
\|f\|_{L^4}^4\leq \frac{\|f\|_{L^2}^2\|D^\half f\|_{L^2}^2}{2\|Q\|_{L^2}^2},\quad \forall f\in H^\half(\R).
\enn
Thus, a standard argument shows that, if the initial datum has a \emph{subcritical mass}, i.e., $\|u_0\|^2_{L^2}<\|Q\|^2_{L^2}$, then the corresponding solution exists globally in time.
However, unlike the NLS scattering below the ground state (\cite{D15}),
the half-wave equation has non-scattering traveling waves with even arbitrarily small mass, see e.g. \cite{BGLV, K-L-R}.

When the initial datum has a \emph{critical mass}, i.e., $\|u_0\|^2_{L^2}=\|Q\|^2_{L^2}$, the corresponding solution may form singularities in finite time.
However, differently from the $L^2$-critical NLS, the existence of finite time blow-up solutions
with minimal mass is a non-trivial result,
due to the absence of the pseudo-conformal invariance for \eqref{equa}.
The construction of critical mass blow-up solutions to \eqref{equa}
was initialed by Krieger, Lenzemann and Rapha\"{e}l \cite{K-L-R},
in which the authors developed a robust dynamical approach
for the construction of critical mass blow-up elements in the setting of nonlocal dispersion.
Recently, the construction of critical mass blow-up solutions
has been generalized by  Georgiev and Li \cite{G-L-2,G-L-3} to dimensions two and three,
with an additional radial assumption.
Note that, these minimal mass blow-up solutions admit the  blow-up speed
$\|D^\half u(t)\|_{L^2}\sim (T-t)^{-1}$ as $t\rightarrow T^-$.

When the mass of initial data is slightly above the critical mass,
a different blow-up scenario has been observed by Lan \cite{Lan}
for the general $L^2$-critical fractional Schr\"odinger equation
\begin{align}  \label{equa-gen}
 &i\partial_tu=(-\Delta)^{s} u-|u|^{\frac{4s}{d}}u=0,  \quad (t,x)\in \R\times\R^d, \; s\in[\half, 1).
\end{align}
See, e.g., \cite{BGV,CP,FGO,GG,GLPR,GH,HW,HS} and references therein
for the relevant well-poseness results.
For $s$ close to 1, Lan \cite{Lan} constructed blow-up solutions
with the rate $\|D^{s} u(t)\|_{L^2}\leq C \sqrt{\frac{|log(T-t)|^{\frac18}}{T-t}}$
as $t\rightarrow T^-$,
where $T$ is the blow-up time.
Unlike the critical mass blow-up solutions,
this class of solutions is stable with respect to the perturbation of initial data.
In some sense,  they resemble the stable {\it log-log} blow-up solutions
to the $L^2$-critical NLS,
which have been extensively studied by Merle and Rapha\"{e}l \cite{MR03,MR05,MR06}.
However, for the half-wave equation \eqref{equa},
it remains unknown whether this class of blow-up solutions exist.
See also \cite{BHL} and \cite{D19}
for the general blow-up criteria for radial and non-radial initial data, respectively.

In the large mass regime,
multi-bubble blow-up solutions have attracted significant interest in the literature.
In the context of $L^2$-critical NLS,
we refer to \cite{M90} for the initial construction of multi-bubble solutions,
\cite{F17,FM23+} for the construction of multi-bubble log-log blow-up solutions,
and \cite{MP18} for the multi-bubbles concentrating at the same point.
Very recently, multi-bubble Bourgain-Wang type solutions
predicted by the mass quantization conjecture
have been constructed in \cite{RSZ21}.
These solutions also relate, via the pseudo-conformal transformation,
to non-pure multi-solitons predicted by the soliton resolution conjecture.
Bubbling phenomena also exhibit for stochastic nonlinear Schr\"odinger equations
driven by Wiener process, see \cite{S-Z}.
Additionally, we would like to refer to \cite{CM18,JM20,MRT15,S-Z} for multi-bubble solutions
in the context of various other models.

\subsection{Main result}
The main result of this paper is formulated in Theorem \ref{thm-main} below,
concerning the multi-bubble blow-up solutions to the half-wave equation \eqref{equa}.

\begin{theorem} [Multi-bubble blow-up solutions]  \label{thm-main}
Let $\{x_k\}_{k=1}^K$ be arbitrary K distinct points in $\R$, $K\in \mathbb{N}^+$,
$\{\t_k\}_{k=1}^K\subseteq\R^K$and $\omega>0$.
Then, there exist $t_0<0$ and a solution $u\in C([t_0,0);H^{\half}(\R))$ to \eqref{equa}
which blows up at time $T=0$ and has the asymptotic behavior
\ben
u(t,x)-\sum_{k=1}^{K}\frac{1}{\lbb_k^{\half}(t)}Q\left(\frac{x-\al_k(t)}{\lbb_k(t)}\right)
e^{i\g_k(t)}\rightarrow 0\quad in \quad L^2(\R), \quad as \quad t\rightarrow 0^-,
\enn
where the parameters satisfy
\ben
\lbb_k(t)=\omega t^2+\calo(|t|^{4-2\delta}),\quad \al_k(t)=x_k+\calo(|t|^{3-\delta}),\quad \g_k(t)=\frac{1}{\omega|t|}+\t_k+\calo(|t|^{1-2\delta}), \quad 1\leq k\leq K.
\enn
for $\delta$ being any small positive number.
\end{theorem}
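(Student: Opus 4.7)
The plan is to construct the desired solution by a backward compactness argument. Fix a sequence $t_n \uparrow 0^-$ and solve \eqref{equa} on $[t_0, t_n]$ backwards from a terminal datum $u_n(t_n) = \sum_{k=1}^K S_k(t_n)$, where each $S_k$ is a refined approximate profile centered at $x_k$ consistent with the predicted asymptotics $\lbb_k \sim \om t^2$, $\al_k \to x_k$, $\g_k \sim 1/(\om|t|) + \t_k$. The goal is to establish uniform-in-$n$ a priori bounds on the modulation parameters and on the remainder $R_n := u_n - \sum_k S_k$ on a fixed interval $[t_0, 0)$; $L^2$-compactness then yields the desired solution as $n \to \9$.

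First I would build the refined profiles $S_k$ following the single-bubble construction of \cite{K-L-R}: write
\ben
S_k(t,x) = \frac{1}{\lbb_k^{\half}(t)}\, P_k\!\lf(\frac{x - \al_k(t)}{\lbb_k(t)}\rt) e^{i\g_k(t)},
\enn
where $P_k = Q + b_k T_1 + b_k^2 T_2 + \cdots$ is an asymptotic expansion in a small self-similar parameter $b_k$ related to $\dot\lbb_k$, with each $T_j$ solving an inhomogeneous problem for the linearized operator around $Q$. Such profiles produce a single-bubble error of arbitrarily high order in $|t|$. Second, I would impose orthogonality conditions on $R_n$ against a finite family of directions $\phi_k Z_{k,j}$, where $Z_{k,j}$ are the rescaled modulation modes associated with the $k$-th bubble (scaling, translation, phase) and $\phi_k$ is a smooth cutoff localizing near $x_k$, as in \cite{CSZ21,RSZ21,S-Z}. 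Differentiating these orthogonalities in time yields a closed ODE system for the $3K$ parameters whose leading order recovers the target asymptotics and whose error terms are controlled by $\|R_n\|$.

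The heart of the argument is a bootstrap on $R_n$ in an appropriate weighted $H^{\half}$-type norm, targeting the narrow window $|t|^{2+\delta} \lesssim \|R_n(t)\|_{H^{\half}} \lesssim |t|^{3-\delta}$. The lower bound is forced by the inter-bubble interaction: the contribution near $x_k$ from the $j$-th bubble, with $|x_j - x_k| \sim 1$ and scale $\lbb_j \sim t^2$, produces a source of size $\sim |t|^{3}$ due to the algebraic decay \eqref{Decay}; the upper bound must beat the nonlocal errors generated in the equation for $R_n$. I would close it via a localized generalized energy-virial functional $\mathcal{F}(t)$, built as a weighted sum of local energies and Morawetz-type corrections around each $x_k$, whose time derivative is controlled using the coercivity of the linearized operator modulo the orthogonality conditions, combined with the conservation laws of \eqref{equa}.

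The main obstacle is the clash between the nonlocal operator $D$ and the localization $\phi_k$: the commutator $[D,\phi_k]$ is not supported near $x_k$ and decays only algebraically, producing cross-bubble errors of the same order as the genuine interaction. To extract sharp decay I would combine the Calder\'on commutator estimate \cite{C65} with the pointwise representation
\ben
Df(x) = c\, \mathrm{p.v.} \int_{\R} \frac{f(x)-f(y)}{(x-y)^2}\, dy,
\enn
splitting the $y$-integration into a near-field piece, handled by derivatives of $\phi_k$, and a far-field piece, handled by the algebraic decay of $Q$ evaluated at scale $|x_j - x_k|/\lbb_k$. The 1D decay \eqref{Decay} is exactly sharp enough to render the window $(|t|^{2+\delta}, |t|^{3-\delta})$ nonempty; any slower decay would invert it, which is why the one-dimensional half-wave case is genuinely more delicate than the NLS case or the higher-dimensional situations in \cite{G-L-2,G-L-3}. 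A multi-step bootstrap, in which the parameter bounds $|\lbb_k - \om t^2| \lesssim |t|^{4-2\delta}$, $|\al_k - x_k| \lesssim |t|^{3-\delta}$ and $|\g_k - 1/(\om|t|) - \t_k| \lesssim |t|^{1-2\delta}$ are improved in tandem with $\|R_n\|_{H^{\half}}$, is then run inside this narrow window to close all estimates uniformly in $n$, after which the limit $n \to \9$ provides the desired multi-bubble blow-up solution and the $L^2$ convergence claimed in the theorem.
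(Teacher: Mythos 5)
Your roadmap --- backward compactness from terminal data $\sum_k S_k$, refined profiles built as inhomogeneous corrections to $Q$, orthogonality conditions in a geometrical decomposition, a bootstrap pinned in a narrow window of decay exponents, a localized generalized energy--virial functional, and the use of Calder\'on's commutator estimate together with the singular-integral representation of $D$ --- coincides with the paper's strategy, and you correctly identify why dimension one is the hardest case for the half-wave equation. There are, however, two gaps that would prevent the bootstrap from closing.

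First, you carry only $3K$ modulation parameters (scaling, translation, phase) with $3K$ orthogonality conditions, treating $b_k$ as slaved to $\dot\lbb_k$ and omitting the drift $v_k$ entirely. For the half-wave equation this is not enough: there is no Galilean invariance and no pseudo-conformal invariance, so $b_k$ and $v_k$ are independent dynamical variables, and the refined profile of \cite{K-L-R} depends on both, $Q_k = Q + ib_kS_1 + iv_kG_1 + b_kv_kG_2 + b_k^2 S_2 + b_k^3 S_3$, whereas your $P_k$ depends only on $b_k$. The paper's decomposition carries $5K$ parameters $(\lbb_k,b_k,v_k,\a_k,\g_k)$ with $5K$ orthogonality conditions. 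Without $v_k$, the translation dynamics $\dot\a_k\approx v_k$ are uncontrolled, and the profile equation leaks an unremovable $\calo(v_k)=\calo(t^2)$ source, already too large to reach $\|R\|_{L^2}\lesssim |t|^{3-\delta}$.

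Second, the generalized null space of the linearized operator around $Q$ is six-dimensional per bubble, one dimension beyond what the orthogonality conditions remove; the residual direction is $\<Q_k,R\>$. In the single-bubble construction it is handled by global $L^2$ conservation, which gives $|\<Q,\epsilon\>|\lesssim\|\epsilon\|_{L^2}^2$ for free. In the multi-bubble setting the localized masses $\int|u|^2\Phi_k$ are not conserved, precisely because $D$ is nonlocal and transports mass across bubbles. The paper therefore runs a separate argument (Lemma~\ref{lem-mass-local}): it controls $\frac{d}{dt}\int|u|^2\Phi_k$ via the integral kernel of $D$, a crucial cancellation of the leading $DQ_k\approx -Q_k$ contribution in the cross terms, and Calder\'on's estimate for $D(\ol R\Phi_k)-D\ol R\,\Phi_k$, obtaining $|{\rm Re}\<U_k,R\>|=\calo(|t|^{4-\delta})$. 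A companion estimate on the localized momentum ${\rm Im}\int\na u\,\ol u\,\Phi_k$ (Lemma~\ref{lem-lcomom}) then produces the refined bound $|v_k/\lbb_k-1|\lesssim |t|^{2-\delta}$. Both of these feed into the coercivity of the linearized operator and into the modulation equation bound \eqref{Mod-bdd}; without them the narrow window you describe cannot be sustained down to a uniform $t_0$. Your proposal mentions ``conservation laws'' only in passing inside the energy functional and gives no substitute for these localized computations. (A small calibration point: the window $|t|^{2+}$ to $|t|^{3-}$ is for $\|R\|_{L^2}$, not $\|R\|_{H^{1/2}}$; the latter is the larger order $|t|^{2-\delta}$.)
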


As a consequence,
we have the mass quantization property for the constructed solutions to \eqref{equa}.

\begin{corollary} (Mass quantization)
Let $u$ be the solution to \eqref{equa} constructed in Theorem \ref{thm-main}.
Then,
we have that for $t\to 0^-$,
\begin{align*}
        |u(t)|^2 \rightharpoonup \sum\limits_{k=1}^K \|Q\|_{L^2}^2 \delta_{x=x_k}
\end{align*}
and
\begin{align*}
        u(t) \to 0\ \ in\ L^2(\bbr - \bigcup\limits_{k=1}^K B(x_k, r)),
\end{align*}
where $B(x_k, r)$ denotes the open interval centered at $x_k$ with
any given small radius $r>0$, $1\leq k\leq K$.
In particular,
\ben
\|u\|_{L^2}^2=K\|Q\|_{L^2}^2.
\enn
\end{corollary}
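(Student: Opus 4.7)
The plan is to derive the three claims directly from the $L^2$ decomposition furnished by Theorem \ref{thm-main}, exploiting the $L^2$-critical scaling invariance of $Q$ and the asymptotic $L^2$-orthogonality of bubbles concentrating at distinct points. Write $R_k(t,x):=\lambda_k^{-1/2}(t)Q((x-\alpha_k(t))/\lambda_k(t))e^{i\gamma_k(t)}$ and $R(t):=\sum_{k=1}^K R_k(t)$. Theorem \ref{thm-main} gives $\|u(t)-R(t)\|_{L^2}\to 0$ as $t\to 0^-$, while a change of variables yields $\|R_k(t)\|_{L^2}=\|Q\|_{L^2}$ for every $k$ and every $t$.

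First I would establish the weak convergence to the sum of Diracs. For any test function $\varphi\in C_c(\R)$, the change of variables $y=(x-\alpha_k(t))/\lambda_k(t)$ gives
\[
\int_{\R}|R_k(t,x)|^2\varphi(x)\,dx=\int_{\R}Q^2(y)\,\varphi(\alpha_k(t)+\lambda_k(t)y)\,dy\longrightarrow\|Q\|_{L^2}^2\,\varphi(x_k)
\]
by dominated convergence, using $Q^2\in L^1(\R)$, $\lambda_k(t)\to 0$ and $\alpha_k(t)\to x_k$. The cross terms $\int R_k\bar R_j\varphi\,dx$ with $k\neq j$ are handled using the pointwise bound derived from the algebraic decay \eqref{Decay},
\[
|R_k(t,x)\,R_j(t,x)|\lesssim\frac{\lambda_k^{3/2}}{\lambda_k^2+(x-\alpha_k)^2}\cdot\frac{\lambda_j^{3/2}}{\lambda_j^2+(x-\alpha_j)^2}.
\]
Splitting $\R$ into $\{|x-\alpha_k|\le\tfrac12|\alpha_k-\alpha_j|\}$ and its complement, and using that $|\alpha_k-\alpha_j|\to|x_k-x_j|>0$, each piece is bounded by $\lambda_k^{1/2}\lambda_j^{3/2}+\lambda_k^{3/2}\lambda_j^{1/2}=O(|t|^4)$, which is harmless. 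Combined with $\|u-R\|_{L^2}\to 0$, this yields $\int|u(t)|^2\varphi\,dx\to\sum_{k=1}^K\|Q\|_{L^2}^2\,\varphi(x_k)$.

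Second, for the local $L^2$ vanishing outside the singularities, fix $r>0$ and set $\Omega_r:=\R\setminus\bigcup_{k=1}^K B(x_k,r)$. For $|t|$ small enough, $|\alpha_k(t)-x_k|<r/2$, so on $\Omega_r$ we have $|x-\alpha_k(t)|>r/2$ for each $k$, and
\[
\|R_k(t)\|_{L^2(\Omega_r)}^2\le\int_{|y|>r/(2\lambda_k(t))}Q^2(y)\,dy\longrightarrow 0
\]
as $t\to 0^-$, since $Q^2\in L^1$. The triangle inequality together with $\|u-R\|_{L^2}\to 0$ then gives $\|u(t)\|_{L^2(\Omega_r)}\to 0$. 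For the total mass identity, I would invoke conservation of mass: $\|u(t)\|_{L^2}^2$ is constant on $[t_0,0)$. Expanding the decomposition and applying the cross-term estimate above,
\[
\|u(t)\|_{L^2}^2=\|R(t)\|_{L^2}^2+o(1)=\sum_{k=1}^K\|R_k\|_{L^2}^2+2\operatorname{Re}\sum_{k<j}\langle R_k,R_j\rangle+o(1)=K\|Q\|_{L^2}^2+o(1),
\]
and passing $t\to 0^-$ yields $\|u\|_{L^2}^2=K\|Q\|_{L^2}^2$.

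The only genuinely delicate step is the cross-term estimate, since in dimension one the ground state $Q$ decays only at the slowest admissible algebraic rate $1/(1+|x|^2)$ (cf.\ \eqref{Decay}), precluding the clean exponential orthogonality of bubbles available for NLS. Fortunately, the quantitative information supplied by Theorem \ref{thm-main}, namely $\lambda_k(t)\sim\omega t^2\to 0$ together with $\alpha_k(t)-\alpha_j(t)\to x_k-x_j\neq 0$, already makes the widths much smaller than the separation, and the separation-of-supports argument sketched above comfortably produces the required $O(|t|^4)$ bound.
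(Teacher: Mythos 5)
Your proof is correct and follows essentially the same route the paper takes at the end of Section~\ref{Sec-Exist-Multi}: deduce the claims from the $L^2$-decomposition in Theorem~\ref{thm-main} by rescaling each bubble, decoupling distinct bubbles via the algebraic decay of $Q$ (the paper invokes Lemma~\ref{lem-dep} for this), and invoking mass conservation for the total-mass identity. You have merely written out the change-of-variables and cross-term computations that the paper leaves implicit, and the $O(|t|^4)$ cross-term bound matches the decoupling estimate used there.
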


\begin{remark}
To our best knowledge,
Theorem \ref{thm-main} provides the first examples of multi-bubble solutions
and  the mass quantization phenomenon for the half-wave equation
in the non-local setting.

Moreover,
Theorem \ref{thm-main} mainly treats the case where the singularities are arbitrary,
and the asymptotic frequencies in the scaling parameters $\lbb_k$ are the same.
With slight modifications,
our arguments also apply to the case where the asymptotic frequencies are different,
i.e., $\lbb_k(t)\sim\omega_k t^2$ with $\omega_k$ of small variations as in \cite{RSZ21,SZ20}.
We refer the reader to {\rm Case (II)} in \cite{S-Z}
for detailed arguments.

We would also  expect the method developed in this work
is applicable to the multi-bubble problem for  the generalized problem \eqref{equa-gen}
in high dimensions, possibly with additional effort to deal with
the nonsmoothness of the nonlinearity.
\end{remark}

As mentioned above,  our proof  utilizes the strategy developed in \cite{CSZ21,K-L-R,RSZ21,S-Z}.
Let us present some comments on the differences
between the multi-bubble/single-bubble
and non-local/local cases.

\paragraph{\bf $(i)$ Distinction between multi-bubble and single-bubble cases}
One major difference between two cases lies in the
strong interaction between difference bubbles,
which does not occur in the single-bubble case in \cite{K-L-R}.

As a matter of fact,
different bubbles exhibit the strongest interactions for the half-wave equation.
More precisely,
the bubbling interactions are determined by the decay rate of the ground state,
which solves the
fractional elliptic problem
\ben
(-\Delta)^{s}Q+Q-|Q|^{\frac{4s}{d}}Q=0.
\enn
One has
\be\label{Decay-s}
 Q(x)\sim\frac{1}{1+|x|^{d+2s}}, \ \ |x|\gg 1.
\ee
Note that for the half-wave equation where $s=1/2$,
the decay rate of the ground state reaches the lowest order when $d=1$.
This is different from the high dimension cases in \cite{G-L-2,G-L-3}.

Because of the slow decay of the ground state \eqref{Decay},
the interactions between different bubbles to \eqref{equa}
have the decay rate $\calo(|t|^4)$,
see the decouple estimates in Lemma \ref{lem-dep} below.
This restricts the decay rate of several functionals.
One typical restriction occurs for the localized mass
$Re\<U_k, R\>$,
where $U_k$ is the $k$-th localized blow-up profile given by \eqref{U-Uj} below, $1\leq k\leq K$.
In the present multi-bubble case,
one has
\begin{align*}
   |Re\<U_k, R\>| \lesssim  \int_0^t \|R\|_{L^2} ds + |t|^4.
\end{align*}
While in the single-bubble case,
due to the mass conservation law,
much better decay rate can be gained
\begin{align*}
   |Re\<U_k, R\>| \lesssim \|R\|^2 _{L^2}.
\end{align*}

The strong interaction effects can be seen more clearly
from the generalized energy  $\mathfrak{I}$ defined in \eqref{def-I} below.
On one hand,
in the derivation of the corresponding monotonicity property,
one roughly has
\begin{align*}
   \frac{d \mathfrak{I}}{dt} \geq  C\frac{\|R(t)\|_{L^2}^2}{|t|^{3}}
   +\calo\( \frac{\|R(t)\|_{L^2}^2}{|t|^{3-\delta}} + |t|^{3-\delta}
   + \frac{\int_t^0 \|R(s)\|_{L^2} ds}{|t|^3}\|R(t)\|_{L^2}\),
\end{align*}
where $\delta>0$ is a very small constant.
Thus, in order to preserve the main order
one needs $\|R(t)\|_{L^2}=\calo(t^2)$. Moreover, in order to close the following  bootstrap estimate for $\g_k(t)$, $1\leq k\leq K$,
\ben
 |\g_{k}(t) + \frac{4}{\omega^2 t} - \theta_k|
 \lesssim\int_{t}^{0} \frac{Mod(s)}{s^4}ds
 \lesssim\int_{t}^{0}s^{-4}\int_{s}^{0}\|R(\tau)\|_{L^2}d\tau ds,
\enn
one needs the upper bound of the remainder $\|R(t)\|_{L^2} \lesssim |t|^{2+}$.
On the other hand,
in order to close the following  bootstrap estimate for $\|R(t)\|_{L^2}$
\begin{align*}
    \frac{\|R\|_{L^2}^2}{|t|^2}
   \lesssim \mathfrak{I}(t)
   \lesssim \int_t^0 |s|^{3-\delta} ds \lesssim |t|^{4-\delta},
\end{align*}
one  has the lower bound of the remainder $\|R(t)\|_{L^2} \gtrsim |t|^{3-}$. Thus, the above heuristical arguments lead to
\begin{align*}
  |t|^{3-}  \lesssim \|R\|_{L^2} \lesssim |t|^{2+}.
\end{align*}
This narrow gap between orders $\calo(|t|^{2+})$ and $\calo(|t|^{3-})$
provides opportunity to develop a new bootstrap scheme
for the present multi-bubble non-local situation.

It might be also worth noting that
our modified blow-up profile $Q_k$ has the expansion
\ben
-\frac{i}{2}b_k^2\partial_{b_k}Q_k-ib_kv_k\partial_{v_k}Q_k+ib_k\Lambda Q_k-iv_k Q^\prime_k-DQ_k-Q_k+|Q_k|^2Q_k=\calo(|t|^4),
\ \ 1\leq k\leq K.
\enn
It is different from the NLS case,
where the modification procedure is unnecessary
owing to the rapid exponential decay rate of the ground state.
Moreover,
though the $\calo(|t|^4)$ decay rate is weaker than the $\calo(|t|^5)$ rate of the more refined profile
constructed in the single-bubble case \cite{K-L-R},
it is sufficient to close the bootstrap arguments,
as the main order is determined by the bubble interactions.

\paragraph{\bf $(ii)$ Distinction between non-local and local cases}
In order to address the above-mentioned heuristic arguments,
a main challenge is to deal with the non-local effect of the half-wave operator.

Actually, when constructing bubbling solutions to NLS,
we make use of the localization functions in \cite{S-Z}.
These functions permit to construct the previously mentioned localized mass and the generalized energy
adapted to the multi-bubble case.
They also cooperate well with the local derivative operators,
enabling several algebraic cancellations
and fast decay orders for deriving the key monotonicity property of the generalized energy.

However, in the current non-local context,
it is rather difficult to decouple different localization functions.
Moreover, new errors arise from the commutator estimates and in the large spatial regime.
In order to exploit sufficient decay orders,
we apply the integration presentation formula of the nonlocal operator to get sharp pointwise estimates,
and the Calder\'on estimate to control the remainder
in the geometrical decomposition.
These constitute the main technical parts of the present work.
The most delicate part of the proof lies in the derivation of the monotonicity of generalized energy.
Careful treatment has been performed  for the nonlocal operator and localization functions,
in order to decouple the generalized energy into different localized parts with sufficiently high-order errors.
\medskip

\paragraph{\bf Organization of paper}
The rest of this paper is organized as follows.
In Section \ref{Sec-Mod} we derive the geometrical decomposition of solutions
and the estimates of the corresponding modulation equations.
Then, Sections \ref{Sec-Boot}-\ref{Sec-Proof-Boot}
are dedicated to the proof of the crucial bootstrap estimates
of modulation parameters and remainder.
In particular, in Section \ref{Sec-Gen-Energy},
we construct the generalized energy functional adapted to multi-bubble case,
and prove the monotonicity and coercivity controls,
which are the key ingredients towards the derivation of bootstrap estimates.
In Section \ref{Sec-Exist-Multi}, we prove the main results in Theorem \ref{thm-main}.
At last,  the Appendix contains a collection of the preliminary lemmas used in the proof,
including the properties of fractional Laplacian operators
and the linearized operator around the ground state.
\medskip

\paragraph{\bf Notations}
The Fourier transform of a function $f$ is denoted by $\mathcal{F}(f)=\hat{f}$.
The fractional Laplacian operator $D^s=(-\Delta)^{\frac{s}{2}}$
for $s\geq 0$ can be defined via the Fourier transform by
\be\label{def-ds}
\widehat{D^sf}(\xi)=|\xi|^s\hat{f}(\xi).
\ee

We use  $\dot{H}^{s}(\bbr)$ and $H^{s}(\bbr)$
for the homogeneous and nonhomogeneous Sobolev spaces,
respectively, for  $s\in \bbr$.
Let $L^p(\R)$ denote
the space of $p$-integrable complex-valued functions,
$\<v,w\>:=\int_{\bbr} v(x) \ol w(x)dx$ the inner product of the Hilbert space
$L^2(\R)$,
and $\mathscr{S}(\R)$ the Schwartz space.
We use the Japanese bracket $\<x\>:=(1+|x|^2)^{1/2}$. We also denote $\Lambda f(x):= \frac{1}{2}f + xf^\prime$ as the $L^2$-critical scaling operator.

In the following, we sometimes use the multi-variable calculus notation
such as $\nabla f = f^\prime$ and $\Delta f = f^{\prime\prime}$
for functions $f : \R \rightarrow \R$ to improve the readability of certain formulae derived
below.

We use  $f=\calo(g)$ to denote $|f|\leq C|g|$,
where the positive constant $C$  is allowed to depend on universally fixed quantities only.
Throughout the paper, the positive constant $C$ may change from line to line.

\section{Geometrical decomposition and modulation equations}    \label{Sec-Mod}
In this section, we first construct the approximate blow-up profiles.
Then, we derive the geometrical decomposition
and the corresponding estimates for the modulation equations of geometrical parameters.
This is the starting point to carry out the modulation analysis.

\subsection{Approximate blow-up profiles}
Unlike the NLS,
where the pseudo-conformal invariance permits a direct derivation of
an explicit blow-up solution from the ground state,
such a direct approach is unavailable for equation \eqref{equa}.
In \cite{K-L-R}, a high-order approximate blow-up profile $Q_{\mathcal{P}}$ has been constructed
as a substitute for $Q$.
In this subsection, we introduce the approximate blow-up profiles
in the  multi-bubble context,
which serve as the fundamental building blocks for constructing the approximate bubbling solutions.
For convenience,
we identify a complex-valued function $f: \R \rightarrow \C$ with the vector valued function $\mathbf{f}: \R\rightarrow\R^2$ as
\ben
\mathbf{f}=
\left[\begin{array}{c}
  {\rm Re}f \\
  {\rm Im}f
\end{array}\right].
\enn

By applying this notation, we have the existence of approximate blow-up profiles $\textbf{Q}_k=[{\rm Re}Q_k, {\rm Im}Q_k]^T$ around the ground state $\textbf{Q}=[Q,0]^T$ in the following lemma.

\begin{lemma}[Approximate blow-up profiles]\label{lem-app}
For every $1\leq k\leq K$,
for $b_k$ and $v_k$ sufficiently small,
there exists a smooth function $Q_k$ of the vector form
\begin{align}   \label{Qk-approx}
\textbf{Q}_k=\textbf{Q}+b_k\textbf{R}_{1,0}+v_k\textbf{R}_{0,1}+b_kv_k\textbf{R}_{1,1}
+b_k^2\textbf{R}_{2,0}+b_k^3\textbf{R}_{3,0},
\end{align}
such that the following holds:
$Q_k$ solves the equation
\be\label{eq-q}
-\frac{i}{2}b_k^2\partial_{b_k}Q_k-ib_kv_k\partial_{v_k}Q_k+ib_k\Lambda Q_k-iv_k Q^\prime_k-DQ_k-Q_k+|Q_k|^2Q_k=-\Psi_k,
\ee
with the remainder term $\Psi_k$ satisfying that for $m\in\N$ and $\nu=0,1,2$,
\be\label{est-psi}
\|\Psi_k\|_{H^m}\leq C(m)(b_k^4+v_k^2), \quad|\nabla^{\nu}\Psi_k(x)|\leq C(b_k^4+v_k^2)\langle x\rangle^{-2}, \quad  x\in \R .
\ee
Moreover, the vector functions $\{\textbf{R}_{i,j}\}_{1\leq i\leq 3, 0\leq j\leq 1}$ have the symmetry structure
\ben
\mathbf{R_{1,0}}=
\left[\begin{array}{c}
  0 \\
  \mathrm{even}
\end{array}\right], \; \mathbf{R_{0,1}}=
\left[\begin{array}{c}
  0 \\
  \mathrm{odd}
\end{array}\right],\; \mathbf{R_{1,1}}=
\left[\begin{array}{c}
  \mathrm{odd} \\
  0
\end{array}\right],\; \mathbf{R_{2,0}}=
\left[\begin{array}{c}
  \mathrm{even} \\
  0
\end{array}\right],\; \mathbf{R_{3,0}}=
\left[\begin{array}{c}
  0 \\
  \mathrm{even}
\end{array}\right]
\enn
and the following regularity and decay bounds
\ben\ba
& \|\textbf{R}_{i,j}\|_{H^m}+\|\Lambda\textbf{R}_{i,j}\|_{H^m}+\|\Lambda^2\textbf{R}_{i,j}\|_{H^m}\leq C(m),\quad  m\in \N,\\
&|\textbf{R}_{i,j}|+|\Lambda \textbf{R}_{i,j}|+|\Lambda^2 \textbf{R}_{i,j}|\leq C\langle x\rangle^{-2},\quad  x\in \R.
\ea\enn

The proof of Lemma \ref{lem-app} is similar  to that of Proposition 4.1 in \cite{K-L-R},
thus it is omitted here for simplicity.

\end{lemma}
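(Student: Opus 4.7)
The plan is to determine the correction profiles $\mathbf{R}_{i,j}$ one by one by matching coefficients of $b_k^i v_k^j$ in \eqref{eq-q} after substituting the ansatz \eqref{Qk-approx}, using the natural scaling weights $\mathrm{deg}(b_k)=1$, $\mathrm{deg}(v_k)=2$ (consistent with the anticipated $b_k\sim|t|$, $v_k\sim|t|^2$ from Theorem \ref{thm-main}), and then collecting in $\Psi_k$ all leftover terms of total weight at least $4$, which are exactly of size $\calo(b_k^4+v_k^2)$. Upon linearizing the cubic half-wave stationary operator at $\mathbf{Q}=[Q,0]^T$, the real and imaginary diagonal blocks are the scalar operators $L_+=D+1-3Q^2$ and $L_-=D+1-Q^2$; the appendix records $L_-Q=0$, $L_+Q'=0$, the $L^2$-critical identity $L_+(\Lambda Q)=-Q$, and the orthogonality $\langle\Lambda Q,Q\rangle=0$.

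At order $b_k$ the term $ib_k\Lambda Q$ in \eqref{eq-q} produces $L_-(\mathrm{Im}\,\mathbf{R}_{1,0})=-\Lambda Q$, solvable modulo $\ker L_-=\mathrm{span}\{Q\}$ because $\langle\Lambda Q,Q\rangle=0$, and even parity of $\Lambda Q$ forces $\mathbf{R}_{1,0}=[0,\mathrm{even}]^T$. At order $v_k$ the analogous equation $L_-(\mathrm{Im}\,\mathbf{R}_{0,1})=-Q'$ is solvable since $Q'$ is odd (hence $\perp Q$), and yields $\mathbf{R}_{0,1}=[0,\mathrm{odd}]^T$. The orders $b_kv_k$, $b_k^2$ and $b_k^3$ are then treated successively: the new sources are produced by $\Lambda$ and $\partial_x$ acting on previously constructed profiles, by the $\partial_{b_k}$ and $\partial_{v_k}$ derivatives in \eqref{eq-q}, and by Taylor-expanding $|Q_k|^2Q_k$ around $Q$. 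Parity considerations force exactly the block structure asserted for $\mathbf{R}_{1,1}$, $\mathbf{R}_{2,0}$, $\mathbf{R}_{3,0}$, and the solvability conditions against the one-dimensional cokernels of $L_\pm$ are satisfied either automatically by parity mismatch (odd source versus even cokernel, or the reverse) or again by the single identity $\langle\Lambda Q,Q\rangle=0$.

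The remainder $\Psi_k$ is then the sum of all uncancelled contributions of total weight $\geq 4$: namely the $b_k^4$, $v_k^2$, $b_k^2v_k$, $b_kv_k^2$ monomials produced by $\Lambda\mathbf{R}_{3,0}$, $\partial_x\mathbf{R}_{\cdot,1}$, the first two terms of \eqref{eq-q} applied to the included profiles, and the cubic Taylor remainder of the nonlinearity. Each of these is a polynomial in $Q$ and in the $\mathbf{R}_{i,j}$ with coefficients of decay $\langle x\rangle^{-2}$, so both the pointwise estimate $|\nabla^\nu\Psi_k(x)|\lesssim (b_k^4+v_k^2)\langle x\rangle^{-2}$ and the Sobolev estimate $\|\Psi_k\|_{H^m}\leq C(m)(b_k^4+v_k^2)$ claimed in \eqref{est-psi} follow once the decay and regularity of each $\mathbf{R}_{i,j}$ has been established.

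The main obstacle is precisely this decay analysis. Unlike the NLS analogue, where the exponential decay of $Q$ transfers directly to all profiles by ODE-type arguments, here $Q\sim\langle x\rangle^{-2}$ and the nonlocal operators $L_\pm$ must be inverted on weighted function spaces. I would invoke the integral representation formula for $D=(-\Delta)^{1/2}$ and the resolvent estimates for the linearized fractional Schr\"odinger operator collected in the appendix (ultimately relying on \cite{F-L,F-L-S}) to show that $L_\pm^{-1}$ applied to a source of decay $\langle x\rangle^{-2}$ returns an output of the same decay rate, modulo the explicit kernel projections already handled in the solvability step. This is the only point at which the nonlocality of $D$ genuinely alters the standard profile-construction scheme, and it is what forces the specific polynomial rate $\langle x\rangle^{-2}$ in \eqref{est-psi} rather than the exponential rates one would get in the NLS setting.
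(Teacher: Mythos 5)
Your proposal follows the same degree-weighted iterative scheme that the paper implicitly invokes (Proposition 4.1 of Krieger--Lenzmann--Rapha\"el): substitute the ansatz, match coefficients with $\deg b_k=1$, $\deg v_k=2$, solve the resulting $L_\pm$ equations order by order subject to parity and Fredholm compatibility, and invert $L_\pm$ on weighted spaces to propagate the $\langle x\rangle^{-2}$ decay of $Q$ to the correction profiles --- so the strategy is the right one. Two small numerical slips to correct against the paper's conventions: \eqref{eq-s1} reads $L_-S_1=+\Lambda Q$ (you wrote $-\Lambda Q$), and the critical scaling identity recorded in \eqref{Q-kernel} is $L_+\Lambda Q=-2Q$ rather than $-Q$; neither affects the structure of the argument.
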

\begin{remark}\label{rmk-1}
The parameters $b_k$ and $v_k$ are assumed to be small.
In fact, as we shall see in the bootstrap estimates in Section 3,
the parameters $b_k, v_k$ depend on $t$
and satisfy the a prior bounds that
$b_k(t)\sim |t|$ and $v_k(t)\sim b_k^2\sim t^2$,
which are sufficiently small for $t$ close to $0$.

Let us mention the difference between the approximate profile $Q_k$ in \eqref{eq-q}
with the one constructed in   \cite[Proposition 4.1]{K-L-R}.
In our case, we expand $Q_k$ to the order $\calo(b_k^3)$,
so that it solves equation \eqref{eq-q} with the remainder $\Psi_k$ of the order  $\calo(b_k^4)$.
In \cite{K-L-R}, the approximate profile $Q_\mathcal{P}$ is expanded to a higher order to solve equation \eqref{eq-q}
with the remainder of the order $\calo(b_k^5)$.
The reason that the current expansion \eqref{Qk-approx} is sufficient for our construction is
that the strong interaction between different bubbles gives rise to the dominant errors of order $\calo(t^4)$.

\end{remark}
\begin{remark}
As we can see from \cite{K-L-R}, the choice of the functions $\{\textbf{R}_{i,j}\}$ can be
\ben
\mathbf{R_{1,0}}=
\left[\begin{array}{c}
  0 \\
  S_1
\end{array}\right], \; \mathbf{R_{0,1}}=
\left[\begin{array}{c}
  0 \\
  G_1
\end{array}\right],\; \mathbf{R_{1,1}}=
\left[\begin{array}{c}
  G_2 \\
  0
\end{array}\right],\; \mathbf{R_{2,0}}=
\left[\begin{array}{c}
  S_2 \\
  0
\end{array}\right],\; \mathbf{R_{3,0}}=
\left[\begin{array}{c}
  0 \\
  S_3
\end{array}\right],
\enn
with $G_1$ and $S_1$ defined as in \eqref{eq-g1} and \eqref{eq-s1}, respectively,
$G_2$ being the unique odd  solution to
\ben
L_+G_2=G_1-\La G_1+\nabla S_1+2S_1G_1Q,
\enn
$S_2$ being the unique even solution to
\ben
L_+S_2=\half S_1-\Lambda S_1+S_1^2Q,
\enn
and $S_3$ being the unique odd solution to
\ben
L_-S_3=-S_2+\Lambda S_2+2S_1S_2Q+S_1^3.
\enn

Thus, for $1\leq k\leq K$, we have the expansion
\be\label{Q-asy}
Q_k(x)=Q(x)+ib_kS_1(x)+iv_kG_1(x)+b_kv_kG_2(x)+b_k^2S_2(x)+b_k^3S_3(x).
\ee
A straightforward computation together with the fact $\<S_1, S_1\> + 2\<Q, S_2\> = 0$
also yields that
\be\label{Q-l2}
\|Q_k\|^2_{L^2}=\|Q\|_{L^2}^2+\calo(b_k^4+v_k^2).
\ee
\end{remark}

\subsection{Geometrical decomposition}
For every $1\leq k\leq K$,
define the modulation parameters by
$\mathcal{P}_k:=(\lbb_k, b_k, v_k, \alpha_k,\gamma_k) \in \bbr^{5}$.
Let
$\calp:= (\calp_1, \cdots, \calp_K) \in \bbr^{5K}$.
Similarly,
let $\lbb := (\lbb_1, \cdots, \lbb_K) \in \bbr^K$
and
$\lbb:=\sum_{k=1}^{K}\lbb_{k}$.
Similar notations are also used
for the remaining parameters.

Let $\omega>0$, $T<0$ and $x_k$ and $\t_k$ be as in Theorem \ref{thm-main}.
Set
\begin{align*}
    U(T,x)
    :=   \sum_{k=1}^{K} U_k(T,x) \quad with\quad
   U_k(T,x) := \lbb_k^{-\frac 12}(T) Q_k\(T,\frac{x-\a_k(T)}{\lbb_k(T)}\) e^{i\g_k(T)},
\end{align*}
where $Q_k$ is defined as in Lemma \ref{lem-app} and \begin{align} \label{PjT}
   \calp_k(T)=(\frac{\omega^2}{4} T^2, -\frac{\omega^2}{2} T, \frac{\omega^2}{4} T^2, x_k,  -\frac{4}{\omega^2 T} + \t_k).
\end{align}

\begin{proposition} (Geometrical decomposition)   \label{Prop-dec-un}
Let $u(t)$ be a solution to \eqref{equa} with $u(T)=U(T)$.
For $T$ sufficiently close to 0,
there exist $T_*<T$
and
unique modulation parameters
$\mathcal{P}
\in C^1([T_*, T]; \bbr^{5K})$,
such that
$u$
can be  decomposed into a main blow-up profile and a remainder
\begin{align} \label{u-dec}
    u(t,x)=U(t,x)+R(t,x),\ \ t\in [T_*, T],
\end{align}
where the main blow-up profile
\begin{align}  \label{U-Uj}
    U(t,x)
    =   \sum_{k=1}^{K} U_k(t,x) \quad with\quad
   U_k(t,x) = \lbb_k^{-\frac 12}(t) Q_k\(t,\frac{x-\a_k(t)}{\lbb_k(t)}\) e^{i\g_k(t)},
\end{align}
and the following orthogonality conditions hold on $[T_*, T] $
for every $1\leq k\leq K$:
\be\ba\label{ortho-cond-Rn-wn}
&{\rm Re}\< \widetilde{S}_k(t), R(t)\>=0,\ \
{\rm Re} \< \wt G_{k}(t) , R(t)\>=0,\\
& {\rm Im}\< \nabla U_{k}(t) , R(t)\>=0,\ \
{\rm Im}\< \Lambda_k U_{k}(t), R(t)\>=0,\ \
{\rm Im}\< \wt\rho_{k}(t), R(t)\>=0.
\ea\ee
Here, $\Lambda_k:=\half+(x-\a_k)\cdot \nabla$ and
\ben
\widetilde{S}_k(t,x)=\lbb_k^{-\frac 12} S_1(t,\frac{x-\a_k}{\lbb_k}) e^{i\g_k}, \;
\widetilde{G}_k(t,x)=\lbb_k^{-\frac 12} G_1(t,\frac{x-\a_k}{\lbb_k}) e^{i\g_k}, \;
\widetilde{\rho}_k(t,x)=\lbb_k^{-\frac 12} \rho_k(t,\frac{x-\a_k}{\lbb_k}) e^{i\g_k}
\enn
where $\rho_k=\rho+i\varrho_k$ with $\rho$ being defined as in \eqref{eq-rho1} and $\varrho_k$ solving
\ben
L_-\varrho_k=2b_kS_1\rho Q+b_k\Lambda\rho-2b_kS_2+2v_kG_1\rho Q+v_k\cdot \nabla\rho+v_kG_2.
\enn
\end{proposition}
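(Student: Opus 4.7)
The plan is to apply a classical modulation argument via the implicit function theorem. At $t=T$ one has $R(T)\equiv 0$ by construction, so the orthogonality conditions \eqref{ortho-cond-Rn-wn} are trivially satisfied at the explicit parameter vector $\mathcal{P}(T)$ given in \eqref{PjT}. I would introduce the $C^1$ map $F:[T_*,T]\times\mathcal{V}\to\R^{5K}$, where $\mathcal{V}$ is a small neighborhood of $\mathcal{P}(T)$ in $\R^{5K}$, whose $5K$ scalar components are the real/imaginary inner products appearing in \eqref{ortho-cond-Rn-wn} evaluated on $R=u(t)-U(t,\mathcal{P})$. Since $u\in C([t_0,T);H^{\half})$ and $U$ depends smoothly on $\mathcal{P}$, $F$ is $C^1$ in a neighborhood of $(T,\mathcal{P}(T))$ with $F(T,\mathcal{P}(T))=0$; hence the desired $C^1$ curve $\mathcal{P}(\cdot)$ (together with uniqueness) will follow from the implicit function theorem once the Jacobian $D_{\mathcal{P}}F|_{(T,\mathcal{P}(T))}$ is shown to be invertible.

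The first step toward invertibility is to exploit the separation of bubbles to reduce to a block-diagonal structure. Because $R=0$ at the basepoint, only the variation of $U$ contributes, so each Jacobian entry equals $-\mathrm{Re}$ or $-\mathrm{Im}$ of $\langle\psi_k^j,\partial_{p_\ell}U_\ell\rangle$ with test functions $\psi_k^j\in\{\widetilde S_k,\widetilde G_k,\nabla U_k,\Lambda_k U_k,\widetilde\rho_k\}$. The singularities $\{x_k\}$ are distinct while $\lambda_k(T)\sim|T|^2\to 0$, so in the rescaled frame any two bubble centers are at distance $\gtrsim|T|^{-2}$. Combined with the algebraic decay \eqref{Decay} and the pointwise bounds on $\{\mathbf{R}_{i,j}\}$ from Lemma \ref{lem-app}, this shows that every off-block entry ($k\neq\ell$) is $\calo(|T|^4)$. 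Hence the full $5K\times 5K$ matrix is block-diagonal with $5\times 5$ blocks, up to an error that vanishes as $T\to 0^-$.

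Next I would verify non-degeneracy of each $5\times 5$ diagonal block. Differentiating $U_k=\lambda_k^{-1/2}Q_k((x-\alpha_k)/\lambda_k)e^{i\gamma_k}$ via the expansion \eqref{Q-asy} gives, at leading order,
\[
\partial_{\lambda_k}U_k=-\lambda_k^{-1}\Lambda_k U_k,\ \
\partial_{\alpha_k}U_k=-\nabla U_k,\ \
\partial_{\gamma_k}U_k=iU_k,\ \
\partial_{b_k}U_k=i\widetilde S_k,\ \
\partial_{v_k}U_k=i\widetilde G_k,
\]
modulo $\calo(|T|)$ corrections coming from the nonzero $b_k(T),v_k(T)$. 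Pairing these with the five test functions and using the parity relations of Lemma \ref{lem-app} ($S_1,\rho$ even; $G_1,Q'$ odd), the cancellations $\langle\Lambda Q,Q\rangle=0$ and $\langle Q',Q\rangle=0$, together with the null-space structure of $L_\pm$ recalled in the Appendix, reduces the block at leading order to a triangular matrix whose diagonal entries are nonzero multiples of $\|S_1\|_{L^2}^2$, $\|G_1\|_{L^2}^2$, $\|Q'\|_{L^2}^2$, $\|Q\|_{L^2}^2$, and a $\langle\rho,Q\rangle$-type pairing. The auxiliary function $\rho_k=\rho+i\varrho_k$ is tailored precisely so that this last entry is nonvanishing, as in \cite{K-L-R}, giving invertibility of each block for $|T|$ sufficiently small.

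Combining these two steps yields invertibility of $D_{\mathcal{P}}F|_{(T,\mathcal{P}(T))}$, and the implicit function theorem then produces the unique $C^1$ parameter curve $\mathcal{P}(\cdot)$ on some left-interval $[T_*,T]$ such that $R(t)=u(t)-U(t,\mathcal{P}(t))$ satisfies \eqref{ortho-cond-Rn-wn} throughout. The main technical obstacle I foresee is the verification that the nonlocal correction $\varrho_k$ prescribed in the statement is indeed compatible with the half-wave profile expansion \eqref{Qk-approx}, so that the $\widetilde\rho_k$-pairing produces a nonzero Jacobian entry in the presence of the operator $D$; this is the nonlocal analogue of the single-bubble calculation in \cite{K-L-R} and is precisely why $\varrho_k$ is defined by the specific $L_-$-equation in the statement rather than by a purely local ansatz.
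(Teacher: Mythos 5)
Your proposal is correct and follows essentially the same route the paper intends: the paper itself omits the proof, remarking only that it is a "standard fixed point argument" and pointing to \cite{K-L-R} and \cite{S-Z}, and an implicit-function-theorem modulation lemma of exactly the kind you describe (local invertibility of the Jacobian at $\mathcal{P}(T)$ where $R(T)=0$, with bubble separation via Lemma~\ref{lem-dep} making the $5K\times 5K$ matrix block-diagonal up to $o(1)$ corrections, and each $5\times5$ block invertible by the parity and $L_\pm$-null-space structure) is the standard way to implement that. Two small inaccuracies worth flagging, though neither is a gap: the nonzero diagonal pairings are of the form $\langle S_1,L_-S_1\rangle$, $\langle G_1,L_-G_1\rangle$, $\langle\rho,Q\rangle=-\tfrac12\langle S_1,L_-S_1\rangle$, etc., rather than plain $L^2$-norms (they are positive because $L_->0$ on $Q^\perp$, which is what actually drives invertibility); and since $u$ is only $C^0$ in time in $H^{1/2}$, the map $F$ is $C^1$ only in $\mathcal{P}$, so the implicit function theorem yields a continuous parameter curve and the $C^1$-in-$t$ regularity is recovered afterwards by differentiating the orthogonality relations using the equation — the usual bootstrap that your write-up glosses over.
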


\begin{remark}
The geometrical decomposition in Proposition \ref{Prop-dec-un}
is actually a local version,  since the backwards time $T_*$ may depend on $T$.
Nevertheless,
as we shall see later,
by virtue of the bootstrap estimates in Theorem \ref{Thm-u-Boot} below,
for $T$ sufficiently close to $0$,
the geometrical decomposition remains valid
on a time interval $[t_0,T]$,
where $t_0$ is a universal backward time independent of $T$.

The proof of Proposition \ref{Prop-dec-un} is based on a standard fixed point argument.
We refer the reader to \cite{K-L-R}
for the proof of Proposition \ref{Prop-dec-un} in the single-bubble case.
The argument can be extended to the multi-bubble case,
see, e.g., \cite{S-Z}  for the case of NLS.
Thus,  for the sake of simplicity,
we omit the details of the proof here.
\end{remark}

\subsection{Modulation equations}
For every $1\leq k\leq K$,
define the \emph{vector of modulation equations} by
\begin{align*}
   Mod_{k}:= |\dot{\lambda}_{k}+b_{k}|+|\lambda_{k}\dot{b}_{k}+\half b_{k}^2|
   +|\dot{\alpha}_{k}-v_{k}|
                +|\lambda_{k}\dot{v}_{k}+b_{k}v_{k}|
              +|\lambda_{k}\dot{\g}_{k}-1|,
\end{align*}
where we use the notation $\dot{g}:= \frac{d}{dt}g$  for any $C^1$ function $g$.
Let $Mod:=\sum_{k=1}^{K}Mod_{k}$.

In the following proposition, we derive the preliminary estimate for these modulation equations.

\begin{proposition} [Control of modulation equations] \label{Prop-Mod-bdd}
Let $u$ be a solution to \eqref{equa} and
assume that the geometrical decomposition \eqref{u-dec} holds on $[T_*, T]$ with $T$ close to 0.
Assume the following uniform smallness for the parameters and the remainder
on $[T_*, T]$:
\ben
\lbb(t)+b(t)+v(t)+\sum_{k=1}^{K}|\a_k(t)-x_k|+\|R(t)\|_{H^\half}\ll1.
\enn
Then, there exists $C>0$ such that for any $ t\in[T_*, T]$,
\begin{align} \label{Mod-bdd}
Mod(t)\leq
C(\sum_{k=1}^{K}|{\rm Re}\<U_k, R\>|+(\lbb+b^2+v)\|R\|_{L^2}+\|R\|_{L^2}^2+\lbb\|R\|_{H^\half}^3+\lbb^2+b^4+v^2).
\end{align}
\end{proposition}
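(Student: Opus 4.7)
The plan is to substitute the decomposition (2.4) into (1.1), use the profile equation (2.2) to cancel the leading contributions of $U_k$, and then pair the resulting $R$-equation with suitable test directions so that the orthogonality conditions (2.5) isolate a $5K\times 5K$ linear system whose unknowns are precisely the components of $Mod$. Rescaling (2.2) by $\lambda_k$ and plugging into the ansatz (2.6), a direct computation yields $i\partial_t U_k + DU_k - |U_k|^2 U_k = \lambda_k^{-5/2} e^{i\gamma_k}\, \vec J_k\bigl(\tfrac{x-\alpha_k}{\lambda_k}\bigr)\cdot \vec m_k - \widetilde\Psi_k$, where $\vec m_k := (\dot\lambda_k+b_k,\ \lambda_k\dot b_k+\tfrac12 b_k^2,\ \dot\alpha_k-v_k,\ \lambda_k\dot v_k+b_k v_k,\ \lambda_k\dot\gamma_k-1)$ has $\ell^1$-norm equal to $Mod_k$ and the five components of $\vec J_k$ are the basis directions $(i\Lambda Q_k,\ i\partial_{b_k}Q_k,\ i\nabla Q_k,\ i\partial_{v_k}Q_k,\ -Q_k)$. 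Subtracting from (1.1) written for $u = U+R$ produces the remainder equation $i\partial_t R + DR = -\sum_k \lambda_k^{-5/2} e^{i\gamma_k}\vec J_k\cdot \vec m_k - \sum_k \widetilde\Psi_k - \mathcal N(U) - \mathcal L(U) R - \mathcal N_R$, where $\mathcal N(U) := |U|^2 U - \sum_k |U_k|^2 U_k$ is the bubble-interaction term, $\mathcal L(U) R$ is the linearisation of the cubic in $R$ (a bounded multiplication operator), and $\mathcal N_R$ collects all genuinely quadratic and cubic pieces in $R$.

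Differentiating each of the five orthogonality relations in (2.5) in $t$ and substituting the $R$-equation above then gives, for every $k$, a system of five scalar equations whose unknowns are the entries of $\vec m_k$. To leading order the diagonal matrix entries equal the pairings $\langle \Lambda Q, S_1\rangle$, $\langle Q', G_1\rangle$, $\langle S_1, S_1\rangle + 2\langle Q, S_2\rangle$, $\langle Q, \rho\rangle$ and their analogues, all of which are non-zero by the spectral analysis of the linearised operator around $Q$ recorded in the Appendix; off-diagonal entries are $o(1)$ by the usual symmetry arguments, and cross-bubble entries are $O(\lambda^2)$ thanks to the decoupling of $U_j, U_k$ for $j\neq k$ combined with the fixed separation $|x_j - x_k| > 0$ and the $\langle x\rangle^{-2}$ decay of $Q$. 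The full $5K\times 5K$ system is therefore uniformly invertible with bounded inverse. Working out the equation obtained from pairing with $\widetilde\rho_k$ to extract the phase component $\lambda_k\dot\gamma_k - 1$, one finds a residue proportional to $\operatorname{Re}\langle U_k, R\rangle$ (because $\widetilde\rho_k$ is not orthogonal to $\operatorname{Re} R$); this is exactly the first term $|\operatorname{Re}\langle U_k, R\rangle|$ appearing on the right-hand side of (2.8).

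It then remains to estimate the sources produced by the inversion. The $\widetilde\Psi_k$ contribution is $O(b_k^4 + v_k^2)$ by (2.3). The bubble interaction $\mathcal N(U)$, tested against a direction localised near $\alpha_k$, gives $O(\lambda^2)$ using the slow $\langle x\rangle^{-2}$ decay of $Q$ and the distinctness of the $x_j$'s. Linear-in-$R$ terms (including those coming from differentiating the time-dependent test directions) carry modulation-rate prefactors of size $\lambda + b^2 + v$ multiplying $\|R\|_{L^2}$. The quadratic part of $\mathcal N_R$ contributes $\|R\|_{L^2}^2$, while the cubic part yields $\lambda\|R\|_{H^{1/2}}^3$ after using the fractional Sobolev embedding $H^{1/2}(\R)\hookrightarrow L^p$ for all finite $p$ and absorbing one rescaling factor from the change of variables $y = (x-\alpha_k)/\lambda_k$. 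Summing over $1\leq k\leq K$ produces the claimed bound (2.8).

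The main obstacle I foresee lies in the bubble-interaction term $\mathcal N(U)$ together with the non-local action of $D$: one cannot simply write $DU = \sum_k DU_k$ and discard cross-terms, since the tail of $DU_k$ at points close to $\alpha_j$ with $j\neq k$ need not be small a priori. Quantifying these tails to the size $O(\lambda^2)$ required above forces one to use the integral representation of $D$ together with the pointwise $\langle x\rangle^{-2}$ bound on $Q$, which is the slowest decay available for the ground state in dimension one. There is essentially no margin to spare; the resulting sharp $O(\lambda^2)$ size is exactly what appears in (2.8), and every other contribution is either smaller or treated by routine pointwise and Sobolev bounds.
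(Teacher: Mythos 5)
Your proposal follows essentially the same strategy as the paper: insert the decomposition into the equation, use the algebraic identity for $i\partial_t U_k - DU_k + |U_k|^2 U_k$ (the paper's (2.11)) to isolate a modulation vector, pair the remainder equation with the five test directions to obtain a near-diagonal $5K\times 5K$ system, invert it via the non-degeneracy of the linearised operator, and then control the cross-bubble interactions by the $\langle x\rangle^{-2}$ decay of $Q$ together with the fixed separation of the singularities. The size and organisation of the error terms you list match the paper's (2.8).

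There is, however, one concrete error in your reasoning: the unstable-direction residue $\operatorname{Re}\langle U_k, R\rangle$ does \emph{not} come from the $\widetilde\rho_k$ pairing. For the pairing with $\widetilde\rho_k$, the leading linear contribution is $\langle L_+\epsilon_{k,1}, \rho\rangle = \langle \epsilon_{k,1}, L_+\rho\rangle = \langle\epsilon_{k,1}, S_1\rangle$, which vanishes (to leading order) precisely because of the orthogonality condition $\operatorname{Re}\langle\widetilde S_k, R\rangle=0$ in (2.5); so that equation produces no $\operatorname{Re}\langle U_k, R\rangle$ term. The residue in fact arises in the pairing with $\Lambda_k U_k$, i.e.\ the equation extracting $\lambda_k\dot b_k + \tfrac12 b_k^2$: there the linear part involves $\langle L_+\epsilon_{k,1}, \Lambda Q\rangle = \langle\epsilon_{k,1}, L_+\Lambda Q\rangle = -\langle\epsilon_{k,1}, Q\rangle$, and $-\langle\epsilon_{k,1},Q\rangle$ is, after inserting the expansion of $Q_k$ and the remaining orthogonality relations, $-\operatorname{Re}\langle\epsilon_k, Q_k\rangle + \mathcal O(b_k^2\|R\|_{L^2}) = -\operatorname{Re}\langle U_k, R\rangle + \mathcal O(\cdot)$. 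This is exactly what the paper computes in (2.14)--(2.17). So while your claimed conclusion is right, the mechanism you give for it is wrong; had you written out the $\widetilde\rho_k$ pairing explicitly you would have found no $\operatorname{Re}\langle U_k, R\rangle$ there, and the argument as you stated it would not produce the bound (2.8) with the correct structure.
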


\begin{remark}
In contrast to the five unstable directions in \eqref{ortho-cond-Rn-wn},
the scalar ${\rm Re}\<U_k, R\>$ on the right-hand side of \eqref{Mod-bdd}
corresponds to the extra unstable direction,
which is not controlled by the geometrical decomposition,
but by the localized mass in Lemma \ref{lem-mass-local} later.
It should be mentioned that, unlike in the single-bubble case in \cite{K-L-R},
the scalar ${\rm Re}\<U_k, R\>$ turns out to dominate the upper bound for the modulation equations,
which also reflects the feature in the multi-bubble case.
\end{remark}

\begin{proof}[Proof of Proposition \ref{Prop-Mod-bdd}]
For the reader's convenience, we present the estimate for the modulation equations $\dot{\lambda}_{j}+b_{j}$ for $1\leq j\leq K$,
as an example to illustrate the main arguments.

By inserting the decomposition \eqref{u-dec} into \eqref{equa}
we obtain the equation for the remainder $R$
\be\label{eq-U-R}
iR_t-DR+2|U_j|^2R+U_j^2\overline{R}+(i\partial_tU_j-DU_j+|U_j|^2U_j)=\sum_{l=1}^{4}H_l,
\ee
where
\ben\ba
&H_1=-\sum_{k\neq j}(i\partial_tU_k-DU_k+|U_k|^2U_k),\quad H_2=-(|U|^2U-\sum_{k=1}^{K}|U_k|^2U_k),\\
&H_3=-(2|U|^2R+U^2\ol{R}-2|U_j|^2R-U_j^2\overline{R}),\quad H_4=-(\ol{U}R^2+2U|R|^2+|R|^2R).
\ea\enn
Moreover, by \eqref{Ground} and \eqref{U-Uj},  for $1\leq k\leq K$, we have the algebraic identity
\be\ba\label{exp-eta}
i\partial_tU_k-DU_k+|U_k|^2U_k=e^{i\g_{k}}\lambda_{k}^{-\frac 32}
&\left(i(\lambda_{k}\dot{v}_{k}+b_kv_k)\partial_{v_k}Q_{k}
  + i(\lambda_{k}\dot{b}_{k}+\half b_k^2) \partial_{b_k}Q_{k}
  -i(\dot{\alpha}_{k}-v_k)Q^\prime_{k}\right.\\
     & \left.-i(\dot{\lambda}_{k}+b_k)\Lambda Q_{k} -(\lambda_{k}\dot{\g}_{k}-1)Q_{k}-\Psi_k\right)(t,\frac{x-\alpha_{k}}{\lambda_{k}}).
\ea\ee
Taking the inner product of \eqref{eq-U-R} with
$\Lambda_j  {U_{j}}$ and then taking the real part
we get
\be\label{eq-U-R2}
-{\rm Im}\<R_t, \Lambda_j  U_{j}\>+{\rm Re}\<-DR+2|U_j|^2R+U_j^2\overline{R}, \Lambda_j  U_{j}\>=\sum_{l=1}^{4}{\rm Re}\<H_l, \Lambda_j  U_{j}\>-{\rm Re}\<i\partial_tU_j-DU_j+|U_j|^2U_j, \Lambda_j  U_{j}\>.
\ee

We first consider the left-hand side of \eqref{eq-U-R2}.
Using the orthogonality conditions \eqref{ortho-cond-Rn-wn},
the identity \eqref{exp-eta} and the renormalized variable
\be\label{ren}
R(t,x)= \lbb_j^{-\frac 12} \epsilon_j(t,\frac{x-\a_j}{\lbb_j}) e^{i\g_j},\quad with \;\epsilon_j=\epsilon_{j,1}+i\epsilon_{j,2},
\ee
 we get
\be\ba\label{mod1}
&-{\rm Im}\<R_t, \Lambda_j  {U_{j}}\>={\rm Im}\<R, \partial_t\Lambda_j  {U_{j}}\>
=-{\rm Im}\<\Lambda R, \partial_t  {U_{j}}\>\\
&=-\lbb_j^{-1}{\rm Re}\<\Lambda \epsilon_j, -Q_j-\frac{i}{2}b_j^2\partial_{b_j}Q_j-ib_jv_j\partial_{v_j}Q_j+ib_j\Lambda Q_j-iv_j Q^\prime_j\>+\calo(\lbb^{-j}Mod\|R\|_{L^2})\\
&=\lbb_j^{-1}\(-{\rm Re}\< \epsilon_j, \Lambda Q_j\>+b_j{\rm Im}\< \epsilon_j, \Lambda^2 Q_j\>+\calo((Mod+b_j^2+v_j)\|R\|_{L^2})\).
\ea\ee
Using again the renormalization \eqref{ren} and the expansion of $Q_j$ in \eqref{Q-asy}
we derive
\be\ba\label{mod2}
&{\rm Re}\<-DR+2|U_j|^2R+U_j^2\overline{R}, \Lambda_j  U_{j}\> \\
=&\lbb_j^{-1}{\rm Re}\<-D\epsilon_j+2|Q_j|^2\epsilon_j+Q_j^2\overline{\epsilon}_j , \Lambda  Q_{j}\>\\
=&\lbb_j^{-1}{\rm Re}\<-D\epsilon_j+2|Q|^2\epsilon_j+2ib_jS_1Q\overline{\epsilon}_j
+Q^2\overline{\epsilon}_j , \Lambda  Q+ib_j\Lambda S_1\>+\calo(\lbb_j^{-1}(b_j^2+v_j)\|R\|_{L^2}).
\ea\ee
Combining \eqref{mod1} and \eqref{mod2} we then get
\be\ba\label{mod3}
&-{\rm Im}\<R_t, \Lambda_j  U_{j}\>+{\rm Re}\<-DR+2|U_j|^2R+U_j^2\overline{R}, \Lambda_j  U_{j}\>\\
=&\lbb_j^{-1}\(-\<L_+\epsilon_{j,1}, \Lambda Q\>-b_j\<L_-\epsilon_{j,2}, \Lambda S_1\>
+2b_j\<\epsilon_{j,2}, Q\Lambda QS_1\>+b_j\<\epsilon_{j,2}, \Lambda^2 Q\>\)\\
&+\calo(\lbb_j^{-1}(Mod+b_j^2+v_j)\|R\|_{L^2}).
\ea\ee
By the commutator formula $[D, \Lambda]=D$ and  $L_-S_1=\Lambda Q$,  the following  identity holds
\ben
L_-\Lambda S_1=-S_1+2Q\Lambda Q S_1+\Lambda Q+\Lambda^2Q.
\enn
This fact along with \eqref{mod3} and the identity $L_+\Lambda Q=-Q$ yields that
\be\ba\label{mod4}
{\rm L.H.S.\ of\ \eqref{eq-U-R2}}
&=\lbb_j^{-1}\(\<\epsilon_{j,1}, Q\>+b_j\<\epsilon_{j,2}, S_1\>-b_j\<\epsilon_{j,2}, \Lambda Q\>\)+\calo(\lbb_j^{-1}(Mod+b_j^2+v_j)\|R\|_{L^2})\\
&=\lbb_j^{-1}({\rm Re}\<\epsilon_j, Q_j\>)+\calo(\lbb_j^{-1}(Mod+b_j^2+v_j)\|R\|_{L^2})\\
&=\lbb_j^{-1}({\rm Re}\<U_j, R\>)+\calo(\lbb_j^{-1}(Mod+b_j^2+v_j)\|R\|_{L^2}),
\ea\ee
where we also used the fact that $b_j\<\epsilon_{j,2}, \Lambda Q\>={\rm Im}\<\epsilon_j, \Lambda Q_j\>+b_j^2\<\epsilon_{j,1}, \Lambda S_1\>=\calo(b_j^2\|R\|_{L^2})$, which follows from the orthogonality conditions \eqref{ortho-cond-Rn-wn} that ${\rm Im}\<\epsilon_j, \Lambda Q_j\>={\rm Im}\<R, \Lambda_j U_j\>=0$.

We then consider the right-hand side of \eqref{eq-U-R2}.
The first and second terms can be  bounded by using Lemma \ref{lem-dep}
that for some constant $C>0$ such that
\begin{align}
&|{\rm Re}\<H_1, \Lambda_j  U_{j}\>|
\leq C\lbb Mod,\nonumber\\
&|{\rm Re}\<H_2, \Lambda_j  U_{j}\>|\leq C\lbb.\label{mod5}
\end{align}
Similarly, the third term which is linear in $R$ can also be  bounded by Lemma \ref{lem-dep}:
\be\label{mod7}
|{\rm Re}\<H_3, \Lambda_j  U_{j}\>|\leq C\lbb\|R\|_{L^2}.
\ee
Moreover, the forth term in which is nonlinear in $R$ can be  bounded by Lemma \ref{Lem-GN}:
\be\label{mod8}
|{\rm Re}\<H_4, \Lambda_j  U_{j}\>|\leq C(\lbb^{-1}\|\epsilon_j\|_{L^2}^2+\lbb^{-1}\|\epsilon_j\|_{L^6}^3)
\leq C(\lbb^{-1}\|R\|_{L^2}^2+\|R\|_{H^\half}^3).
\ee

Concerning the last term on the right-hand side of \eqref{eq-U-R2},
we recall that $L_->0$ on $Q^\bot$,
and so $e_1:=\<S_1, \Lambda Q\>=\<S_1, L_-S_1\> >0$.
This fact along with the identity \eqref{exp-eta}  and \eqref{est-psi} yields that
\be\label{mod9}
-{\rm Re}\<i\partial_tU_j-DU_j+|U_j|^2U_j, \Lambda_j  U_{j}\>
=\lbb_j^{-1}\(e_1(\lambda_{j}\dot{b}_{j}+\half b_j^2)+\calo(b_jMod+b_j^4+v_j^2)\).
\ee

Inserting \eqref{mod4}-\eqref{mod9} into \eqref{eq-U-R2} we get
\ben
|\lambda_{j}\dot{b}_{j}+\half b_j^2|\leq C(|{\rm Re}\<U_j, R\>|+(\lbb+b_j^2+v_j)\|R\|_{L^2}+\|R\|_{L^2}^2+\lbb\|R\|_{H^\half}^3+b_jMod+\lbb^2+b_j^4+v_j^2),
\enn
Thus, by summing over $j$ from $1$ to $K$ we arrive at
\ben
\sum_{j=1}^{K}|\lambda_{j}\dot{b}_{j}+\half b_j^2|\leq C(\sum_{j=1}^{K}|{\rm Re}\<U_j, R\>|+(\lbb+b^2+v)\|R\|_{L^2}+\|R\|_{L^2}^2+\lbb\|R\|_{H^\half}^3+bMod+\lbb^2+b^4+v^2).
\enn

Taking the inner products of equation (\ref{eq-U-R})
with $i\wt{S}_j$, $i\wt{G}_j$, $\nabla {U_j}$, $ \wt{\rho}_j$,
respectively, and then taking the real parts,
the remaining four modulation equations can be estimated similarly.
Summing these estimates together yields \eqref{Mod-bdd},
thereby finishing the proof of Proposition \ref{Prop-Mod-bdd}.
\end{proof}

We end this section with the localization functions for the bubbling construction.

\subsection{Localization functions}  \label{Subsec-Local}
We introduce the localization functions which will be frequently used in the construction.
Let $\{x_k\}_{k=1}^K$ be the $K$ distinct points in Theorem \ref{thm-main} and set
\be\label{def-sig}
\sigma :=\frac{1}{12}\min_{1\leq k \leq K-1}\{(x_{k+1}-x_k)\}> 0.
\ee
Let $\Phi: \R\rightarrow [0,1]$ be a smooth function  such that
$|\Phi^\prime(x)| \leq C \sigma^{-1}$ for some $C>0$,
$\Phi(x)=1$ for $x\leq 4\sigma$
and $\Phi(x)=0$ for $x\geq 8\sigma$.
The localization functions $\Phi_k$, $1\leq k\leq K$, are defined by
\ben
&\Phi_1(x) :=\Phi(x-x_1), \ \ \Phi_K(x) :=1-\Phi(x-x_{K-1}),  \\
&\Phi_k(x) :=\Phi(x-x_{k})-\Phi(x-x_{k-1}),\ \ 2\leq k\leq K-1.
\enn
One has the  partition of unity, that is, $1\equiv\sum_{k=1}^K \Phi_k$.

\section{Bootstrap estimates} \label{Sec-Boot}
The objective of this section is to establish the bootstrap estimates
for the remainder  and parameters in the geometrical decomposition,
which play the crucial role in proving the uniform estimates and completing the construction of  multi-bubble blow-up solutions in  Section 7.

\begin{theorem}[Bootstrap estimates] \label{Thm-u-Boot}
 Let $\delta, \varsigma\in(0, \frac12)$ and $\delta+2\varsigma<1$.
 There exists a uniform backwards time $t_0<0$ such that the following holds.
 Let $u$ be a solution to \eqref{equa} and admit the geometrical decomposition on $[T_*, T]$ as in Proposition \ref{Prop-dec-un}
 with $T_*>t_0$.
 Assume that for all $t\in [T_*, T]$, $1\leq k\leq K$,   the following bounds for the remainder and parameters hold:
\begin{align}
&\|D^\half R(t)\|_{L^2}\leq |t|^{2-\delta},\quad\|R(t)\|_{L^2}\leq |t|^{3-\delta}, \quad \|D^{\half+\varsigma}R\|_{L^2}\leq |t|^{1-\delta-2\varsigma},\label{R-Tt}\\
&|\la_{k}(t) - \frac{\omega^2}{4} t^2 |\leq |t|^{4-2\delta},\quad |b_{k}(t)  + \frac{\omega^2}{2} t |\leq |t|^{3-2\delta},   \label{lbbn-Tt} \\
&|\al_{k}(t)-x_k|\leq |t|^{3-\delta},\quad |v_k(t)-\frac{\omega^2}{4} t^2|\leq |t|^{4-2\delta},  \label{vn-Tt}\\
&|\g_{k}(t) + \frac{4}{\omega^2 t} - \theta_k| \leq |t|^{1-2\delta}.  \label{thetan-Tt}
\end{align}
Then,  these bounds can be improved such that, for all $t\in [T_*, T]$ and $1\leq k\leq K$,
\begin{align}
&\|D^\half R(t)\|_{L^2}\leq \frac{1}{2}|t|^{2-\delta},\quad\|R(t)\|_{L^2}\leq \frac{1}{2}|t|^{3-\delta}, \quad \|D^{\half+\varsigma}R\|_{L^2}\leq \half |t|^{1-\delta-2\varsigma}.  \label{wn-Tt-boot-2} \\
&|\la_{k}(t) - \frac{\omega^2}{4} t^2 |\leq \half|t|^{4-2\delta},\quad   |b_{k}(t)  + \frac{\omega^2}{2} t |\leq \half|t|^{3-2\delta},   \label{lbbn-Tt2} \\
&|\al_{k}(t)-x_k|\leq \half|t|^{3-\delta},\quad |v_k(t)-\frac{\omega^2}{4} t^2|\leq \half|t|^{4-2\delta},  \label{vn-Tt12}\\
&|\g_{k}(t) + \frac{4}{\omega^2 t} - \theta_k| \leq \half|t|^{1-2\delta}. \label{thetan-Tt12}
\end{align}
\end{theorem}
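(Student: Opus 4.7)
The plan is a continuity/bootstrap argument: assuming that the a priori bounds \eqref{R-Tt}--\eqref{thetan-Tt} hold on some $[T_*,T]$ with $T_*>t_0$ and $|t_0|$ sufficiently small, I aim to show they automatically improve by strictly more than a factor of $1/2$. Three layers need to be closed in order: first, the $L^2$ and $H^{\half+\varsigma}$ control of the remainder $R$; second, the extra unstable scalar $\mathrm{Re}\<U_k,R\>$; and third, the five modulation parameters per bubble.

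To close the remainder bounds I would use the generalized energy functional $\mathfrak{I}$ to be constructed in Section \ref{Sec-Gen-Energy}. Two ingredients are needed: a coercivity bound of the form $\mathfrak{I}(t)\gtrsim \|R(t)\|_{L^2}^2/|t|^2$ modulo negligible errors, which rests on the spectral properties of the linearized operators $L_\pm$ together with the orthogonality conditions \eqref{ortho-cond-Rn-wn}; and a monotonicity bound of the shape $d\mathfrak{I}/dt \geq C\|R\|_{L^2}^2/|t|^{3} + \calo(|t|^{3-\delta})$ plus harmless cross terms, obtained by differentiating $\mathfrak{I}$ along the flow and using Proposition \ref{Prop-Mod-bdd} to absorb the parameter derivatives. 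Integrating the monotonicity from $t$ up to $T$ gives $\mathfrak{I}(t)\lesssim |t|^{4-\delta}$, whence coercivity yields $\|R(t)\|_{L^2}\lesssim |t|^{3-\delta/2}$, which beats $\half|t|^{3-\delta}$ once $|t_0|$ is small. A parallel argument with a higher-order analogue of $\mathfrak{I}$ adapted to $\|D^{\half}R\|_{L^2}$ and $\|D^{\half+\varsigma}R\|_{L^2}$ closes \eqref{wn-Tt-boot-2}.

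Next I would control the scalar $\mathrm{Re}\<U_k,R\>$ by a localized mass identity (also prepared in Section \ref{Sec-Gen-Energy}) of the form $|\mathrm{Re}\<U_k,R\>|\lesssim \int_t^0 \|R(s)\|_{L^2}\,ds+|t|^4\lesssim |t|^{4-\delta}$. Feeding this together with the just-improved remainder estimate into Proposition \ref{Prop-Mod-bdd} yields $Mod(t)=\calo(|t|^{4-2\delta})$. Backward integration from $T$ with the explicit terminal data $\calp(T)$ from \eqref{PjT} then closes the parameters one by one: $\dot\lbb_k+b_k=\calo(Mod)$ produces the $\lbb_k$ estimate in \eqref{lbbn-Tt2}; dividing $\lbb_k\dot b_k+b_k^2/2=\calo(Mod)$ by $\lbb_k\sim t^2$ and integrating gives the $b_k$ estimate; analogous computations produce \eqref{vn-Tt12} for $\alpha_k,v_k$; and the phase bound \eqref{thetan-Tt12} follows from integrating $(\lbb_k\dot\gamma_k-1)/\lbb_k\sim Mod/t^4 \lesssim |t|^{-2\delta}$, whose primitive is exactly of size $|t|^{1-2\delta}$. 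This last step is only barely integrable and reflects the narrow corridor between $|t|^{2+}$ and $|t|^{3-}$ highlighted in the introduction, and it is also the ultimate reason for the constraint $\delta+2\varsigma<1$.

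The main obstacle, where the bulk of the technical work lies, is the proof of the monotonicity of $\mathfrak{I}$ in the nonlocal multi-bubble setting. When one tries to decouple $\mathfrak{I}$ into localized pieces via the cutoffs $\Phi_k$ from Subsection \ref{Subsec-Local}, the fractional operator $D$ does not commute with $\Phi_k$, and the commutator $[D,\Phi_k]$ only decays pointwise like $\<x\>^{-2}$ instead of exhibiting the rapid algebraic or exponential decay available in the NLS setting. Controlling the resulting cross terms with sufficient decay forces the use of the Calder\'on commutator estimate together with the integration representation formula for $D$, and the errors extracted in this way must be arranged to fit inside the narrow corridor $|t|^{3-}\lesssim\|R\|_{L^2}\lesssim |t|^{2+}$; this is precisely the technical bottleneck that Sections \ref{Sec-Gen-Energy}--\ref{Sec-Proof-Boot} are designed to overcome.
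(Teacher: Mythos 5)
Your overall architecture is essentially the paper's: close $X(t)=\|D^{\half}R\|_{L^2}^2+t^{-2}\|R\|_{L^2}^2$ via the coercivity and monotonicity of $\mathfrak I$ integrated backward from $\mathfrak I(T)=0$, control $\mathrm{Re}\langle U_k,R\rangle$ by the localized mass, and then integrate each modulation equation back from the explicit terminal data $\calp(T)$. That much of the proposal follows the paper correctly and largely in the right order.

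There is, however, a genuine gap in your treatment of $v_k$. You propose that "analogous computations" to the ones for $b_k$ (namely, dividing the corresponding modulation equation $\lambda_k\dot v_k+b_kv_k=\calo(Mod)$ by $\lambda_k$ and integrating) produce \eqref{vn-Tt12}. They do not. Carrying out this integration with the a priori bounds, one finds $\dot v_k=\frac{2}{t}v_k+\calo(|t|^{2-\delta})$, where the $\calo(|t|^{2-\delta})$ error is dominated not by $Mod/\lambda_k$ but by the deviation of $b_k/\lambda_k$ from $-2/t$, which under the a priori bounds on $\lambda_k$ and $b_k$ is only $\calo(|t|^{1-2\delta})$ multiplied by $|v_k|\sim t^2$. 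Writing $v_k=t^2w$, this yields $|w(t)-\omega^2/4|\lesssim|t|^{1-\delta}$, i.e.\ $|v_k-\frac{\omega^2}{4}t^2|\lesssim|t|^{3-\delta}$, which for $\delta<1$ is strictly \emph{worse} than the target $\tfrac12|t|^{4-2\delta}$. The paper closes this loop by an entirely separate ingredient: the \emph{localized momentum} identity (Lemma~\ref{lem-lcomom}), which yields the much stronger bound $|v_k/\lambda_k-1|=\calo(|t|^{2-\delta})$ and hence, once $\lambda_k$ is improved, $|v_k-\frac{\omega^2}{4}t^2|=\calo(|t|^{4-\delta})$. You mention the localized mass but never the momentum; without it the $v_k$ bootstrap cannot close.

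Two related but smaller issues. First, you state $Mod(t)=\calo(|t|^{4-2\delta})$, whereas what is proved and needed is $Mod(t)=\calo(|t|^{4-\delta})$ (Lemma~\ref{Lem-P-ve-U}\,(ii), driven by $\mathrm{Re}\langle U_k,R\rangle=\calo(|t|^{4-\delta})$). The tighter exponent is essential: with only $|t|^{4-2\delta}$ one obtains $|\lambda_k-\frac{\omega^2}{4}t^2|\leq C|t|^{4-2\delta}$ with an uncontrolled $C$, and for $\gamma_k$ the integrand $\sim|\lambda_k-\frac{\omega^2}{4}t^2|/t^4$ is then only $\calo(|t|^{-2\delta})$, whose primitive is of the same order $|t|^{1-2\delta}$ as the hypothesis — not a strict improvement. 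The correct order of operations is to close $\lambda_k$ first at the improved rate $C|t|^{4-\delta}\leq\tfrac12|t|^{4-2\delta}$, and only then insert the improved $\lambda_k$ into the $\gamma_k$ estimate to obtain $\calo(|t|^{-\delta})$ and a primitive of order $|t|^{1-\delta}$, which does beat $\tfrac12|t|^{1-2\delta}$. Second, your suggestion of a "higher-order analogue of $\mathfrak I$" for $\|D^{\half+\varsigma}R\|_{L^2}$ is not what the paper does; it closes that bound by a direct energy/ODE estimate on $Y(t)=\|D^{\half+\varsigma}R\|_{L^2}^2$ derived from the remainder equation, adapting the single-bubble argument from \cite{K-L-R}. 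This last point is a presentational difference rather than a flaw, but the $v_k$ issue and the $Mod$ exponent are substantive and would need to be repaired for the proposal to close.
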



According to \eqref{equa},
we have the equation for
the remainder $R$
\begin{align} \label{equa-R}
   i\partial_t R -D R+|u|^2u-|U|^2U=-\eta,
\end{align}
where the error $\eta$ satisfies
\begin{align} \label{etan-Rn}
    \eta = i\partial_t U -D U+|U|^2U.
\end{align}
Define a quantity of the remainder
\begin{align} \label{X-def}
X(t):=\|D^\half R(t)\|_{L^2}^2+t^{-2}\|R(t)\|_{L^2}^2.
\end{align}

As a consequence of \eqref{R-Tt}-\eqref{thetan-Tt},
we  have the following a prior bounds for the modulation parameters,
the modulation equations and the error $\eta$, which will be frequently used in the proof of Theorem \ref{Thm-u-Boot}.

\begin{lemma}   \label{Lem-P-ve-U}
By the assumptions in Theorem \ref{Thm-u-Boot}, there exists a positive constant $C$ independent of $R$
such that  for all $t\in [T_*, T]$,  the following holds:\\
(i) For the modulation parameters, $1\leq k\leq K$,
\begin{align}\label{est-pa}
 C t^2\leq \lbb_k(t)\leq \frac{t^2}{C},\quad  C|t|\leq b_k(t)\leq \frac{|t|}{C},\quad  Ct^2\leq v_k(t) \leq \frac{t^2}{C}.
\end{align}
(ii) For the modulation equations
\begin{align} \label{Mod-w-lbb}
Mod(t) \leq C |t|^{4-\delta}.
\end{align}
(iii) For the error $\eta$,
\begin{align}\label{eta-L2}
\| \nabla^\nu\eta(t)\|_{L^2}
      \leq C|t|^{-2-2\nu}(Mod+t^4),\quad \nu=0, 1, 2.
\end{align}
\end{lemma}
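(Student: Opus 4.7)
The plan is as follows. Part (i) is immediate from the bootstrap bounds \eqref{lbbn-Tt}--\eqref{vn-Tt}: for $t_0$ chosen sufficiently close to $0$, the leading terms $\tfrac{\omega^2}{4}t^2$ and $-\tfrac{\omega^2}{2}t$ dominate the $O(|t|^{4-2\delta})$ and $O(|t|^{3-2\delta})$ perturbations respectively, and the two-sided bounds for $\lbb_k$, $b_k$, $v_k$ follow at once by the triangle inequality.

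For part (ii), I would apply Proposition \ref{Prop-Mod-bdd} and bound each term on its right-hand side using (i) and the bootstrap \eqref{R-Tt}. Since $\lbb + b^2 + v = O(t^2)$, $\|R\|_{H^{\half}}\lesssim |t|^{2-\delta}$ and $\|R\|_{L^2}\leq |t|^{3-\delta}$, the algebraic contributions $(\lbb+b^2+v)\|R\|_{L^2}\lesssim |t|^{5-\delta}$, $\|R\|_{L^2}^2\lesssim |t|^{6-2\delta}$, $\lbb\|R\|_{H^{\half}}^3\lesssim |t|^{8-3\delta}$, together with the profile errors $\lbb^2+b^4+v^2=O(t^4)$, are all bounded by $C|t|^{4-\delta}$ for $t$ small. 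The delicate term is $|\mathrm{Re}\<U_k,R\>|$, for which Cauchy--Schwarz yields only $O(|t|^{3-\delta})$ -- one power short. To squeeze out the extra power of $|t|$, I would derive a localized mass identity of the form
\[
|\mathrm{Re}\<U_k,R\>(t)| \lesssim \int_t^0 \|R(s)\|_{L^2}\,ds + |t|^4,
\]
obtained by testing the equation against the cutoff $\Phi_k$ from Subsection \ref{Subsec-Local}, expanding $u=U+R$, integrating in time, and exploiting the orthogonality conditions \eqref{ortho-cond-Rn-wn} together with the bubble decoupling of Lemma \ref{lem-dep} to handle the bubbles $U_j$ with $j\neq k$, as well as the commutator $[\Phi_k,D]$ arising from the nonlocality of the half-wave operator. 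Combined with $\|R(s)\|_{L^2}\leq|s|^{3-\delta}$ this yields $|\mathrm{Re}\<U_k,R\>| \lesssim |t|^{4-\delta}$, and summing in $k$ gives (ii).

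For part (iii), I would decompose
\[
\eta = \sum_{k=1}^K \bigl(i\partial_t U_k - DU_k + |U_k|^2 U_k\bigr) + \Bigl(|U|^2U - \sum_{k=1}^K |U_k|^2 U_k\Bigr).
\]
By the identity \eqref{exp-eta}, each single-bubble term equals $\lbb_k^{-3/2}e^{i\gamma_k}$ times a linear combination of the five modulation-equation coefficients against Schwartz profiles (derivatives of $Q_k$), plus the remainder $\Psi_k$. Returning to the original variable, the $L^2$ norm picks up a factor $\lbb_k^{-1}$ and each additional spatial derivative a further $\lbb_k^{-1}$; combined with the derivative bounds on $\{\mathbf{R}_{i,j}\}$ from Lemma \ref{lem-app} and the estimate $\|\nabla^\nu\Psi_k\|_{L^2}\lesssim b_k^4+v_k^2\lesssim t^4$ from \eqref{est-psi}, and (i)--(ii), this contributes $\lesssim |t|^{-2-2\nu}(Mod+t^4)$. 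The cross interactions $|U|^2U-\sum_k|U_k|^2U_k$ involve products of bubbles separated by distance $\sim\sigma$ and of width $\lbb\sim t^2$; Lemma \ref{lem-dep} gives them an $O(t^4)$ contribution (with the analogous scaling for derivatives), safely absorbed in the same upper bound.

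The main obstacle is the sharp control of $\mathrm{Re}\<U_k,R\>$ in (ii): this is exactly the ``extra unstable direction'' flagged in the remark after Proposition \ref{Prop-Mod-bdd}, not pinned down by the five orthogonality conditions, and in the multi-bubble setting the slow $\<x\>^{-2}$ decay of $Q$ together with the nonlocality of $D$ rules out naive global mass conservation and forces a genuinely localized argument that separates the different singular sites.
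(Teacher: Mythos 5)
Your proposal is correct and follows essentially the same route as the paper: part (i) is a direct consequence of the bootstrap, part (ii) invokes Proposition \ref{Prop-Mod-bdd} together with the localized-mass estimate (which is exactly the content of Lemma \ref{lem-mass-local}, proved by testing the equation against $\Phi_k$, using the Calder\'on commutator bound for $[\Phi_k,D]$ and the decoupling Lemma \ref{lem-dep}, and integrating in time so that $|\mathrm{Re}\langle U_k,R\rangle|\lesssim |t|^{4-\delta}$), and part (iii) uses the single-bubble error formula \eqref{exp-eta} and Lemma \ref{lem-dep} for the cross terms with the same rescaling bookkeeping. The only minor inaccuracy is that the derivation of the localized-mass bound does not rely on the orthogonality conditions \eqref{ortho-cond-Rn-wn}, but rather on the boundary condition $u(T)=U(T)$ and the fact that $\int|U|^2\Phi_k$ is almost conserved; this does not affect your overall argument.
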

\begin{proof}
It is easy to get \eqref{est-pa}.
Then,
by virtue of Proposition \ref{Prop-Mod-bdd}, \eqref{R-Tt}, \eqref{est-pa}
and Lemma \ref{lem-mass-local} in Section 4
we are able to get \eqref{Mod-w-lbb}.
As for \eqref{eta-L2},  we first expand $\eta$ by using \eqref{etan-Rn}
\ben\ba
    \eta = \sum_{k=1}^{K}(i\partial_t U_k -D U_k+|U_k|^2U_k)+(|U|^2U-\sum_{k=1}^{K}|U_k|^2U_k)
    =I+II.
\ea\enn
Then,  using the  identity \eqref{exp-eta}  we have
\ben
| \nabla^{\nu}I|\leq C\sum_{k=1}^{K}\lbb_k^{-\frac32-\nu}(Mod_k \<\frac{x-\a_k}{\lbb_k}\>^{-2}+|\Psi_k|),
\enn
which along with \eqref{est-psi} and \eqref{est-pa} yields that
\ben
\|\nabla^{\nu}I\|_{L^2}\leq C|t|^{-2-2\nu}(Mod+t^4).
\enn
Moreover, by Lemma \ref{lem-dep} and \eqref{est-pa},
\ben
\|\nabla^{\nu}II\|_{L^2}\leq C|t|^{-2-2\nu}t^4.
\enn
Combining the above estimates we get the  bound \eqref{eta-L2}.
\end{proof}

The proof of Theorem \ref{Thm-u-Boot} occupies Sections \ref{Sec-Local-Mass-Moment},
\ref{Sec-Gen-Energy} and \ref{Sec-Proof-Boot},
and relies on the estimates \eqref{est-pa}-\eqref{eta-L2}.
The strategy of proof proceeds mainly in three steps.
We first control the localized mass and momentum in Section \ref{Sec-Local-Mass-Moment}.
Then, in Section \ref{Sec-Gen-Energy} we construct a generalized energy functional
adapted to the multi-bubble case,
and prove the crucial properties of monotonicity and coercivity.
At last, in Section \ref{Sec-Proof-Boot},
we prove the bootstrap estimates in Theorem \ref{Thm-u-Boot}
by using the backwards integration from singularities.

\section{Localized mass and momentum} \label{Sec-Local-Mass-Moment}
In this section, we assume  the assumptions in Theorem \ref{Thm-u-Boot} to hold. Then we first study the \emph{localized mass} $\int |u |^2\Phi_kdx$,
in order to obtain  the estimates for the unstable directions ${\rm Re}\langle U_k, R\rangle$, $1\leq k\leq K$.
Note that the localized mass are no longer conserved in the multi-bubble case.
\begin{lemma}  [Estimate of ${\rm Re}\langle U_k, R\rangle$]    \label{lem-mass-local}
For all $t\in [T_*, T]$ and $1\leq k\leq K$, we have
\begin{align} \label{est-mass-local}
    2{\rm Re}\int U_{k}\ol Rdx+\int |R(t)|^2\Phi_kdx
  = \calo(|t|^{4-\delta}).
\end{align}
\end{lemma}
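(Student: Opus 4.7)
My approach is to relate the quantity in \eqref{est-mass-local} to the \emph{localized mass} $\mathcal M_k(t):=\int |u(t)|^2 \Phi_k\,dx$, and to control $\mathcal M_k$ by integrating its time derivative backwards from $T$, where $R(T)=0$. Using $u=U+R$, expand
\begin{equation*}
\mathcal M_k(t) = \int|U|^2\Phi_k\,dx + 2\mathrm{Re}\int U\overline R\,\Phi_k\,dx + \int|R|^2\Phi_k\,dx.
\end{equation*}
By the slow algebraic decay \eqref{Decay} of $Q$, the tails satisfy $|U_j(x)|\lesssim|t|^3/(x-\alpha_j)^2$ outside the $\lambda_j$-core, so a direct computation yields $\|(1-\Phi_k)U_k\|_{L^2}+\sum_{j\neq k}\|\Phi_k U_j\|_{L^2}\lesssim|t|^3$, and the cross products $\int\overline{U_l}U_j\Phi_k\,dx$ with $l\neq j$ are $\calo(|t|^6)$. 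Combined with the mass expansion \eqref{Q-l2} and the bootstrap $\|R\|_{L^2}\le|t|^{3-\delta}$, this yields
\begin{equation*}
\mathcal M_k(t) - \|Q\|_{L^2}^2 = 2\mathrm{Re}\int U_k\overline R\,dx + \int|R|^2\Phi_k\,dx + \calo(|t|^4).
\end{equation*}

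Next I compute $\frac{d\mathcal M_k}{dt}$ using \eqref{equa}. The contribution of the nonlinearity vanishes pointwise (since $|u|^2 u\cdot\overline u\in\R$), and the self-adjointness of $D$ together with the Calder\'on integral representation
\begin{equation*}
[D,\Phi_k]u(x) = \frac{1}{\pi}\,\mathrm{p.v.}\int\frac{(\Phi_k(x)-\Phi_k(y))u(y)}{(x-y)^2}\,dy
\end{equation*}
give
\begin{equation*}
\frac{d\mathcal M_k}{dt}(t) = -\mathrm{Im}\int\overline u\,[D,\Phi_k]u\,dx.
\end{equation*}
Splitting $u=U+R$ into four bilinear pieces: the $UU$-term is $\calo(|t|^4)$ thanks to the pointwise bound $\|[D,\Phi_k]U\|_{L^2}\lesssim|t|^3$ (obtained from the $\sigma$-separation of $\mathrm{supp}(\Phi_k')$ from every bubble center together with the $1/\langle\cdot\rangle^2$ tails of $Q_k$), the $UR$ and $RU$ pieces are $\calo(|t|^{3-\delta})$ via Cauchy--Schwarz and the $L^2$-boundedness of the zeroth-order commutator $[D,\Phi_k]$, and the $RR$-piece is $\calo(|t|^{6-2\delta})$. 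Hence $|d\mathcal M_k/dt|\lesssim|t|^{3-\delta}$. Integrating on $[t,T]$ and invoking $\mathcal M_k(T)=\|Q\|_{L^2}^2+\calo(T^4)$ from the first paragraph yields $\mathcal M_k(t)=\|Q\|_{L^2}^2+\calo(|t|^{4-\delta})$; combining with the expansion above produces \eqref{est-mass-local}.

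The main obstacle is controlling the nonlocal commutator $[D,\Phi_k]u$. Unlike the NLS situation, where a single local integration by parts reduces the analogous term to a boundary-type integral supported on $\mathrm{supp}(\nabla\Phi_k)$ (where $u$ is pointwise small from the bubble tails), here the kernel of $[D,\Phi_k]$ spreads over all of space. The remedy is the pointwise Calder\'on representation combined with the geometric observation that a non-vanishing integrand forces $(x,y)$ to straddle the transition region, placing at least one of them at distance $\gtrsim\sigma$ from every bubble center; the slow $1/\langle x\rangle^2$ decay of $Q$ is then just sufficient to produce the desired $|t|^{3-\delta}$ bound on the time derivative, in line with the strategy emphasized in the introduction for dealing with the nonlocal structure in dimension one.
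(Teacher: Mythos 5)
Your overall strategy -- expand the localized mass $\mathcal M_k(t)=\int|u|^2\Phi_k$, compute its time derivative via the commutator $[D,\Phi_k]$, and integrate back from $T$ using $R(T)=0$ -- is precisely the paper's approach, and your treatment of the $UR$ term (Cauchy--Schwarz with the Calder\'on bound $\|[D,\Phi_k]R\|_{L^2}\lesssim\|R\|_{L^2}$) and of the $RR$ term is correct. There is, however, a genuine gap in your bound for the $UU$ contribution: the estimate $\|[D,\Phi_k]U\|_{L^2}\lesssim|t|^3$ is false, and the remedy you sketch (``a non-vanishing integrand forces $(x,y)$ to straddle the transition region, placing at least one of them at distance $\gtrsim\sigma$ from every bubble center'') does not hold. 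The kernel $K(x,y)=(\Phi_k(x)-\Phi_k(y))/(x-y)^2$ is nonzero as soon as $\Phi_k(x)\neq\Phi_k(y)$, which already occurs when $x$ lies in the \emph{core} of $x_k$ (where $\Phi_k=1$) and $y$ lies in the \emph{core} of some $x_l$, $l\neq k$ (where $\Phi_k=0$); in that regime the integrand is large on both sides. Quantitatively, for $x$ near $x_k$, $|[D,\Phi_k]U_l(x)|\sim\|U_l\|_{L^1}/\sigma^2\sim|t|$, hence $\|[D,\Phi_k]U\|_{L^2}\sim|t|$, not $|t|^3$. Pairing this against $U_k$, and using $\|U_k\|_{L^1}\sim|t|$, the cross-bubble piece of $\mathrm{Im}\int\bar U\,[D,\Phi_k]U$ is only $\calo(t^2)$ by any pointwise or $L^1$--$L^\infty$ argument; integrating in time then gives $\calo(|t|^3)$, which is \emph{not} $\calo(|t|^{4-\delta})$, so the argument does not close.

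The ingredient your proposal misses is the algebraic cancellation in equation \eqref{Q1Qk-cancel} of the paper. Writing the cross-bubble contribution directly as $2\,\mathrm{Im}\int(\bar U_k DU_l+\bar U_l DU_k)\Phi_k\,dx$ (which is equivalent to the commutator form by integration by parts) and inserting the approximate profile equation \eqref{eq-q}, one gets $DQ_l\approx -Q_l+|Q_l|^2Q_l+\calo(|t|\langle\cdot\rangle^{-2})$ (cf.\ \eqref{id-qk}). After factoring out the common prefactor (using the synchronization $\lbb_k\approx\lbb_l$ from the bootstrap), the leading $-Q_l$ contributions combine to $-\bar Q_kQ_le^{i(\g_l-\g_k)}-\bar Q_lQ_ke^{i(\g_k-\g_l)}=-2\mathrm{Re}(\bar Q_kQ_le^{i(\g_l-\g_k)})$, a pointwise \emph{real} quantity whose $\mathrm{Im}$ vanishes identically. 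What survives is the $\calo(|t|\langle\cdot\rangle^{-2})$ error (plus the $\calo(|t|^{-2\delta})$ deviations of $\lbb_k^{-1}$ from $4/(\omega^2t^2)$), and only these give the $\calo(|t|^3)$ the final integration requires. This cancellation is invisible to the kernel-splitting and $L^2$ commutator bounds you propose, so your claim that the $UU$ term is $\calo(|t|^4)$ -- or even $\calo(|t|^3)$ -- is unjustified as stated; the argument must route through the symmetrized form and the approximate profile equation, as in the paper's estimate of $I_2$.
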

\begin{remark}
The unstable directions ${\rm Re}\langle U_k, R\rangle$, $1\leq k\leq K$,
arise in serval important estimates, such as the coercivity of the linearized operator \eqref{coer}
and the estimate for the modulation equations \eqref{Mod-bdd}.
However, unlike in the single-bubble case for \eqref{equa}
or the multi-bubble case for the NLS \eqref{NLS},
due to the non-conservation of the localized mass
and the strong coupling between different profiles,
the analysis towards the derivation of \eqref{est-mass-local} is more delicate.
This also reflects the feature of the bubbling phenomena in the nonlocal problem \eqref{equa}.
\end{remark}

{\bf Proof of Lemma \ref{lem-mass-local}.}
Without loss of generality, we prove \eqref{est-mass-local} for $k=1$.
Using \eqref{u-dec}, Lemmas \ref{lem-dep} and  \ref{Lem-P-ve-U}, we expand the localized mass
\begin{align*}
\int |u |^2\Phi_1dx
= \int |U |^2\Phi_1dx +\int |R |^2\Phi_1dx +2{\rm Re}\int U_1 \ol R dx
   + \calo(|t|^3\|R\|_{L^2}),
\end{align*}
which yields that
\begin{align} \label{uL2-phi}
&|2{\rm Re}\int (U_1 \ol R)(t) dx+\int |R(t)|^2\Phi_1 dx|\nonumber \\
 \leq&  |\int |U(T)|^2\Phi_1 dx-\int |U(t)|^2\Phi_1 dx|+|\int |u(t)|^2\Phi_1 dx-\int |u(T)|^2\Phi_1 dx|+\calo( |t|^{3} \|R\|_{L^2}).
\end{align}
where in the last step we also used the boundary condition $u(T)=U(T)$.

Using \eqref{Q-l2}, Lemmas \ref{lem-dep} and Lemma \ref{Lem-P-ve-U},
we have that for any $t\in [T^*, T]$,
\ben
 \int |U(t)|^2\Phi_1 dx=\int|\sum_{k=1}^{K}U_k(t)|^2\Phi_1 dx=\int |U_1(t)|^2dx+\calo(t^4)=\|Q\|^2_{L^2}+\calo(t^4),
\enn
which yields the estimate  for the first term in \eqref{uL2-phi} that, for some $C>0$,
\be\label{est-loc-1}
|\int |U(T)|^2\Phi_1 dx-\int |U(t)|^2\Phi_1 dx|\leq Ct^4.
\ee

The second term on the right-hand side of \eqref{uL2-phi}, which is the evolution of the localized mass,
can be estimated by using equation \eqref{equa}. In fact, we have
\begin{align} \label{du2-bc}
 &\frac{d}{dt}\int |u|^2\Phi_1dx
 = 2{\rm Im}\int \ol{u} D u\Phi_1 dx\nonumber\\
 &= 2{\rm Im}\int \ol{U} DU\Phi_1 dx+2{\rm Im}\int (\ol{U} DR+\ol{R} DU)\Phi_1 dx+2{\rm Im}\int \ol{R} DR\Phi_1 dx
 =:I+II+III.
 \end{align}
We shall estimate the above three terms separately.

Remark that, due to the nonlocal operator $D$,
the estimates below are more delicate than those in \cite{RSZ21,S-Z}.
Indeed, the integration representation of operator $D$ and the pointwise decay property
of the ground state are used
in order to get right decay orders.
Algebraic cancellations (see e.g. \eqref{Q1Qk-cancel} below)
and Calder\'{o}n's estimate in \cite{C65} are also used below.

{\it $(i)$ Estimate of $I$.}
The first term on the right-hand side of \eqref{du2-bc} can be decomposed as
\be\ba\label{dec-i}
I&=2{\rm Im}\int \ol{U}_1 DU_1\Phi_1 dx+2\sum_{k=2}^{K}{\rm Im}\int (\ol{U}_1 DU_k+\ol{U}_kDU_1)\Phi_1 dx+2\sum_{k,l=2}^{K}{\rm Im}\int \ol{U}_k DU_l\Phi_1 dx\\
&=:I_1+I_2+I_3.
\ea\ee
Since $D$ is self-adjoint, then integrating by parts and using the identity \eqref{id-fra2}
we obtain
\ben\ba
|I_1|&=|{\rm Im}\int U_1 (D(\ol{U}_1\Phi_1)-D\ol{U}_1\Phi_1) dx|\\
&=C|{\rm Im}\int U_1(x)dx\int\frac{\ol{U}_1(x+y)(\Phi_1(x+y)-\Phi_1(x))-\ol{U}_1(x-y)(\Phi_1(x)-\Phi_1(x-y))}
{|y|^2}dy|\\
&\leq C \sum\limits_{i=1}^4
   |{\rm Im} \iint_{\Omega_{1,i}}  U_1(x) \frac{\ol{U}_1(x+y)(\Phi_1(x+y)-\Phi_1(x))-\ol{U}_1(x-y)(\Phi_1(x)-\Phi_1(x-y))}{|y|^2} dxdy| \\
&=:C \sum\limits_{i=1}^4 I_{1,i},
\ea\enn
where $\bbr^2$ is partitioned into four regimes
$\Omega_{1,1}:=\{|x-x_1|\leq 3\sigma, |y|\leq \sigma\}$,
$\Omega_{1,2}:=\{|x-x_1|\geq 3\sigma, |y|\leq \sigma\}$,
$\Omega_{1,3}:=\{|x-x_1|\leq \frac{\sigma}{2}, |y| \geq  \sigma\}$
and $\Omega_{1,4}:=\{|x-x_1| \geq  \frac{\sigma}{2}, |y| \geq  \sigma\}$,
where $\sigma>0$ is defined as in \eqref{def-sig}.
Below we estimate each integral $I_{1,i}$, $1\leq i\leq 4$.

{\it $(i.1)$ Estimate of $I_1$.}
The definition of $\Phi_1$ implies that $I_{1,1}=0$.
Moreover, by Taylor's expansion, there exists some $|\t|\leq 1$ such that
for any differentiable function $f$,
\be\ba\label{Taylor}
&|f(x+y)(\Phi_1(x+y)-\Phi_1(x))-f(x-y)(\Phi_1(x)-\Phi_1(x-y))|\\
&\leq |y|^2(|\nabla f (x+\t y)| \|\nabla\Phi_1\|_{L^\9}+|f(x)|\|\nabla^2\Phi_1\|_{L^\9})
 \leq C|y|^2(|\nabla f(x+\t y)|+|f(x)|).
\ea\ee
Thus, we get
\ben\ba
I_{1,2}&\leq  C\iint_{\Omega_{1,2}}|U_1(x)|(|\nabla U_1(x+\t y)|+|U_1(x)|)dxdy\\
&\leq C\lbb_1^{-\half}\iint_{\Omega_{1,2}}|Q_1(\frac{x-\a_1}{\lbb_1})|\(\lbb_1^{-\frac{3}{2}}|\nabla Q_1(\frac{x+\t y-\a_1}{\lbb_1})|+\lbb_1^{-\half}|Q_1(\frac{x-\a_1}{\lbb_1})|\)dxdy.
\ea\enn
We may take $T_*$ close to 0 such that $|x_1 - \a_1(t)|\leq \sigma$.
Then, for $(x,y) \in \Omega_{1,2}$, we have
\ben
|x+\t y-\a_1(t)|\geq \frac{1}{3}|x-x_1|\quad and \quad |x-\a_1(t)|\geq \frac{2}{3}|x-x_1|.
\enn
So, applying the decay property \eqref{Q-decay} of $Q$ and \eqref{est-pa} we get
\ben
I_{1,2}\leq C \lbb_1^{2}\int_{|x-x_1|\geq 3\sigma}
|x-x_1|^{-4}dx\leq Ct^4.
\enn
We may take $T_*$ even closer to 0 such that $|\a_1(t)-x_1|\leq \frac{\sigma}{4}$.
Then, for $(x,y)\in \Omega_{1,3}$, we have $|x+ y-\a_1(t)|\geq \frac{\sigma}{4}$,
and by \eqref{Q-decay},
\ben\ba
I_{1,3}&\leq C\int_{|x-x_1|\leq \frac{\sigma}{2}} |U_1(x)|dx\int_{|y|\geq \sigma}\frac{|U_1(x+y)|}{|y|^2}dy\\
&\leq C\lbb_1^{-1}\int_{|x-x_1|\leq \frac{\sigma}{2}} |Q_1(\frac{x-\a_1}{\lbb_1})|dx\int_{|y|\geq \sigma}|Q_1(\frac{x+y-\a_1}{\lbb_1})||y|^{-2}dy\\
&\leq C\sigma^{-1}\lbb_1^{-1} \int_{|x-x_1|\leq \frac{\sigma}{2}} |Q_1(\frac{x-\a_1}{\lbb_1})|\(\frac{\sigma}{\lbb_1}\)^{-2}dx\leq Ct^4.
\ea\enn
Concerning the last term $I_{1.4}$,
for $|x-x_1|\geq \frac{\sigma}{2}$ we have $|x-\a_1(t)|\geq \frac{\sigma}{4}$.
Hence, it follows that
\ben\ba
I_{1,4}&\leq C\int_{|x-\a_1|\geq \frac{\sigma}{4}} |U_1(x)|dx\int_{|y|\geq \sigma}|y|^{-2}dy \leq C|t|^4.
\ea\enn
Thus,
we conclude that
\be\label{est-i1}
|I_1|\leq C|t|^3.
\ee

{\it $(i.2)$ Estimate of $I_2$.}
The second term $I_2$ in \eqref{dec-i} can be estimated by using the approximate equation \eqref{eq-q} of $Q_k$.
In fact, combining equation \eqref{eq-q},  Lemmas \ref{lem-app} and \ref{Lem-P-ve-U}
we find that
\be\label{id-qk}
DQ_k(y)=-Q_k(y)+|Q_k(y)|^2Q_k(y)+\calo(|t|\<y\>^{-2}), \quad 1\leq k\leq K.
\ee
Using the renormalization \eqref{U-Uj}
we get that for $1\leq k\leq K$,
\be\ba\label{est-lm-ii}
&{\rm Im}\int (\ol{U}_1 DU_k+\ol{U}_kDU_1)\Phi_1 dx\\
&=(\lbb_1\lbb_k)^{-\frac{1}{2}}{\rm Im}\int \lbb_k^{-1}\ol{Q}_1(\frac{x-\a_1}{\lbb_1}) DQ_k(\frac{x-\a_k}{\lbb_k})e^{i(\gamma_k-\gamma_1)}\Phi_1+\lbb_1^{-1} \ol{Q}_k(\frac{x-\a_k}{\lbb_k})DQ_1(\frac{x-\a_1}{\lbb_1})e^{i(\gamma_1-\gamma_k)}\Phi_1 dx\\
&=(\lbb_1\lbb_k)^{-\frac{1}{2}}\frac{4}{\omega t^2}{\rm Im}\int \ol{Q}_1(\frac{x-\a_1}{\lbb_1}) DQ_k(\frac{x-\a_k}{\lbb_k})e^{i(\gamma_k-\gamma_1)}\Phi_1+ \ol{Q}_k(\frac{x-\a_k}{\lbb_k})DQ_1(\frac{x-\a_1}{\lbb_1})e^{i(\gamma_1-\gamma_k)}\Phi_1 dx\\
&\quad+(\lbb_1\lbb_k)^{-\frac{1}{2}}(\lbb_k^{-1}-\frac{4}{\omega t^2}){\rm Im}\int \ol{Q}_1(\frac{x-\a_1}{\lbb_1}) DQ_k(\frac{x-\a_k}{\lbb_k})e^{i(\gamma_k-\gamma_1)}\Phi_1dx\\
&\quad+(\lbb_1\lbb_k)^{-\frac{1}{2}}(\lbb_1^{-1}-\frac{4}{\omega t^2}){\rm Im} \int\ol{Q}_k(\frac{x-\a_k}{\lbb_k})DQ_1(\frac{x-\a_1}{\lbb_1})e^{i(\gamma_1-\gamma_k)}\Phi_1 dx\\
&=:I_{2,1}+I_{2,2}+I_{2,3}
\ea\ee
The identity \eqref{id-qk} implies that
$$DQ_k(y)=-Q_k(y)+\calo(|t|\<y\>^{-2}+\<y\>^{-6}).$$
Inserting this into $I_{2,1}$ we obtain the algebraic cancellation of the leading order term
\begin{align} \label{Q1Qk-cancel}
{\rm Im}\int \ol{Q}_1(\frac{x-\a_1}{\lbb_1}) Q_k(\frac{x-\a_k}{\lbb_k})e^{i(\gamma_k-\gamma_1)}\Phi_1+ \ol{Q}_k(\frac{x-\a_k}{\lbb_k})Q_1(\frac{x-\a_1}{\lbb_1})e^{i(\gamma_1-\gamma_k)}\Phi_1 dx=0.
\end{align}
 Using this fact and similar calculations as above we get
\ben
|I_{2,1}|\leq C|t|^{3}.
\enn
To estimate $I_{2,2}$ and $I_{2,3}$, we shall apply the a prior bound \eqref{lbbn-Tt}.
Then it follows that
\ben
|I_{2,2}|+|I_{2,3}|\leq C|t|^{4-2\delta}.
\enn
Plugging these into \eqref{est-lm-ii} and summing over $k$ from 2 to $K$ we thus obtain
\be\label{est-i2}
|I_2|\leq C|t|^3.
\ee

{\it $(i.3)$ Estimate of $I_3$.}
Taking $T_*$ possibly closer to $0$
we get from \eqref{id-qk} and Lemma \ref{Lem-P-ve-U} that
\be\label{est-i3}
|I_3|\leq C\sum_{k,l=2}^{K}\lbb_k^{-\half}\lbb_l^{-\frac{3}{2}}\int_{|x-x_k|\geq 4\sigma, |x-x_l|\geq 4\sigma}|Q_k(\frac{x-\a_k}{\lbb_k})||DQ_l(\frac{x-\a_l}{\lbb_l})|dx
\leq Ct^4.
\ee
Thus, inserting estimates \eqref{est-i1}, \eqref{est-i2} and \eqref{est-i3} into \eqref{dec-i}
we arrive at
\be\label{est-i}
|I|\leq C|t|^3.
\ee

{\it $(ii)$ Estimate of $II$.}
Regarding the second term $II$ on the right-hand side of \eqref{du2-bc},
we first apply the  integration by parts formula to get
\begin{align*}
{\rm Im}\int (\ol{U} DR+\ol{R} DU)\Phi_1 dx=&\sum_{k=1}^{K}{\rm Im}\int (\ol{U}_k DR+\ol{R} DU_k)\Phi_1 dx \\
=&\sum_{k=1}^{K}{\rm Im}\int R(D(\ol{U}_k\Phi_1)-D\ol{U}_k\Phi_1) dx
=: \sum\limits_{k=1}^K II_k.
\end{align*}
Then by Lemma \ref{lem-formula}, for $1\leq k\leq K$,
\ben\ba
|II_k|&=C|{\rm Im}\int R(x)dx\int\frac{\ol{U}_k(x+y)(\Phi_1(x+y)-\Phi_1(x))-\ol{U}_k(x-y)(\Phi_1(x)-\Phi_1(x-y))}{|y|^2}dy|\\
&\leq C \sum\limits_{i=1}^3
     |{\rm Im}\iint_{\Omega_{2,i}}  R(x)  \frac{\ol{U}_k(x+y)(\Phi_1(x+y)-\Phi_1(x))-\ol{U}_k(x-y)(\Phi_1(x)-\Phi_1(x-y))}{|y|^2}  dxdy| \\
&=: C \sum\limits_{i=1}^3 II_{k,i},
\ea\enn
where
$\Omega_{2,1} := \{|x-x_1|\leq 3\sigma\bigcup |x-x_k|\leq 3\sigma,|y|\leq \sigma\}$,
$\Omega_{2,2} := \{|x-x_1|\geq 3\sigma\bigcap |x-x_k|\geq 3\sigma,|y|\leq \sigma\}$
and $\Omega_{2,3} := \{|y|\geq \sigma\}$.

It is easy to see that $|x\pm y-x_1|\leq4\sigma$ for $(x,y)\in \Omega_{2,1}$,
and thus $II_{k,1}=0$ due to the definition of $\Phi_1$.
Moreover, by \eqref{Taylor}, we get
\ben\ba
II_{k,2}&\leq \int_{|x-x_k|\geq 3\sigma}|R(x)|dx\int_{|y|\leq \sigma}|\nabla U_k(x+\t y)|+|U_k(x)|dy\\
&\leq\int_{|x-x_k|\geq 3\sigma}|R(x)|dx\int_{|y|\leq \sigma}\lbb_k^{-\frac{3}{2}}|\nabla Q_k(\frac{x+\t y-\a_k}{\lbb_k})|+\lbb_k^{-\half}|Q_k(\frac{x-\a_k}{\lbb_k})|dy.
\ea\enn
We take $T^*$ closer to $0$ such that $|x_k - \a_k(t)|\leq \sigma$
and infer that for $|x-x_k|\geq 3\sigma$ and $|y|\leq \sigma$,
\ben
|x+\t y-\a_k(t)|\geq \frac{1}{3}|x-x_k|\quad and\quad  |x-\a_k(t)|\geq \frac{2}{3}|x-x_k|.
\enn
Hence, applying \eqref{Q-decay} we get
\ben
II_{k,2}\leq C\sigma\lbb_k^{\half}\int_{|x-x_k|\geq3\sigma}|R(x)||x-x_k|^{-2}dx\leq Ct\|R\|_{L^2}.
\enn
The last term $II_{k,3}$ can be estimated by
\ben\ba
II_{k,3}\leq C\int_{|y|\geq \sigma}|y|^{-2}dy\int |R(x)|(|U_k(x+y)|+|U_k(x-y)|) dx
\leq C\sigma^{-1}\|R\|_{L^2}.
\ea\enn
Thus,
we conclude that
\be\label{est-ii}
|II|\leq C\|R\|_{L^2}.
\ee

{\it $(iii)$ Estimate of $III$.}
At last, we consider the third term on the right-hand side of \eqref{du2-bc}.
By the integration by parts formula,
\ben
III=2{\rm Im}\int \ol{R} DR\Phi_1 dx={\rm Im}\int  R(D(\ol{R}\Phi_1)-D\ol{R}\Phi_1) dx.
\enn
In view of the definition of $\Phi_1$,
we apply the Calder\'on estimate in Lemma \ref{lem-l2-pro2} to get
\be\label{est-comm}
\|D(\ol{R}\Phi_1)-D\ol{R}\Phi_1\|_{L^2}\leq C\|R\|_{L^{2}},
\ee
which yields
\be\label{est-iii}
|III|\leq C\|R\|_{L^2}^2.
\ee

Now, combining estimates \eqref{est-i}, \eqref{est-ii} and \eqref{est-iii} altogether we get
\ben
|\frac{d}{dt}\int |u|^2\Phi_1dx|\leq C(|t|^3+\|R\|_{L^2}+\|R\|_{L^2}^2).
\enn
Then, integrating from $t$ to $T$ and using the a prior bound \eqref{R-Tt},
the second part of the right-hand side of \eqref{uL2-phi} can be estimated by
\be\label{est-loc-2}
|\int |u(t)|^2\Phi_j dx-\int |u(T)|^2\Phi_j dx|\leq C|t|^{4-\delta}.
\ee
Finally,  inserting \eqref{est-loc-1} and \eqref{est-loc-2} into \eqref{uL2-phi}
we obtain  \eqref{est-mass-local} and finish the proof of Lemma \ref{lem-mass-local}.
\hfill $\square$

Next we obtain the refined estimate of the parameter $v_k$,
by analyzing the \emph{localized momentum}
defined by  ${\rm Im}\int \na u(t)\ol{u}(t)\Phi_kdx$.

\begin{lemma}[Refined estimate of $v_k$]  \label{lem-lcomom}
For all $t\in [T_*, T]$ and $1\leq k\leq K$, we have
\be \label{vk-lambdak}
|\frac{v_k(t)}{\lbb_k(t)}-1|=\calo(|t|^{2-\delta}).
\ee
\end{lemma}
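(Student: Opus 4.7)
The plan is to follow the localized-mass strategy of Lemma \ref{lem-mass-local}, applied instead to the \emph{localized momentum}
\ben
P_k(t) := {\rm Im}\int_{\R} \nabla u(t,x)\, \ol{u(t,x)}\, \Phi_k(x)\, dx,
\enn
and to derive \eqref{vk-lambdak} in three steps: (i) a spatial expansion relating $P_k(t)$ to $v_k(t)/\lbb_k(t)$; (ii) a temporal control showing that $P_k$ varies only slightly on $[t,T]$; and (iii) a comparison at $t=T$, where the prescribed parameters \eqref{PjT} combined with $u(T)=U(T)$ give $v_k(T)/\lbb_k(T)=1$.

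For step (i), I insert $u = U + R$ and use the renormalization \eqref{U-Uj} together with the expansion \eqref{Q-asy} of $Q_k$. The dominant contribution is
\ben
{\rm Im}\int \nabla U_k\, \ol{U_k}\, dx = \lbb_k^{-1}\,{\rm Im}\!\int Q_k'(y)\,\ol{Q_k(y)}\, dy = -2 c_1\, \frac{v_k}{\lbb_k} + \calo(|t|^2),
\enn
where $c_1 := \int G_1\, Q'\, dy$ is extracted after using the parity structure ($Q, S_1, S_2$ even; $Q', G_1, G_2, S_3$ odd) to cancel the $\calo(b_k)$, $\calo(b_k^2)$ and $\calo(b_k^3)$ cross terms. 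The nondegeneracy $c_1 \neq 0$ comes from the identity $L_- G_1 = -Q'$ (read off from the order-$v_k$ terms of \eqref{eq-q}); combined with $G_1 \perp Q$ this gives $c_1 = -\<L_- G_1, G_1\> < 0$ by the positivity of $L_-$ on $Q^\perp$. The remaining pieces of $P_k(t)$ are controlled as follows. Off-diagonal bubble-bubble interactions are $\calo(t^4)$ by the decoupling bounds of Lemma \ref{lem-dep}. The linear-in-$R$ contributions use the orthogonality ${\rm Im}\<\nabla U_k, R\> = 0$ from \eqref{ortho-cond-Rn-wn} to rewrite the principal piece as ${\rm Im}\int \nabla U_k\, \ol R\, (\Phi_k - 1)\, dx$, which is supported where $|x - x_k| \geq 4\sigma$ and $U_k$ has quadratically decaying tails $|U_k(x)| \lesssim \lbb_k^{3/2}|x-x_k|^{-2}$; the symmetric piece ${\rm Im}\int \nabla R\, \ol{U_k}\, \Phi_k\, dx$ is handled by integration by parts, producing a boundary term with $\Phi_k'$ plus a term already controlled. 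The purely quadratic piece ${\rm Im}\int \nabla R\, \ol R\, \Phi_k\, dx$ is bounded via integration by parts and the Calder\'on commutator estimate (Lemma \ref{lem-l2-pro2}) by $C\|D^{\half} R\|_{L^2}^2 \lesssim |t|^{4-2\delta}$. In sum, $P_k(t) = -2c_1\, v_k(t)/\lbb_k(t) + \calo(|t|^{2-\delta})$.

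For step (ii), a direct computation using \eqref{equa} and the self-adjointness of $D$ yields
\ben
\partial_t P_k(t) = {\rm Re}\int u\, [D, \Phi_k]\nabla \ol u\, dx \;-\; \tfrac{1}{2}\int |u|^4\, \Phi_k'\, dx.
\enn
Both contributions are essentially supported in the transition region $\{4\sigma \leq |x-x_k| \leq 8\sigma\}$, where all bubbles have quadratically decaying tails. For the commutator term, I split it via the integral representation \eqref{id-fra2} of $D$ into a near part (controlled by Calder\'on's estimate and the smallness of $\Phi_k'$) and a far-field part (controlled by $\|u\|_{L^2}$ together with $|y|^{-2}$ integrability), exactly as in the proof of Lemma \ref{lem-mass-local}. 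Combined with the pointwise decay of $U_k$, Gagliardo--Nirenberg $\|R\|_{L^4}^4 \lesssim \|R\|_{L^2}^2\|D^{\half}R\|_{L^2}^2$ on the $|u|^4$ term, and the bootstrap bounds \eqref{R-Tt}, this yields $|\partial_t P_k(t)| \lesssim |t|^{1-\delta}$. Integrating over $[t, T]$ gives $|P_k(t) - P_k(T)| \lesssim |t|^{2-\delta}$.

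For step (iii), at $t = T$ the boundary condition $u(T) = U(T)$ removes the remainder, and \eqref{PjT} gives $v_k(T)/\lbb_k(T) = 1$, so $P_k(T) = -2c_1 + \calo(|T|^{2-\delta})$. Comparing with the spatial expansion at time $t$,
\ben
-2 c_1 \lf(\frac{v_k(t)}{\lbb_k(t)} - 1\rt) = \bigl(P_k(t) - P_k(T)\bigr) + \calo(|t|^{2-\delta}) = \calo(|t|^{2-\delta}),
\enn
and \eqref{vk-lambdak} follows from $c_1 \neq 0$. The hardest part is step (i), specifically the linear-in-$R$ contributions in the nonlocal setting, where $\nabla$ and $\Phi_k$ interact nontrivially with $D$ through commutators; one must simultaneously exploit the orthogonality \eqref{ortho-cond-Rn-wn}, the quadratic decay of $Q$ on the support of $\Phi_k - 1$, and Calder\'on-type bounds. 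The analogous nonlocal commutator control in step (ii) is structurally similar and relies on the same integral-representation-plus-Calder\'on combination used in the proof of Lemma \ref{lem-mass-local}.
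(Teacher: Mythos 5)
Your proposal follows essentially the same localized-momentum strategy as the paper: a spatial expansion isolating $v_k/\lbb_k$ with leading coefficient $p_1=2\<L_-G_1,G_1\>=-2\int Q'G_1>0$, a temporal bound $|\partial_t P_k|=\calo(|t|^{1-\delta})$ obtained from the commutator $[D,\Phi_k]$ via near/far splitting and Calder\'on/interpolation estimates, and a comparison at $t=T$ where $u(T)=U(T)$. A few minor bookkeeping slips do not affect the argument: the cross-bubble interactions in the spatial expansion are in fact only $\calo(|t|^2)$ (not $\calo(|t|^4)$, since one factor is a gradient $\na U_l\sim\lbb_l^{-3/2}$ near $x_l$); the piece ${\rm Im}\int\na R\,\ol R\,\Phi_k\,dx$ is controlled by Lemma~\ref{lem-inter} rather than by the Calder\'on commutator estimate; and the nonlocal commutator with $\Phi_k$ enters only in the time-derivative step (ii), not in the spatial expansion you flag in step (i).
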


\begin{proof}
Without loss of generality, we prove \eqref{vk-lambdak} for $k=1$.
By \eqref{u-dec}, we first expand
the localized momentum around the profile $U$ to get
\be\ba\label{est-v-1}
{\rm Im}\int \na u(t)\ol{u}(t)\Phi_1dx
=& {\rm Im}\int \na U(t)\ol{U}(t)\Phi_1dx+{\rm Im}\int \na R(t)\ol{R}(t)\Phi_1dx\\
&+{\rm Im}\int (\na U(t)\ol{R}(t)+\na R(t)\ol{U}(t))\Phi_1dx.
\ea\ee
By  Lemmas \ref{lem-dep} and \ref{Lem-P-ve-U}, we have that for $t\in [T_*, T]$
\be\label{est-v-2}
|{\rm Im}\int \na U(t)\ol{U}(t)\Phi_1dx-{\rm Im}\int \na U_1(t)\ol{U}_1(t)dx|
\leq C|t|^3,
\ee
and
\ben
|{\rm Im}\int (\na U(t)\ol{R}(t)+\na R(t)\ol{U}(t))\Phi_1dx-2{\rm Im}\int \na U_1(t)\ol{R}(t)d|
\leq C|t|\|R\|_{L^2},
\enn
which along with the orthogonality conditions \eqref{ortho-cond-Rn-wn} yields that
\be\label{est-v-3}
|{\rm Im}\int (\na U(t)\ol{R}(t)+\na R(t)\ol{U}(t))\Phi_1dx|
\leq C|t|\|R\|_{L^2}.
\ee
Moreover, it follows from Lemma \ref{lem-inter} that
\be\label{est-v-4}
|{\rm Im}\int \na R(t)\ol{R}(t)\Phi_1dx|\leq C\|R\|_{H^\half}^2.
\ee
Thus, inserting \eqref{est-v-2}-\eqref{est-v-4} into \eqref{est-v-1} we obtain
\be\label{est-v-5}
{\rm Im}\int \na u(t)\ol{u}(t)\Phi_1dx
-{\rm Im}\int \na U_1(t)\ol{U}_1(t)dx=
\calo(|t|\|R\|_{L^2}+\|R\|_{H^\half}^2+|t|^3).
\ee

Next, we treat the L.H.S. of \eqref{est-v-5}.
Note that, on one hand, we have
\be\ba\label{mom-1}
&|{\rm Im}\int \na U_1(t)\ol{U}_1(t)dx-{\rm Im}\int \na U_1(T)\ol{U}_1(T)dx|\\
&\leq |{\rm Im}\int \na u(t)\ol{u}(t)\Phi_1dx-{\rm Im}\int \na u(T)\ol{u}(T)\Phi_1dx|
+|{\rm Im}\int \na u(t)\ol{u}(t)\Phi_1dx-{\rm Im}\int \na U_1(t)\ol{U}_1(t)dx|\\
&\quad+|{\rm Im}\int \na u(T)\ol{u}(T)\Phi_1dx-{\rm Im}\int \na U_1(T)\ol{U}_1(T)dx|\\
&\leq |{\rm Im}\int \na u(t)\ol{u}(t)\Phi_1dx-{\rm Im}\int \na u(T)\ol{u}(T)\Phi_1dx|
+\calo(|t|\|R\|_{L^2}+\|R\|_{H^\half}^2+|t|^3).
\ea\ee
On the other hand, by \eqref{Q-asy}, Lemma \ref{Lem-P-ve-U}
and the fact that $\half\<S_1,S_1\>+\<Q,S_2\>=0$, we have
\be\label{mom-2}
{\rm Im}\int \na U_1(t)\ol{U}_1(t)dx
=\lbb_1^{-1}(t){\rm Im}\int \na Q_1(t)\ol{Q}_1(t)dx
=p_1\frac{v_1(t)}{\lbb_1(t)}+\calo(t^2)
\ee
where $p_1:=2\<L_-G_1,G_1\> >0$.
In particular, by \eqref{PjT}, it follows that
\be\label{mom-3}
{\rm Im}\int \na U_1(T)\ol{U}_1(T)dx=p_1\frac{v_1(T)}{\lbb_1(T)}+\calo(T^2)=p_1+\calo(t^2).
\ee

Thus, combining \eqref{mom-1}, \eqref{mom-2} and \eqref{mom-3} together
we arrive at
\be\label{est-v-6}
p_1|\frac{v_1(t)}{\lbb_1(t)}-1|\leq |{\rm Im}\int \na u(t)\ol{u}(t)\Phi_1dx-{\rm Im}\int \na u(T)\ol{u}(T)\Phi_1dx|+\calo(|t|\|R\|_{L^2}+\|R\|_{H^\half}^2+|t|^2).
\ee

The evolution of the localized momentum on the right-side of \eqref{est-v-6}
can be estimated by using equation \eqref{equa}.
Actually, integrating by parts we get
\be\ba\label{est-pv}
&\frac{d}{dt}{\rm Im}\int \na u(t)\ol{u}(t)\Phi_1dx
=-{\rm Re}\int \na \ol{u}(D(u\Phi_1)-Du\Phi_1)dx
-\half\int  |u|^4\na\Phi_1dx\\
&=-\sum_{k=1}^{K}{\rm Re}\int \na \ol{U}(D(U_k\Phi_1)-DU_k\Phi_1)dx-\sum_{k=1}^{K}{\rm Re}\int \na \ol{R}(D(U_k\Phi_1)-DU_k\Phi_1)dx\\
&\quad-\sum_{k=1}^{K}{\rm Re}\int \na \ol{U}_k(D(R\Phi_1)-DR\Phi_1)dx-{\rm Re}\int \na \ol{R}(D(R\Phi_1)-DR\Phi_1)dx-\half\int  |u|^4\na\Phi_1dx\\
&=: \sum\limits_{i=1}^5 J_i
\ea\ee
We estimate the above five terms separately below.

{\it $(i)$ Estimate of $J_1$.}
Let us first estimate $J_1$. For every $1\leq k\leq K$, we have
\be\ba\label{est-pv-1}
&{\rm Re}\int \na \ol{U}(D(U_k\Phi_1)-DU_k\Phi_1)dx\\
&={\rm Re}\int \na \ol{U}_k(D(U_k\Phi_1)-DU_k\Phi_1)dx
+\sum_{l\neq k}{\rm Re}\int \na \ol{U}_l(D(U_k\Phi_1)-DU_k\Phi_1)dx\\
&=:J_{11}+J_{12}.
\ea\ee
The first term $J_{11}$ can be estimated as in the proof of \eqref{est-i1} in Lemma \ref{lem-mass-local}.
In fact, by formula \eqref{id-fra2},  we get
\ben\ba
 J_{11}&=C|{\rm Re}\int \na U_k(x)dx\int\frac{\ol{U}_k(x+y)(\Phi_1(x+y)-\Phi_1(x))-\ol{U}_k(x-y)(\Phi_1(x)-\Phi_1(x-y))}
{|y|^2}dy|\\
& \leq C \sum\limits_{i=1}^4 |{\rm Re} \iint_{\wt \Omega_{1,i}}
    \na U_k(x) \frac{\ol{U}_k(x+y)(\Phi_1(x+y)-\Phi_1(x))-\ol{U}_k(x-y)(\Phi_1(x)-\Phi_1(x-y))}{|y|^2} dxdy |,
\ea\enn
where we set
$\wt \Omega_{1,1}:= \{|x-x_k|\leq 3\sigma,|y|\leq \sigma\}$,
$\wt \Omega_{1,2}:= \{|x-x_k|\geq 3\sigma,|y|\leq \sigma\}$,
$\wt \Omega_{1,3}:= \{|x-x_k|\leq \frac{\sigma}{2},|y|\geq \sigma\}$,
and $\wt \Omega_{1,4}:= \{|x-x_k|\geq \frac{\sigma}{2},|y|\geq \sigma\}$.
The above four integrals can be estimated by following
the arguments as in the proof of \eqref{est-i1} from line to line.
Note also that, compared with \eqref{est-i1}, the derivative contributes an additional  $|t|^{-2}$ fact.
Hence, we get that
\ben
|J_{11}|\leq C|t|.
\enn
The second term $J_{12}$ can be controlled as in the proof of \eqref{est-i2}.
An application of the integration by parts formula yields that, for $l\neq k$,
\ben
{\rm Re}\int \na \ol{U}_l(D(U_k\Phi_1)-DU_k\Phi_1)dx
=-{\rm Re}\int (D\ol{U}_l\na U_k +\na \ol{U}_lDU_k) \Phi_1dx
-{\rm Re}\int D \ol{U}_lU_k\na\Phi_1dx.
\enn
By \eqref{Q-decay}, we have $\na Q=\calo(\<y\>^{-3})$. Then inserting  the identity \eqref{id-qk} into the above formula,
we get
\be\ba\label{est-mom-re}
&{\rm Re}\int (D\ol{U}_l\na U_k +\na \ol{U}_lDU_k) \Phi_1dx\\
&=(\lbb_l\lbb_k)^{-\frac{3}{2}}e^{i(\gamma_k-\gamma_l)}{\rm Re}\int D\ol{Q}_l(\frac{x-\a_l}{\lbb_l}) \na Q_k(\frac{x-\a_k}{\lbb_k})\Phi_1+ \na \ol{Q}_l(\frac{x-\a_l}{\lbb_l})DQ_k(\frac{x-\a_k}{\lbb_k})\Phi_1 dx\\
&=(\lbb_l\lbb_k)^{-\frac{3}{2}}e^{i(\gamma_k-\gamma_l)}{\rm Re}\int \ol{Q}_l(\frac{x-\a_l}{\lbb_l}) \na Q_k(\frac{x-\a_k}{\lbb_k})\Phi_1+ \na \ol{Q}_l(\frac{x-\a_l}{\lbb_l})Q_k(\frac{x-\a_k}{\lbb_k})\Phi_1 dx\\
&\quad+\calo\(\lbb^{-3}\int |t|\<\frac{x-\a_l}{\lbb_l}\>^{-2} \<\frac{x-\a_k}{\lbb_k}\>^{-3}+\<\frac{x-\a_l}{\lbb_l}\>^{-6} \<\frac{x-\a_k}{\lbb_k}\>^{-3}dx\).
\ea\ee
Thanks to the integration by parts formula, we have
\ben\ba
&{\rm Re}\int \ol{Q}_l(\frac{x-\a_l}{\lbb_l}) \na Q_k(\frac{x-\a_k}{\lbb_k})\Phi_1+ \na \ol{Q}_l(\frac{x-\a_l}{\lbb_l})Q_k(\frac{x-\a_k}{\lbb_k})\Phi_1 dx\\
&=\frac{\lbb_l-\lbb_k}{\lbb_l}\int \na \ol{Q}_l(\frac{x-\a_l}{\lbb_l})Q_k(\frac{x-\a_k}{\lbb_k})\Phi_1 dx-\lbb_k{\rm Re}\int \ol{Q}_l(\frac{x-\a_l}{\lbb_l})  Q_k(\frac{x-\a_k}{\lbb_k})\na\Phi_1dx.
\ea\enn
By using a prior estimate \eqref{lbbn-Tt} and Lemma \ref{lem-dep}, we get
\ben
|{\rm Re}\int \ol{Q}_l(\frac{x-\a_l}{\lbb_l}) \na Q_k(\frac{x-\a_k}{\lbb_k})\Phi_1+ \na \ol{Q}_l(\frac{x-\a_l}{\lbb_l})Q_k(\frac{x-\a_k}{\lbb_k})\Phi_1 dx|\leq C(\lbb^{4-\delta}+\lbb^4).
\enn
Thus the first term in the last equality of \eqref{est-mom-re} can be bounded by $\calo(|t|^{2-2\delta})$.
And by Lemma \ref{lem-dep} the remainder in the last equality of \eqref{est-mom-re} can be bounded by $\calo(|t|)$. Thus by inserting these estimates into \eqref{est-mom-re},  we can  get
\ben
|{\rm Re}\int (D\ol{U}_l\na U_k +\na \ol{U}_lDU_k) \Phi_1dx|\leq C|t|.
\enn
Similarly, by using \eqref{id-qk} and Lemma \ref{lem-dep} again, we have
\ben\ba
|{\rm Re}\int D \ol{U}_lU_k\na\Phi_1dx|\leq \lbb_l^{-\frac32}\lbb_k^{-\frac{1}{2}}\int (1+|t|)\<\frac{x-\a_l}{\lbb_l}\>^{-2} \<\frac{x-\a_k}{\lbb_k}\>^{-2}+\<\frac{x-\a_l}{\lbb_l}\>^{-6} \<\frac{x-\a_k}{\lbb_k}\>^{-2}dx\leq C|t|^2.
\ea\enn
Hence, we can obtain
\ben
|J_{12}|\leq C|t|.
\enn
Thus, returning to \eqref{est-pv-1} and summing over $k$ from $1$ to $K$
we obtain the bound
\be\label{est-pv-2}
|J_1|\leq C|t|.
\ee

{\it $(ii)$ Estimate of $J_2+ J_3$.}
Integrating by parts and using the formula \eqref{id-fra2} again
we derive that
\ben\ba
&|{\rm Re}\int \na \ol{R}(D(U_k\Phi_1)-DU_k\Phi_1)dx+{\rm Re}\int \na \ol{U}_k(D(R\Phi_1)-DR\Phi_1)dx|\\
=&|2{\rm Re}\int  \ol{R}(D(\na U_k\Phi_1)-D\na U_k\Phi_1)dx+{\rm Re}\int  \ol{R}(D(U_k\na\Phi_1)-DU_k\na\Phi_1)dx|.
\ea\enn
The above integrations can be estimated by arguing as in the proof of \eqref{est-ii}.
Similarly, compared with \eqref{est-ii},
an additional  $|t|^{-2}$ is lost here.
Thus, we get
\be\label{est-pv-3}
   |J_2+J_3|\leq \sum_{k=1}^{K}|{\rm Re}\int \na \ol{R}(D(U_k\Phi_1)-DU_k\Phi_1)dx
   +{\rm Re}\int \na \ol{U}_k(D(R\Phi_1)-DR\Phi_1)dx|
   \leq C\frac{\|R\|_{L^2}}{t^2}.
\ee

{\it $(iii)$ Estimate of $J_4$.}
The fourth term $J_4$ can be estimated by applying \eqref{est-comm} and standard interpolation theory. In fact, letting $Tg:=\nabla ((D(g\Phi_1)-Dg\Phi_1))$
and using the integration by parts formula we have
\ben
\ol{\<f, Tg\>}=-\int \na \overline{f}(D(g\Phi_1)-Dg\Phi_1)dx.
\enn
 On one hand, from \eqref{est-comm}
 (or the Calder\'on estimate in Lemma \ref{lem-l2-pro2}), it follows that
\ben
|\<f, Tg\>|\leq C\|f\|_{H^1}\|g\|_{L^2}.
\enn
On the other hand, integrating by parts and using the commutator formula $[D, \na]=0$,
we get
\ben
\overline{\<f, Tg\>}=\int  \ol{f}(D(\na g\Phi_1+g\na\Phi_1)-D\na g\Phi_1-Dg\na\Phi_1)dx.
\enn
Using Lemma \ref{lem-l2-pro2} again,
we estimate
\ben\ba
|\<f, Tg\>|&\leq |\int  \ol{f}(D(\na g\Phi_1)-D\na g\Phi_1)dx|+|\int  \ol{f}(D(g\na\Phi_1)-Dg\na\Phi_1)dx|\\
&\leq C(\|f\|_{L^2}\|\na g\|_{L^{2}}+\|f\|_{L^2}\| g\|_{L^2})\\
&\leq C\|f\|_{L^2}\|g\|_{H^{1}}.
\ea\enn
Hence, it follows from standard interpolation arguments (see, e.g., \cite[Lemma 23.1]{Tar}) that
\ben
|\<f, Tg\>|\leq C\|f\|_{H^\half}\|g\|_{H^{\half}}.
\enn
Taking $f=g=R$, it follows that $J_4={\rm Re}\<f,Tg\>$, thus we  obtain
\be\label{est-pv-4}
|J_4| = |{\rm Re}\<R,TR\>|\leq C\|R\|^2_{H^\half}.
\ee

{\it $(iv)$ Estimate of $J_5$.}
Using  \eqref{Q-decay} and the Sobolev embeddings we get
\be\ba\label{est-pv-5}
|J_5|\leq C(\sum_{k=1}^{K}\int  |U_k|^4\na\Phi_1dx+  \|R\|_{L^4}^4)\leq C(|t|^{12}+  \|R\|_{H^\half}^4).
\ea
\ee

Thus, inserting estimates \eqref{est-pv-2}, \eqref{est-pv-3}, \eqref{est-pv-4} and \eqref{est-pv-5} into \eqref{est-pv}
we get
\ben
|\frac{d}{dt}{\rm Im}\int \na u(t)\ol{u}(t)\Phi_1dx|\leq  C(\frac{\|R(t)\|_{L^2}}{|t|^2}+\|R(t)\|_{H^\half}+|t|),
\enn
which along with the a prior estimate \eqref{R-Tt} implies that
\ben\ba
|{\rm Im}\int \na u(t)\ol{u}(t)\Phi_1dx-{\rm Im}\int \na u(T)\ol{u}(T)\Phi_1dx|
&\leq \int_{t}^{T}|\frac{d}{ds}{\rm Im}\int \na u(s)\ol{u}(s)\Phi_1dx|ds\\
&\leq C\int\frac{\|R(s)\|_{L^2}}{|s|^2}+\|R(s)\|_{H^\half}+|s|ds\\
&\leq C|t|^{2-\delta}.
\ea\enn

Therefore,
plugging this into \eqref{est-v-6} we conclude that
\ben
p_1|\frac{v_1(t)}{\lbb_1(t)}-1|\leq C|t|^{2-\delta},
\enn
which yields the refined estimate \eqref{vk-lambdak} and finishes the proof.
\end{proof}

\section{Generalized energy functional: monotonicity and coercivity}  \label{Sec-Gen-Energy}

This section is devoted to study the important generalized energy \eqref{def-I},
which incorporates an energy part and a localized virial part.
The corresponding monotonicity and coercivity properties are the two key ingredients
to derive the bootstrap estimates  of the remainder \eqref{wn-Tt-boot-2},
which enables to integrate the flow backwards
from the singularity.

It should be mentioned that,
as in the context of NLS \cite{S-Z}, the generalized energy
also incorporates the localization functions $\{\Phi_k\}$
in an appropriate way,
which is different from  the single-bubble case in \cite{K-L-R}.

To be precise, let $\chi(x): \R\rightarrow\R$ be a smooth even function,
satisfying that
$\chi^\prime(x) = x$ if $0\leq x\leq 1$,
$\chi^\prime(x) = 3- e^{-x}$ if $x\geq2$,
and the convexity condition
$\chi^{\prime\prime}(x)\geq0$ for $x\geq0$.
Let $\chi_A(x) :=A^2\chi(\frac{x}{A})$ for $A>0$, and
$f(u):= |u|^{2} u$,
 $F(u):= \frac{1}{4} |u|^{4}$.

We define the generalized energy by
\begin{align} \label{def-I}
\mathfrak{I}(R) := &\frac{1}{2}\int |D^\half R|^2+\sum_{k=1}^K\frac{1}{\lambda_{k}} |R|^2 \Phi_kdx
           -{\rm Re}\int F(u)-F(U)-f(U)\ol{R}dx \nonumber \\
&+\sum_{k=1}^K\frac{b_{k}}{2}{\rm Im} \int (\nabla\chi_A) (\frac{x-\alpha_{k}}{\lambda_{k}})\cdot\nabla R \ol{R}\Phi_kdx.
\end{align}

\subsection{Monotonicity of the generalized energy}  \label{Subsec-Mon-Gen-Energy}
The crucial monotonicity property of the generalized energy
is formulated in Proposition \ref{prop-I-mono} below.

\begin{proposition} [Monotonicity of the generalized energy] \label{prop-I-mono}
Assuming the assumptions in Theorem \ref{Thm-u-Boot} to hold. Then for all $t\in [T_*, T]$, there exist positive constants $C$ and $C(A)$ such that, for $A$ large enough,
\ben\ba
  \frac{d\mathfrak{I}}{dt}
\geq C\frac{\|R(t)\|_{L^2}^2}{|t|^3} +\calo\(C(A)\(\ln(2+\|R\|_{H^\half}^{-1})\)^{\half}X(t)+|t|^{3-\delta}\),
  \ea\enn
where $X(t)$ is the quantity of the remainder given by \eqref{X-def}.
\end{proposition}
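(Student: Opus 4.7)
\medskip

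\textbf{Proof plan for Proposition \ref{prop-I-mono}.}
The plan is to differentiate $\mathfrak{I}(R)$ term by term, substitute the $R$-equation \eqref{equa-R} for $\partial_t R$, and identify the coercive contribution together with the controllable errors. I will decompose $\mathfrak{I}=\mathfrak{I}_1+\mathfrak{I}_2+\mathfrak{I}_3$, where $\mathfrak{I}_1$ is the quadratic-plus-nonlinear ``energy'' part, $\mathfrak{I}_2=\sum_k \lambda_k^{-1}\int |R|^2\Phi_k$ is the localized mass, and $\mathfrak{I}_3$ is the localized virial correction. The leading positive contribution is expected from $\tfrac{d}{dt}\mathfrak{I}_2$: using $\tfrac{d}{dt}\lambda_k^{-1}=-\dot\lambda_k/\lambda_k^2$ together with the modulation bound $\dot\lambda_k=-b_k+\mathcal{O}(Mod/\lambda_k)$ from Proposition \ref{Prop-Mod-bdd} and the a priori bounds $\lambda_k\sim t^2$, $b_k\sim|t|$, one gets
\[
   \tfrac{d}{dt}\mathfrak{I}_2 \;\geq\; C\sum_k \tfrac{b_k}{\lambda_k^2}\!\int|R|^2\Phi_k \;+\; (\text{transport+error})
   \;\gtrsim\; \frac{\|R\|_{L^2}^2}{|t|^3}\;+\;(\text{errors}),
\]
which is the sought-after coercive term. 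The cross term $\tfrac{2}{\lambda_k}\mathrm{Re}\<R,\partial_t R\>\Phi_k$ is handled by plugging in $\partial_t R=-iDR+i(|u|^2u-|U|^2U)+i\eta$ from \eqref{equa-R}; the self-adjoint part of $D$ gives a commutator $[D,\Phi_k]$ that is $L^2$-bounded by the Calder\'on estimate (Lemma \ref{lem-l2-pro2}), while the nonlinear contribution is absorbed into the energy derivative below.

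Next, for $\mathfrak{I}_1$ I would write it as the Taylor remainder of $E(u)-E(U)$ about $U$, so that
\[
   \tfrac{d}{dt}\mathfrak{I}_1 \;=\; \mathrm{Re}\<DR+f(U+R)-f(U)-f'(U)R,\,\partial_t R\> \;-\; \mathrm{Re}\<f'(U)\partial_t U, R\>\!\cdot\!(\text{Taylor terms}),
\]
and then substitute $\partial_t R$ from \eqref{equa-R}. The leading quadratic in $\partial_t U$ generates, after using \eqref{exp-eta} and the orthogonality conditions \eqref{ortho-cond-Rn-wn}, the classical virial expression $-\sum_k b_k\lambda_k^{-2}\<\mathcal{L}_k R,\Lambda_k R\>$ (localized to bubble $k$ via $\Phi_k$), where $\mathcal{L}_k$ is the linearized operator around $U_k$. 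The virial correction $\mathfrak{I}_3$ is tailored precisely so that its time derivative cancels the transport piece $-\sum_k b_k\lambda_k^{-2}\<\Lambda R,R\>$ coming from $\mathfrak{I}_2$ and converts $-\<\mathcal{L}_k R,\Lambda_k R\>$ into a definite quadratic form on the orthogonal complement of the null space of $L_\pm$, modulo the unstable direction $\mathrm{Re}\<U_k,R\>$ controlled by Lemma \ref{lem-mass-local}. The nonlinear remainder $F(u)-F(U)-f(U)R-\tfrac12 f'(U)R\cdot R$ is $\mathcal{O}(\|R\|_{L^4}^4)$, controlled by Sobolev embedding with a $\log$ loss to produce the factor $(\ln(2+\|R\|_{H^{1/2}}^{-1}))^{1/2} X(t)$.

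The main technical obstacle, and where most of the effort will go, is handling the nonlocal operator $D$ against the cutoffs $\Phi_k$ and $\nabla\chi_A(\cdot/\lambda_k)$. Unlike the NLS case, commutators $[D,\Phi_k]$, $[D,\nabla\chi_A(\tfrac{\cdot-\alpha_k}{\lambda_k})]$ are not local, and a priori produce only $L^2$-bounded errors with no gain in $|t|$. To get sharp decay I would use (i) the integral representation $D f(x)=c\,\mathrm{p.v.}\!\int\!\tfrac{f(x)-f(y)}{|x-y|^2}\,dy$ to extract pointwise kernel estimates and exploit the separation $|x_k-x_l|\gtrsim\sigma$ and the algebraic decay \eqref{Q-decay} of $Q$, decoupling bubbles with the sharp $\mathcal{O}(t^4)$ rate from Lemma \ref{lem-dep}; (ii) the Calder\'on commutator estimate Lemma \ref{lem-l2-pro2} to bound $[D,\Phi_k]R$ and $[D,\nabla\chi_A(\tfrac{\cdot-\alpha_k}{\lambda_k})]R$ in $L^2$ by $\|R\|_{L^2}$, with a $\lambda_k^{-1}$ factor for the $\chi_A$-commutator that is absorbed by the $b_k\lambda_k^{-1}$ prefactor; and (iii) the bootstrap bounds \eqref{R-Tt}-\eqref{thetan-Tt} to convert the resulting $\|R\|_{L^2}$, $\|D^{1/2}R\|_{L^2}$ factors into the admissible errors $C(A)(\ln(2+\|R\|_{H^{1/2}}^{-1}))^{1/2} X(t)+|t|^{3-\delta}$.

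Finally, I would collect all error contributions: the modulation errors $\mathcal{O}(Mod\,X)$ controlled by \eqref{Mod-w-lbb}; the interaction errors from $\eta$ controlled by \eqref{eta-L2}, yielding the $|t|^{3-\delta}$ remainder after pairing with $\|R\|_{L^2}\lesssim|t|^{3-\delta}$; the boundary/large-$A$ terms from $\chi_A$ absorbed by choosing $A$ sufficiently large; and the nonlinear $\|R\|_{L^4}^4$ term producing the logarithmic loss. The coercivity of the resulting quadratic form on the constrained subspace \eqref{ortho-cond-Rn-wn} is inherited from spectral properties of $L_\pm$ (Appendix) and combined with Lemma \ref{lem-mass-local} to handle the sixth unstable direction $\mathrm{Re}\<U_k,R\>$.
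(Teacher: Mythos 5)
Your plan has the right overall architecture (differentiate $\mathfrak{I}$, insert the $R$-equation, extract a coercive quadratic form, and control the rest by commutator, modulation and bootstrap bounds), and at a high level this matches the paper's strategy. However, two of your claims are wrong or elide exactly the step where the bulk of the work lives. The coercive bound does \emph{not} come from the localized--mass prefactor derivative alone: the energy and virial parts contribute quadratic-in-$R$ terms of the same order $b_k\lambda_k^{-2}\|R\|_{L^2}^2$ and of indefinite sign, namely $-\tfrac{b_k}{2\lambda_k}\int(2|U_k|^2|R_k|^2+\bar U_k^2 R_k^2)$ and $\tfrac{b_k}{2\lambda_k}\langle\Delta\chi_A(\tfrac{\cdot-\alpha_k}{\lambda_k}) R_k,DR_k\rangle + b_k\langle\nabla\chi_A(\tfrac{\cdot-\alpha_k}{\lambda_k})\nabla R_k,DR_k\rangle$; positivity of the sum is precisely what is at stake. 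Obtaining it requires first rewriting the nonlocal virial terms via the functional-calculus/Poisson-extension identity (the $R_{k,s}$ representation from \cite[Lemma~6.1]{K-L-R}), which converts them into $\int_0^{\infty}\sqrt{s}\int\chi''(\tfrac{y}{A})|\nabla R_{k,s}|^2 - \tfrac{1}{4\lambda_k}\int_0^{\infty}\sqrt{s}\int\Delta^2\chi_A|R_{k,s}|^2$, and then invoking the dedicated coercivity estimate of \cite[Prop.~B.1]{K-L-R} for the combined form $\|\varepsilon_k\|_{L^2}^2 + \int_0^{\infty}\sqrt{s}\int\chi''|\nabla\varepsilon_{k,s}|^2 - \int(3Q^2\varepsilon_{k,1}^2+Q^2\varepsilon_{k,2}^2)$ on the subspace constrained by \eqref{ortho-cond-Rn-wn} and Lemma \ref{lem-mass-local}; the plain spectral estimate \eqref{coer} for $L_{\pm}$ does not by itself yield this. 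Your plan mentions neither the functional-calculus step nor this specific coercivity input, and as a consequence the claimed inequality ``$\tfrac{d}{dt}\mathfrak{I}_2\gtrsim\|R\|_{L^2}^2/|t|^3$'' on its own does not close the argument.

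Second, your item~(i) is incorrect for the virial derivative: Lemma \ref{lem-dep} together with the algebraic decay of $Q$ decouples the \emph{profile} bubbles $U_k,U_l$, but it cannot decouple the localized remainders $R_k=R\Phi_k$ and $R_l=R\Phi_l$, since $R$ has no pointwise decay --- the paper points this out explicitly. The actual decoupling claims \eqref{clm-loc1}--\eqref{clm-loc2}, which are the technical heart of the multi-bubble nonlocal argument and are not addressed in your plan, rest on the Calder\'on bound (Lemma \ref{lem-l2-pro2}), Fourier-side estimates on $\mathcal{F}\bigl(D^{\half}(\Delta\chi_{A,k}\Phi_l)\bigr)$ exploiting the exponential decay of $\nabla^{\nu}\chi$ for $\nu\geq 2$ and the compact separation of the supports of $\Phi_k,\Phi_l$, and an interpolation argument (dualizing the commutator against $\nabla R$). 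A smaller inaccuracy: the logarithmic loss in the error arises from the virial term pairing $b_k\nabla\chi_A\Phi_k\nabla R$ with the cubic nonlinearity via the fractional chain rule and the $L^{\infty}$ bound \eqref{est-hs}, not from the plain $\|R\|_{L^4}^4$ Taylor remainder of $F$, which needs no log.
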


In order to prove Proposition \ref{prop-I-mono},
we decompose $\mathfrak{I}= \mathfrak{E}+ \mathfrak{L}$,
where $\mathfrak{E}$ denotes the energy part
\ben\ba
\mathfrak{E}(R):=\frac{1}{2} \int |D^\half R|^2+\sum_{k=1}^K\frac{1}{\lambda_{k}} |R|^2 \Phi_k dx
           -{\rm Re}\int F(u)-F(U)-f(U)\ol{R}dx,
\ea\enn
and $\mathfrak{L}$ denotes the localized virial part
\ben\ba      \label{I2}
\mathfrak{L}(R):=\sum_{k=1}^K\frac{b_{k}}{2}{\rm Im}
            \int (\nabla\chi_A) (\frac{x-\alpha_{k}}{\lambda_k})\cdot\nabla R\ol{R}\Phi_kdx.
\ea\enn

Propositions \ref{prop-I1t} and \ref{prop-I2t} below
contain the main estimates of $\mathfrak{E}$ and  $\mathfrak{L}$, respectively.

\begin{proposition} [Control of the energy part]  \label{prop-I1t}
For all $t\in [T_*, T]$, there exists some constant $C>0$ such that
\be\ba  \label{dt-E}
  \frac{d\mathfrak{E}}{dt}
\geq &\sum_{k=1}^{K}\left(\frac{b_k}{2\lambda_k^2} \|R_k\|_{L^2}^2
     -\frac{b_k}{2\lbb_{k}}{\rm Re}\int 2|U_k|^2|R_k|^2+\ol{U}_k^2R_k^2dx\right.\\
&\quad\quad\left.-b_k{\rm Re} \int \frac{x-\a_k}{\lbb_k}\cdot \nabla \ol{U}_k(\ol{U}_kR_k^2+2U_k|R_k|^2)dx\right)
     -C\(|t|^{3-\delta}+\frac{\|R\|_{L^2}^2}{|t|^{3-\delta}}+X(t)\).
\ea\ee
\end{proposition}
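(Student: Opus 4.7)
The idea is to differentiate $\mathfrak{E}$ in time, substitute the equation \eqref{equa-R} for $R$ to eliminate $\partial_t R$, extract the leading positive contribution from $\dot\lambda_k=-b_k+O(|t|^{3-\delta})$, and absorb every other term into the declared error $O(|t|^{3-\delta}+\|R\|_{L^2}^2/|t|^{3-\delta}+X(t))$. Concretely, $\tfrac{d\mathfrak{E}}{dt}$ splits into three blocks: a kinetic piece $\mathrm{Re}\<DR,\partial_t R\>$ from $\tfrac12\|D^{\half}R\|_{L^2}^2$; a localized-mass piece $\sum_k\bigl[-\dot\lambda_k/\lambda_k^2\int|R|^2\Phi_k\,dx+2\lambda_k^{-1}\mathrm{Re}\int\partial_t R\,\overline R\,\Phi_k\,dx\bigr]$; and a potential piece coming from the Taylor expansion
\[
F(u)-F(U)-\mathrm{Re}(f(U)\overline R)=|U|^2|R|^2+\tfrac12\mathrm{Re}(U^2\overline R^2)+\mathrm{Re}(U\overline R)|R|^2+\tfrac14|R|^4,
\]
whose time derivative produces both $\partial_t R$ and $\partial_t U$ contributions.

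The clean algebraic point is that after inserting $i\partial_t R=DR-[f(u)-f(U)]-\eta$, the $\partial_t R$ terms from all three blocks combine into $\mathrm{Re}\<DR+2|U|^2R+U^2\overline R,\partial_t R\>$, i.e., the linearization around $U$ paired with $\partial_t R$; this pairing is purely imaginary up to an $\eta$-remainder estimated by \eqref{eta-L2} and \eqref{Mod-w-lbb}. The remaining $\partial_t U$ contributions are handled using $U_k=\lambda_k^{-\half}Q_k(t,(x-\alpha_k)/\lambda_k)e^{i\gamma_k}$ and the modulation equations of Proposition~\ref{Prop-Mod-bdd}: at main order $\partial_t U_k\approx-(b_k/\lambda_k)\Lambda_k U_k$ with $\Lambda_k=\tfrac12+(x-\alpha_k)\cdot\nabla$. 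Symmetrizing $\Lambda_k$ and combining with $-\dot\lambda_k/\lambda_k^2=b_k/\lambda_k^2+O$ from the localized-mass block yields the main term $\sum_k(b_k/2\lambda_k^2)\|R_k\|_{L^2}^2$, the bilinear correction $-(b_k/2\lambda_k)\int[2|U_k|^2|R_k|^2+\overline U_k^2 R_k^2]\,dx$, and, after an integration by parts on the transport part, the gradient correction $-b_k\mathrm{Re}\int\frac{x-\alpha_k}{\lambda_k}\cdot\nabla\overline U_k(\overline U_k R_k^2+2U_k|R_k|^2)\,dx$. Error bookkeeping then follows the template developed in the proof of Lemma~\ref{lem-mass-local}: cross-bubble terms, controlled by Lemma~\ref{lem-dep}, contribute $O(|t|^{3-\delta})$; commutators $[D,\Phi_k]$ arising when the kinetic term meets the localization are $O(\|R\|_{L^2}^2)\lesssim X(t)$ via the Calder\'on estimate \eqref{est-comm}; cubic and quartic Taylor remainders are $O(\|R\|_{H^{\half}}^3)\lesssim X(t)$ by Lemma~\ref{Lem-GN} and \eqref{R-Tt}; the $\eta$-pairings give $O(|t|^{3-\delta})$ by \eqref{eta-L2} combined with \eqref{Mod-w-lbb}.

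I expect the main technical obstacle to be the interplay of the nonlocal operator $D$ with the sharp localization $\Phi_k$: in the NLS case the analogous identities would follow from a pointwise integration by parts, but here one must replace them by a combination of the pointwise integral representation \eqref{id-fra2} (as in the $I_{1,i}$-analysis of Lemma~\ref{lem-mass-local}) and the Calder\'on commutator estimate in $L^2$. Ensuring that these commutator errors are absorbable into $O(X(t))$, and in particular that the principal coefficient $b_k/(2\lambda_k^2)$ comes out exactly, is the sharpest point of the argument; it is also the reason the weight $1/\lambda_k$ in the localized-mass piece of $\mathfrak{E}$ must be chosen with coefficient one rather than one-half, so that the symmetrization with the $\Lambda_k$-part of $\partial_t U_k$ produces the announced factor.
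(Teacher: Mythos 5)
Your top-level plan matches the paper's: differentiate $\mathfrak{E}$, substitute \eqref{equa-R} for $\partial_t R$, pull the leading positive term from $\dot\lambda_k=-b_k+\mathcal{O}(\mathrm{Mod})$, feed $\partial_t U_k$ through the modulation equations, and bookkeep the errors. But the algebraic crux is misidentified in a way that would block the estimate rather than just requiring polish. Collecting the $\partial_t R$ contributions from the kinetic, localized-mass and potential blocks actually gives $\mathrm{Re}\< DR+\sum_k\lambda_k^{-1}R_k-(f(u)-f(U)),\ \partial_t R\>$: the \emph{full} nonlinear increment $f(u)-f(U)=2|U|^2R+U^2\overline R+2U|R|^2+\overline UR^2+|R|^2R$ appears with a \emph{minus} sign, and there is an additional $\sum_k\lambda_k^{-1}R_k$ from the localized-mass piece. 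Setting $B:=DR-(f(u)-f(U))$, the equation reads $i\partial_t R=B-\eta$, hence $\mathrm{Re}\<B,\partial_t R\>=\mathrm{Im}\<B,\eta\>$; this is the clean cancellation, and it works precisely because the coefficient equals the complete quantity $B$. Your proposed coefficient $DR+2|U|^2R+U^2\overline R$ has the wrong sign on the quadratic terms, omits $\sum_k\lambda_k^{-1}R_k$, and truncates $f(u)-f(U)$ at the linearization. With that $A$, $\mathrm{Re}\<A,\partial_t R\>=-\mathrm{Im}\<A,B\>+\mathrm{Im}\<A,\eta\>$ and $\mathrm{Im}\<A,B\>$ does \emph{not} vanish: it contains $\mathrm{Im}\<DR,\,2U|R|^2+\overline UR^2+|R|^2R\>$, which pairs $DR$ against cubics in $R$. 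Since the bootstrap only controls $D^{\frac12+\varsigma}R$ with $\varsigma<\frac12$, $\|DR\|_{L^2}$ is not dominated by $X(t)$, so this residual cannot be absorbed into $\mathcal{O}(X(t))$ and your argument stalls. Meanwhile the $\sum_k\lambda_k^{-1}R_k$ piece you drop is, after the substitution, precisely what becomes $\mathfrak{E}_2,\mathfrak{E}_3,\mathfrak{E}_5$ in the paper's proof, and $\mathfrak{E}_3+\mathfrak{E}_4$ is what produces the announced bilinear correction, so it cannot be discarded.

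Two further slips propagate sign and coefficient errors. The correct leading expansion is $\partial_t U_k=\frac{i}{\lambda_k}U_k+\frac{b_k}{\lambda_k}\Lambda_kU_k-v_k\nabla U_k+\mathcal{O}(\cdot)$ (displayed after \eqref{est-e3-1}), with a \emph{plus} on the $\Lambda_k$-term; your minus would flip the sign of both the bilinear and gradient corrections in \eqref{dt-E}. And your remark that the localized-mass weight ``must be chosen with coefficient one rather than one-half'' contradicts \eqref{def-I}: the weight is $\frac{1}{2\lambda_k}$, and this $\tfrac12$ is exactly what makes $\mathfrak{E}_1=-\sum_k\frac{\dot\lambda_k}{2\lambda_k^2}\int|R|^2\Phi_k\,dx$ yield the announced factor $\frac{b_k}{2\lambda_k^2}$ directly, with no symmetrization step needed.
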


\begin{proof}
In view of equation \eqref{equa-R}, we  derive
\begin{align} \label{equa-I1t}
 \frac{d \mathfrak{E}}{dt}
=& -\sum_{k=1}^K \frac{\dot{\lbb}_{k}}{2\lbb_{k}^{2}}\int|R|^2\Phi_kdx
- \sum_{k=1}^K \frac{1}{\lbb_{k}}{\rm Im} \< R_k, D R\>
  -\sum_{k=1}^K \frac{1}{\lbb_{k}}{\rm Im} \<2|U|^2R+U^2\ol{R}, R_k\>  \nonumber\\
  &  -{\rm Re} \< 2U|R|^2+\ol{U}R^2+|R|^2R,  \partial_t {U} \>
     -\sum_{k=1}^K \frac{1}{\lbb_{k}} {\rm Im} \< 2U|R|^2+\ol{U}R^2+|R|^2R, R_k\>\nonumber\\
&+{\rm Im} \<D R +\sum_{k=1}^K  \frac{1}{\lbb_k}R_k -(|u|^2u-|U|^2U), \eta\>
=:  \sum\limits_{j=1}^6 \mathfrak{E}_{j}.
\end{align}
Below we estimate $\mathfrak{E}_{j}$, $1\leq j\leq 6$.

{\it $(i)$ Estimate of $\mathfrak{E}_{1}$.}
Using Lemma \ref{Lem-P-ve-U} we get
$|\frac{\dot{\lbb}_k +b_k}{\lbb_k^2}| \leq C \frac{Mod}{\lbb_k^2} \leq C |t|^{-\delta}$,
and thus
\begin{align} \label{I1t1-esti}
   \mathfrak{E}_{1}
  =\sum_{k=1}^{K} (\frac{b_k}{2\lbb_k^2}\int |R|^2\Phi_kdx-\frac{ \dot{\lbb}_k +b_k}{2\lbb_k^2}\int |R|^2\Phi_kdx)
  \geq\sum_{j=1}^{K} \frac{b_k}{2\lbb_k^2} \|R_k\|_{L^2}^2
        - C |t|^{2-\delta}X(t).
\end{align}

{\it $(ii)$ Estimate of $\mathfrak{E}_{2}$.}
Using the fact that $\sum_{k=1}^{K} R_k=R$ and ${\rm Im} \<R, DR\>=0$
we compute
\begin{align*}
  |\mathfrak{E}_{2}|
  =|\sum\limits_{k=1}^K
     (\frac{1}{\lbb_k} -\frac{1}{\omega t^2})
     {\rm Im} \<R_k, DR\>|
        \leq  \sum\limits_{k=1}^K
        \frac{|\lbb_k-\omega t^2|}{\lbb_k \omega t^2} |{\rm Im} \<R_k, DR\>|.
\end{align*}
Applying Lemma \ref{Lem-P-ve-U} and using the a prior bound \eqref{lbbn-Tt} we get
\be\label{est-e2}
|\mathfrak{E}_{2}|\leq C|t|^{-2\delta}\|R\|_{L^2}^2\leq CX(t).
\ee

{\it $(iii)$ Estimate of $\mathfrak{E}_{3}+\mathfrak{E}_{4}$.}
We first treat the quadratic terms of $R$ in $\mathfrak{E}_{3}$ and $\mathfrak{E}_{4}$.
From \eqref{Q-decay} we  have
\begin{align}
\sum_{k=1}^K \frac{1}{\lbb_{k}}{\rm Im} \<2|U|^2R+U^2\ol{R}, R_k\>
&=\sum_{k=1}^K \frac{1}{\lbb_{k}}{\rm Im} \<2|U_k|^2R_k+U_k^2\ol{R}_k, R_k\>+\calo(\|R\|^2_{L^2})\nonumber\\
&=\sum_{k=1}^K \frac{1}{\lbb_{k}}{\rm Im} \int U_k^2\ol{R}_k^2dx+\calo(\|R\|^2_{L^2}),\nonumber
\end{align}
and
\begin{align}
{\rm Re} \< 2U|R|^2+\ol{U}R^2,  \partial_t U \>
&=\sum_{k=1}^{K}{\rm Re} \< 2U_k|R_k|^2+\ol{U}_kR_k^2,  \partial_t U_k \>+\calo(\|R\|^2_{L^2}).\label{est-e3-1}
\end{align}
By \eqref{exp-eta}, Lemmas \ref{lem-app} and \ref{Lem-P-ve-U},
a straightforward computation shows that, $1\leq k\leq K$,
\ben
\partial_tU_k(x)=\frac{i}{\lbb_k}U_k+\frac{b_k}{2\lbb_k}U_k+b_k\frac{x-\a_k}{\lbb_k}\cdot \nabla U_k-v_k\cdot \nabla U_k+\calo(\lbb_k^{-\half}f(\frac{x-\a_k}{\lbb_k})),
\enn
for some function $f$ satisfying $|f(y)|\leq C\<y\>^{-2}$.
Plugging this into \eqref{est-e3-1} and
applying  Lemma \ref{Lem-P-ve-U} we get
\be\ba\label{est-e3-2}
 {\rm Re} \< 2U|R|^2+\ol{U}R^2,  \partial_t U \>
&=\sum_{k=1}^{K}\frac{1}{\lbb_{k}}{\rm Im} \int \ol{U}_k^2R_k^2dx
+\sum_{k=1}^{K}\frac{b_k}{2\lbb_{k}}{\rm Re}\int 2|U_k|^2|R_k|^2+\ol{U}_k^2R_k^2dx\\
&\quad+\sum_{k=1}^{K}b_k{\rm Re} \int \frac{x-\a_k}{\lbb_k}\cdot \nabla \ol{U}_k(\ol{U}_kR_k^2+2U_k|R_k|^2)dx
+\calo(t^{-2}\|R\|^2_{L^2}).
\ea\ee

Regarding the remaining cubic term in $\mathfrak{E}_{4}$,
using equation \eqref{etan-Rn} we get
\ben
|\< |R|^2R,  \partial_t {U} \>|\leq \int|R|^3(|DU|+|U|^3+|\eta|)dx \leq C\sum_{k=1}^{K}\int|R|^3(|DU_k|+|U_k|^3)dx+\int|R|^3|\eta|dx
\enn
From the Sobolev embeddings, the a prior bound \eqref{R-Tt} and Lemma \ref{Lem-P-ve-U}, it follows that
\be\ba\label{est-e3-3}
|\< |R|^2R,  \partial_t {U} \>|
&\leq C\sum_{k=1}^{K}(\|R^3\|_{\dot{H}^{\frac{1}{4}}}\|U_k\|_{\dot{H}^{\frac{3}{4}}}+\|R\|_{L^6}^3
\|U_k\|_{L^6}^3)+C\|\eta\|_{L^2}\|R\|_{L^6}^3\\
&\leq C\sum_{k=1}^{K}(\|R\|_{L^2}^{\frac{1}{2}}\|R\|_{\dot{H}^{\frac{1}{2}}}^{\frac{5}{2}}
\|U_k\|_{L^2}^{\frac{1}{4}}\|U_k\|_{\dot{H}^{1}}^{\frac{3}{4}}
+\|R\|_{L^2}\|R\|_{\dot{H}^{\frac{1}{2}}}^{2}
\|U_k\|_{L^6}^{3})+C\|\eta\|_{L^2}\|R\|_{L^2}\|R\|^2_{\dot{H}^\half}\\
&\leq C\sum_{k=1}^{K}(\lbb_k^{-\frac{3}{4}}\|R\|^\half_{L^2}\|R\|^\half_{\dot{H}^{\frac 12}}+\lbb_k^{-1}\|R\|_{L^2})
\|R\|_{\dot{H}^{\frac{1}{2}}}^{2}+CX(t)\leq CX(t).
\ea\ee

Thus, collecting \eqref{est-e3-1}, \eqref{est-e3-2} and \eqref{est-e3-3} together
we obtain
\be\ba\label{est-e3}
\mathfrak{E}_{3}+\mathfrak{E}_{4}
=-\sum_{k=1}^{K}\(\frac{b_k}{2\lbb_{k}}{\rm Re}\int 2|U_k|^2|R_k|^2+\ol{U}_k^2R_k^2dx
+b_k{\rm Re} \int \frac{x-\a_k}{\lbb_k}\cdot \nabla \ol{U}_k(\ol{U}_kR_k^2+2U_k|R_k|^2)dx\)
+\calo(X(t)).
\ea\ee

{\it $(ii)$ Estimate of $\mathfrak{E}_{5}$.}
The terms in $\mathfrak{E}_{5}$ have the cubic and higher orders in $R$,
and can also be bounded by using the Sobolev embeddings, \eqref{R-Tt} and Lemma \ref{Lem-P-ve-U}:
\be\ba\label{est-e5}
|\mathfrak{E}_{5}|&=|\sum_{k=1}^{K} \lbb_{k}^{-1} {\rm Im} \< 2U|R|^2+\ol{U}R^2+|R|^2R, R_k\>|\\
&\leq C\sum_{k=1}^{K}\lbb_k^{-1}(\int |U||R|^3 dx+\int |R|^4dx)\leq C\sum_{k=1}^{K}\lbb^{-1}_k(\|U\|_{L^2}\|R\|_{L^6}^3+\|R\|_{L^4}^4)\\
&\leq C\sum_{k=1}^{K}\lbb^{-1}_k(\|R\|_{L^2}\|R\|_{\dot{H}^\half}^2+\|R\|^2_{L^2}\|R\|_{\dot{H}^\half}^2)
\leq CX(t).
\ea\ee

{\it $(ii)$ Estimate of $\mathfrak{E}_{6}$.}
We expand the nonlinearity and use the integration by parts to get
\be\ba\label{est-e6-1}
 \mathfrak{E}_{6}
 &=-{\rm Im} \<2U|R|^2+\ol{U}R^2+|R|^2R, \eta\>
   + {\rm Im} \<R, D \eta +\sum_{k=1}^K  \frac{1}{\lbb_k}\eta\Phi_k -2|U|^2\eta+U^2\ol{\eta}\>  \\
 &=:  \mathfrak{E}_{6.1} +  \mathfrak{E}_{6.2}.
\ea\ee

Let us first estimate the first term $\mathfrak{E}_{6.1}$ on the right-hand side above,
which have the quadratic and higher orders of $R$.
Using the Sobolev embeddings, \eqref{R-Tt} and Lemma \ref{Lem-P-ve-U} we have
\be\ba\label{est-e6-6}
 |\mathfrak{E}_{6.1}|
&\leq \|\eta\|_{L^2}(\|U\|_{L^6}\|R\|_{L^6}^2+\|R\|_{L^6}^3)\\
&\leq C\|\eta\|_{L^2}(|t|^{-\frac{2}{3}}\|R\|_{L^2}^{\frac{2}{3}}\|R\|_{\dot{H}^\half}^\frac{4}{3}
+\|R\|_{L^2}\|R\|_{\dot{H}^\half}^2)\\
&\leq C\|\eta\|_{L^2}X(t)\leq CX(t).
\ea\ee

Next, we treat the more delicate second part $\mathfrak{E}_{6.2}$
on the right-hand side of \eqref{est-e6-1},
which contains the linear terms of $R$.
In view of \eqref{exp-eta} and \eqref{etan-Rn},
we write
\ben
\eta=\sum_{k=1}^{K}\eta_k+\wt\eta
\enn
with
\ben\ba
\eta_k=e^{i\g_{k}}\lambda_{k}^{-\frac 32}
&\left(-(\lambda_{k}\dot{v}_{k}+b_kv_k)G_1
  -(\lambda_{k}\dot{b}_{k}+\half b_k^2) S_1
  -i(\dot{\alpha}_{k}-v_k)\cdot \nabla Q\right.\\
     & \left.-i(\dot{\lambda}_{k}+b_k)\Lambda Q -(\lambda_{k}\dot{\g}_{k}-1)Q\right)(t,\frac{x-\alpha_{k}}{\lambda_{k}}).
\ea\enn
In view of \eqref{Q-decay},  Lemmas \ref{lem-app} and \ref{Lem-P-ve-U},
we have for $\nu=0,1$
\ben
|\nabla^\nu\wt\eta(x)|\leq C\sum_{k=1}^{K}\lbb_k^{-\frac{3}{2}-\nu}\<\frac{x-\a_k}{\lbb_k}\>^{-2}(|t|Mod_k+|t|^4)
+|\nabla^\nu(|U|^2U-\sum_{k=1}^{K}|U_k|^2U_k)|,
\enn
and thus
\be\label{est-wteta}
\|\nabla^\nu\wt\eta\|_{L^2}\leq C|t|^{-2-2\nu}(|t|Mod+|t|^4).
\ee
Moreover, using $|\eta_k(x)|\leq C\lbb_k^{-\frac{3}{2}}\<\frac{x-\a_k}{\lbb_k}\>^{-2}Mod_k$, Lemmas \ref{lem-dep} and \ref{Lem-P-ve-U}
we obtain
\begin{align*}
  |\<R,-2|U|^2\eta_k+U^2\ol{\eta}_k\>-\<R,-2|U_k|^2\eta_k+U_k^2\ol{\eta}_k\>|
 \leq C Mod_k\|R\|_{L^2}\leq C|t|^{6-2\delta},
\end{align*}
and
\begin{align*}
 |\<R,  {\lbb_k}^{-1} \eta\Phi_k\>-\<R,  {\lbb_k}^{-1} \eta_k\>|
\leq \lbb_k^{-1}\|R\|_{L^2}(\|\eta_k(1-\Phi_k)\|_{L^2}+\sum_{l\neq k}\|\eta_l\Phi_k\|_{L^2})
 \leq C\lbb_k^{-\half}Mod_k\|R\|_{L^2}\leq C|t|^{6-2\delta}.
\end{align*}
Thus, the second term on the right-hand side of \eqref{est-e6-1}
has the expansion
\be\ba\label{est-e6-3}
 \mathfrak{E}_{6.2}&=\sum_{k=1}^K{\rm Im} \<R, D \eta_k +  \frac{1}{\lbb_k}\eta_k -2|U_k|^2\eta_k+U_k^2\ol{\eta}_k\>+
{\rm Im} \<R, D \wt\eta +\sum_{k=1}^K  \frac{1}{\lbb_k}\wt\eta\Phi_k -2|U|^2\wt\eta+U^2\ol{\wt\eta}\>+\calo(|t|^{6-2\delta}).
\ea\ee

The contribution of $\wt\eta$ can be estimated by using \eqref{est-wteta},
\be\ba\label{est-e6-4}
&|{\rm Im} \<R, D \wt\eta +\sum_{k=1}^K  \frac{1}{\lbb_k}\wt\eta\Phi_k -2|U|^2\wt\eta+U^2\ol{\wt\eta}\>|\\
\leq& C(\|\wt\eta\|_{H^1}+|t|^{-2}\|\wt\eta\|_{L^2})\|R\|_{L^2}
\leq C(|t|^{3-\delta}+\frac{\|R\|_{L^2}^2}{|t|^{3-\delta}}).
\ea\ee

In order to estimate the remaining terms involving $\eta_k$ in \eqref{est-e6-3},
we use the renormalization \eqref{ren}
together with expansion $Q_k(y)=Q(y)+\calo(|t|\<y\>^{-2})$
to get
\ben\ba
&{\rm Im}\<R, D \eta_k +  \frac{1}{\lbb_k}\eta_k -2|U_k|^2\eta_k+U_k^2\ol{\eta}_k\>\\
&=-\lbb_k^{-2}\left((\lambda_{k}\dot{v}_{k}+b_kv_k)\<\epsilon_{k,2},L_-G_1\>
  +(\lambda_{k}\dot{b}_{k}+\half b_k^2) \<\epsilon_{k,2},L_-S_1\>
  -(\dot{\alpha}_{k}-v_k)\cdot \<\epsilon_{k,1},L_+\na Q\>\right.\\
     &\qquad \left.\quad\quad-(\dot{\lambda}_{k}+b_k)\<\epsilon_{k,1},L_+\Lambda Q\>+(\lambda_{k}\dot{\g}_{k}-1)\<\epsilon_{k,2},L_-Q\>+\calo(|t|Mod_k\|R\|_{L^2})
     \right).
\ea\enn
Then, by virtue of the algebraic property of the linearized operator \eqref{Q-kernel}
and the orthogonality conditions in \eqref{ortho-cond-Rn-wn} and Lemma \ref{lem-mass-local},
an additional $|t|$ factor can be gained:
\be\ba\label{est-e6-5}
|{\rm Im}\<R, D \eta_k +  \frac{1}{\lbb_k}\eta_k -2|U_k|^2\eta_k+U_k^2\ol{\eta}_k\>|
&\leq C\lbb_k^{-2}Mod_k(|\<\epsilon_{k,2},\na Q\>|+|\<\epsilon_{k,2},\Lambda Q\>|+|\<\epsilon_{k,1}, Q\>|+|t|\|R\|_{L^2})\\
&\leq C(\frac{\|R\|_{L^2}^2}{t^2}+|t|^{4-2\delta}).
\ea\ee

Thus, plugging \eqref{est-e6-4} and \eqref{est-e6-5} into \eqref{est-e6-3}
we get
\ben
| \mathfrak{E}_{6.2}|\leq C(|t|^{3-\delta}+\frac{\|R\|_{L^2}^2}{|t|^{3-\delta}}),
\enn
which along with \eqref{est-e6-6} yields that
\be\label{est-e6}
|\mathfrak{E}_{6}|
\leq |\mathfrak{E}_{6,1}| + |\mathfrak{E}_{6,2}|
\leq  C(|t|^{3-\delta}+\frac{\|R\|_{L^2}^2}{|t|^{3-\delta}}+X(t)).
\ee

Therefore, plugging estimates \eqref{I1t1-esti}, \eqref{est-e2}, \eqref{est-e3}, \eqref{est-e5} and \eqref{est-e6} into \eqref{equa-I1t}
we obtain \eqref{dt-E} and finish the proof of Proposition \ref{prop-I1t}.
\end{proof}

Proposition \ref{prop-I2t} below contains the control of the more delicate
localized virial functional.

\begin{proposition} [Control of the  virial part] \label{prop-I2t}
For all $t\in [T_*, T]$, there exists a constant $C(A)>0$ such that
\ben\ba
\frac{d \mathfrak{L}}{dt}=&\sum_{k=1}^{K}\left(\frac{b_{k}}{2\lbb_k}   {\rm Re}
       \< \Delta\chi_A(\frac{x-\alpha_{k}}{\lambda_{k}})R_k, DR_k \>
       +b_{k}   {\rm Re}
       \< \na\chi_A(\frac{x-\alpha_{k}}{\lambda_{k}})\nabla R_k, DR_k\>\right.\\
&\quad\quad\left.+b_{k}{\rm Re}\< \na\chi_A(\frac{x-\alpha_{k}}{\lambda_{k}})\na U_k,2U_k|R_k|^2+\ol{U}_kR_k^2 \>\right)\\
& +\calo\(C(A)\(\ln(2+\|R\|_{H^\half}^{-1})\)^{\half}X(t)
+\frac{\|R\|_{L^2}^2}{|t|^{3-\delta}}+|t|^{3-\delta}\).
\ea\enn
\end{proposition}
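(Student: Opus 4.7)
The plan is to differentiate $\mathfrak{L}$ in time, substitute the equation \eqref{equa-R} for $\partial_t R$, integrate by parts to move $D$ onto $R$, and isolate the three quadratic-in-$R$ main terms stated in the proposition, while absorbing all remaining contributions into the announced error. Writing $\psi_k(t,x):=(\nabla \chi_A)(\tfrac{x-\alpha_k}{\lambda_k})\Phi_k(x)$ for short, we have
\[
\frac{d\mathfrak{L}}{dt}=\sum_{k=1}^K\frac{\dot b_k}{2}\,\mathrm{Im}\!\int \psi_k\cdot\nabla R\,\overline R\,dx+\sum_{k=1}^K\frac{b_k}{2}\,\mathrm{Im}\!\int(\partial_t\psi_k)\cdot\nabla R\,\overline R\,dx+\sum_{k=1}^K\frac{b_k}{2}\,\mathrm{Im}\!\int\psi_k\cdot\bigl(\nabla(\partial_t R)\,\overline R+\nabla R\,\partial_t\overline R\bigr)\,dx.
\]
The first sum is of size $|\dot b_k|\,\|R\|_{H^{1/2}}\|R\|_{L^2}$, and using $|\dot b_k|\lesssim |t|+|t|^{-1}\mathrm{Mod}$ together with Lemma \ref{Lem-P-ve-U} it falls into the error $\mathcal{O}(X(t))$. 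The second sum produces, via $\partial_t(\tfrac{x-\alpha_k}{\lambda_k})=-\tfrac{\dot\alpha_k}{\lambda_k}-\tfrac{\dot\lambda_k}{\lambda_k}\cdot\tfrac{x-\alpha_k}{\lambda_k}$, terms of order $|t|^{-1}\mathrm{Mod}\,\|R\|_{L^2}^2/|t|$, again contained in $\mathcal{O}(X(t))$ by \eqref{Mod-w-lbb}.

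The crucial third sum is treated by inserting $i\partial_t R=DR-(|u|^2u-|U|^2U)-\eta$. After moving the time derivative outside the gradient and integrating by parts, the contribution of $DR$ gives
\[
-\,\mathrm{Re}\!\int\psi_k\cdot\nabla R\,D\overline R\,dx+\mathrm{Re}\!\int\psi_k\cdot\nabla \overline R\,DR\,dx+\text{symmetric},
\]
which, after symmetrization and one further integration by parts (writing $\nabla\cdot\psi_k=\lambda_k^{-1}\Delta\chi_A(\tfrac{x-\alpha_k}{\lambda_k})\Phi_k+\nabla\chi_A(\tfrac{x-\alpha_k}{\lambda_k})\cdot\nabla\Phi_k$), produces the two leading terms
\[
\frac{b_k}{2\lambda_k}\mathrm{Re}\bigl\langle\Delta\chi_A(\tfrac{\cdot-\alpha_k}{\lambda_k})R_k,DR_k\bigr\rangle+b_k\mathrm{Re}\bigl\langle\nabla\chi_A(\tfrac{\cdot-\alpha_k}{\lambda_k})\nabla R_k,DR_k\bigr\rangle,
\]
up to commutator errors of the form $[D,\Phi_k]$ and $[D,\nabla\chi_A(\tfrac{\cdot-\alpha_k}{\lambda_k})]$ acting on $R$. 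This is the technical heart of the argument: we will bound these commutators by the Calder\'on estimate in Lemma \ref{lem-l2-pro2}, the integration representation formula for $D$, and the slow decay of the cutoffs outside the supports of the bubbles, producing an $A$-dependent constant $C(A)$ and picking up the critical logarithmic factor $(\ln(2+\|R\|_{H^{1/2}}^{-1}))^{1/2}$ from the endpoint Sobolev embedding when estimating $\|\nabla R\|_{L^\infty_{\mathrm{loc}}}$-type norms. These errors are precisely of the form $C(A)(\ln(2+\|R\|_{H^{1/2}}^{-1}))^{1/2}X(t)$.

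The nonlinear contribution $-(|u|^2u-|U|^2U)$ splits into a pure quadratic part in $R$ with $U_k$ coefficients and a higher-order remainder; after integration by parts in the gradient and localizing as above, the quadratic-in-$R$ interaction with $\nabla U_k$ yields
\[
b_k\,\mathrm{Re}\bigl\langle\nabla\chi_A(\tfrac{\cdot-\alpha_k}{\lambda_k})\nabla U_k,\,2U_k|R_k|^2+\overline{U}_kR_k^2\bigr\rangle,
\]
i.e.\ the third main term, while the cubic and quartic pieces in $R$ are controlled by the Sobolev and Gagliardo-Nirenberg inequalities (Lemma \ref{Lem-GN}) and fall in $\mathcal{O}(X(t))$. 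The contribution from the error source $\eta$ is handled exactly as for $\mathfrak{E}_{6}$ in the proof of Proposition \ref{prop-I1t}: the cubic-in-$R$ interactions with $\eta$ give $\mathcal{O}(X(t))$, and the linear-in-$R$ interactions with $\eta$ use the orthogonality conditions \eqref{ortho-cond-Rn-wn}, Lemma \ref{lem-mass-local}, and the bound $|\nabla^\nu\widetilde\eta|$ from \eqref{est-wteta} to yield $\mathcal{O}\bigl(|t|^{3-\delta}+\|R\|_{L^2}^2/|t|^{3-\delta}\bigr)$. Collecting all contributions gives the claimed identity. I expect the principal obstacle to be the sharp estimation of the nonlocal commutators against $\nabla\chi_A\,\Phi_k$: one must carefully decompose the domain into near-bubble, intermediate, and far regimes and use the pointwise representation of $D$ together with Calder\'on's inequality, since a naive bound would lose either the $(\ln(\cdot))^{1/2}$ factor or the dependence on $A$.
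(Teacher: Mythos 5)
Your overall scaffolding is the same as the paper's: differentiate $\mathfrak{L}$, feed in \eqref{equa-R}, integrate by parts to split off the quadratic virial terms and the nonlinear interaction with $\nabla U_k$, and absorb the remainder into the stated error. The estimates of the two terms coming from $\dot b_k$ and from $\partial_t(\nabla\chi_A)$ are as you describe, and the treatment of the $\eta$-contribution indeed mirrors $\mathfrak{E}_{6}$ in Proposition \ref{prop-I1t}.

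However, you have misidentified where the logarithmic factor $\bigl(\ln(2+\|R\|_{H^{1/2}}^{-1})\bigr)^{1/2}$ enters, and this is not a cosmetic point. You attribute it to the nonlocal commutators $[D,\Phi_k]$ and $[D,\nabla\chi_A]$, claiming it arises from estimating ``$\|\nabla R\|_{L^\infty_{\rm loc}}$-type norms'' there. In fact the paper's decoupling claims \eqref{clm-loc1} and \eqref{clm-loc2} are established purely with $L^2$-based commutator bounds (the Calder\'on estimate of Lemma \ref{lem-l2-pro2}, the commutator Lemma \ref{lem-l2-pro}, and the Fourier $L^1$ decay \eqref{D12Dchi-L1} of the localization weights), and they produce only $\mathcal{O}_A(X(t))$ with \emph{no} logarithm. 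The logarithm instead comes exclusively from the cubic-and-higher nonlinear term where $\nabla R$ hits $2U|R|^2+\overline U R^2+|R|^2R$ (estimate \eqref{est-l-8}): there one needs the Brezis--Gallouet bound \eqref{est-hs}, i.e.\ $\|R\|_{L^\infty}\lesssim\|R\|_{H^{1/2}}\bigl(\ln(2+\|R\|_{H^s}/\|R\|_{H^{1/2}})\bigr)^{1/2}$, together with the a priori control $\|D^{1/2+\varsigma}R\|_{L^2}\leq|t|^{1-\delta-2\varsigma}$ from \eqref{R-Tt}. Your proposal states exactly the opposite -- that the cubic/quartic pieces are $\mathcal{O}(X(t))$ by Sobolev/Gagliardo--Nirenberg -- which is false for the term involving $\nabla\chi_A\nabla R\Phi_k$ against the nonlinearity; without the log that term cannot be absorbed. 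Correcting this also requires recognizing that the extra $H^{1/2+\varsigma}$ bootstrap hypothesis is what makes \eqref{est-hs} usable there, a dependence your sketch does not reflect. Separately, the decoupling of $R$ into $R_k$ inside the virial quadratic forms is subtler than a direct Calder\'on bound: a naive application loses a factor $|t|^{-1}$ relative to $X(t)$, and the paper instead exploits the exponential decay of $\nabla^\nu\chi$ for $\nu\geq2$, the support separation between $\Phi_k$ and $\Phi_l$, and $L^1$-Fourier bounds like \eqref{D12Dchi-L1}, plus an interpolation argument for the $\langle D(\nabla\chi_{A,k}R)-\nabla\chi_{A,k}DR,\nabla R\Phi_l\rangle$ term; you gesture at ``near/intermediate/far regimes and the pointwise representation of $D$'', which is in the right spirit but would need the Fourier $L^1$ mechanism to actually close at the $X(t)$ level.
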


\begin{proof}
Using the integration by parts,  we get
\ben\ba
   \frac{d \mathfrak{L}}{dt}
   =&\sum_{k=1}^K \frac{\dot{b}_{k}}{2}
           {\rm Im} \< \nabla\chi_A(\frac{x-\alpha_{k}}{\lambda_{k}})\cdot\nabla R, R_k\>
           +\sum_{k=1}^K\frac{b_{k}}{2}{\rm Im}
       \< \partial_t (\nabla\chi_A(\frac{x-\alpha_{k}}{\lambda_{k}}))\cdot\nabla R, {R}_k\>\\
    &+\sum_{k=1}^K \frac{b_{k}}{2\lbb_k}   {\rm Im}
       \< \Delta\chi_A(\frac{x-\alpha_{k}}{\lambda_{k}})R_k, \pa_t {R} \>
   +   \sum_{k=1}^K \frac{b_{k}}{2}    {\rm Im}
       \< \nabla\chi_A(\frac{x-\alpha_{k}}{\lambda_{k}})\cdot ( \nabla R_k + \na R \Phi_k), \partial_t  R \>  \\
    =:& \sum\limits_{j=1}^4 \mathfrak{L}_j.
         \ea\enn

$(i)$ {\it Estimates of $ \mathfrak{L}_1$ and $\mathfrak{L}_2$.}
Straightforward computations show that for $1\leq k\leq K$,
\ben\ba
\frac{\dot{b}_{k}}{2}
           \< \nabla\chi_A(\frac{x-\alpha_{k}}{\lambda_{k}})\cdot\nabla R, R_k\>= \frac{\lbb_k\dot{b}_{k}+\half b_k^2}{2\lbb_k}
           \< \nabla\chi_A(\frac{x-\alpha_{k}}{\lambda_{k}})\cdot\nabla R, R_k\>
- \frac{b_{k}^2}{4\lbb_k}
            \< \nabla\chi_A(\frac{x-\alpha_{k}}{\lambda_{k}})\cdot\nabla R, R_k\>,
\ea\enn
and
\ben
\partial_t(\nabla\chi_A(\frac{x-\alpha_k}{\lambda_k}))
= -\nabla^2\chi_A(\frac{x-\alpha_k}{\lambda_k})\cdot
\big(\frac{x-\a_k}{\lbb_k}\cdot \frac{\dot{\lbb}_k +b_k}{\lbb_k} -\frac{x-\a_k}{\lbb_k}\cdot \frac{b_k}{\lbb_k}
  + \frac{ \dot{\a}_k - v_k}{\lbb_k}
  + \frac{v_k}{\lbb_k} \big).
\enn
By Lemmas \ref{lem-inter} and \ref{Lem-P-ve-U}, there exists a constant $C(A)>0$ such that
\begin{align*}
|\mathfrak{L}_1|&\leq C(1+\frac{Mod}{t^2})\sum_{k=1}^{K}|\int \na\chi_A(\frac{x-\alpha_{k}}{\lambda_{k}})\Phi_k\nabla R \ol{R}dx|\nonumber\\
&\leq C\sum_{k=1}^{K}(\|\na \chi_A\|_{L^\9}\|D^{\half}R\|_{L^2}^2+\lbb_k^{-1}\|\na^2 \chi_A\|_{L^\9}\|R\|_{L^2}^2)
\leq C(A)X(t),
\end{align*}
and
\begin{align*}
 |\mathfrak{L}_2|&\leq C(|t|+Mod)\sum_{k=1}^{K}\frac{b_k}{\lbb_k}|\int (1+\frac{x-\a_k}{\lbb_k})\na^2\chi_A(\frac{x-\alpha_{k}}{\lambda_{k}})\Phi_k\nabla R \ol{R}dx|\nonumber\\
 &\leq C\sum_{k=1}^{K}(\|(1+y)\na^2 \chi_A\|_{L^\9}\|D^{\half}R\|_{L^2}^2+\lbb_k^{-1}\|(1+y)\na^3 \chi_A\|_{L^\9}\|R\|_{L^2}^2)
 \leq C(A)X(t).
\end{align*}

$(ii)$ {\it Estimates of $ \mathfrak{L}_3$ and $ \mathfrak{L}_4$.} Using equation \eqref{equa-R},
we first get that for $1\leq k\leq K$,
\be\ba\label{est-l3l4}
 &\frac{b_{k}}{2\lbb_k}   {\rm Im}
       \< \Delta\chi_A(\frac{x-\alpha_{k}}{\lambda_{k}})R_k, \pa_t {R} \>+\frac{b_{k}}{2}   {\rm Im}
       \< \nabla\chi_A(\frac{x-\alpha_{k}}{\lambda_{k}})( \nabla R_k + \na R \Phi_k), \pa_t {R} \>\\
       =&\frac{b_{k}}{2\lbb_k}   {\rm Re}
       \< \Delta\chi_A(\frac{x-\alpha_{k}}{\lambda_{k}})R_k, DR\>
       +\frac{b_{k}}{2}   {\rm Re}
       \<\na\chi_A(\frac{x-\alpha_{k}}{\lambda_{k}})( \nabla R_k + \na R\Phi_k),DR\>\\
       &-  \frac{b_{k}}{2\lbb_k} {\rm Re}
       \<\Delta\chi_A(\frac{x-\alpha_{k}}{\lambda_{k}})R_k, f(u)-f(U)\>
       -  \frac{b_{k}}{2} {\rm Re}
       \< \na\chi_A(\frac{x-\alpha_{k}}{\lambda_{k}})( \nabla R_k + \na R\Phi_k),f(u)-f(U) \>\\
       & - \frac{b_{k}}{2\lbb_k} {\rm Re}
       \<\Delta\chi_A(\frac{x-\alpha_{k}}{\lambda_{k}})R_k, \eta \>-  \frac{b_{k}}{2} {\rm Re}
       \< \na\chi_A(\frac{x-\alpha_{k}}{\lambda_{k}})( \nabla R_k + \na R\Phi_k), \eta \>  \\
       =&: \sum\limits_{j=1}^6 \mathfrak{L}_{34,j}.
\ea\ee

Let us first estimate the quadratic terms of $R$ in \eqref{est-l3l4},
in order to decouple the interaction between the remainders.
We claim that there exists $C(A)>0$ such that, for $1\leq k \leq K$,
\begin{align}
& \mathfrak{L}_{34,1} = \frac{b_{k}}{2\lbb_k}   {\rm Re}
       \< \Delta\chi_A(\frac{x-\alpha_{k}}{\lambda_{k}})R_k, DR_k \>
       + \calo_A(X(t)),\label{clm-loc1}\\
& \mathfrak{L}_{34,2}
     = b_{k}   {\rm Re}
       \< \na\chi_A(\frac{x-\alpha_{k}}{\lambda_{k}})\nabla R_k, DR_k\>
     + \calo_A(X(t)),  \label{clm-loc2}
\end{align}
where $\calo_A(\cdot)$ means up to constants depending on $A$.

Once  \eqref{clm-loc1} and \eqref{clm-loc2} are proved,
we have that for $1\leq k\leq K$,
\ben\ba
 \mathfrak{L}_{34,1} + \mathfrak{L}_{34,2}
 =\frac{b_{k}}{2\lbb_k}   {\rm Re}
       \< \Delta\chi_A(\frac{x-\alpha_{k}}{\lambda_{k}})R_k, DR_k \>+
       b_{k}   {\rm Re}
       \< \na\chi_A(\frac{x-\alpha_{k}}{\lambda_{k}})\nabla R_k, DR_k\>+\calo_A(X(t)).
\ea\enn
It should be mentioned that Lemma \ref{lem-dep} is not applicable
to the decoupling of the interaction between the remainders $R$ and $R_k$ in \eqref{clm-loc1} and \eqref{clm-loc2}.
Here we take the advantage of the integration by parts
and the decay of the derivatives of cut-off function $\na \chi$
to decouple different remainder bubbles $R_k$ and $R_l$, $k\not =l$.

{\it Estimate of $\eqref{clm-loc1}$.}
First we prove \eqref{clm-loc1}. By using $R=\sum_{k=1}^{K}R_k$, we have
\ben\ba
   \mathfrak{L}_{34,1} -\frac{b_{k}}{2\lbb_k}   {\rm Re}
       \< \Delta\chi_A(\frac{x-\alpha_{k}}{\lambda_{k}})R_k, DR_k \>
       =\sum_{l\neq k}\frac{b_{k}}{2\lbb_k} \< \Delta\chi_A(\frac{x-\alpha_{k}}{\lambda_{k}})R_k, DR_l \>.
\ea\enn
To estimate the above remainder terms, we find that for $l\neq k$,
\ben\ba
 \frac{b_{k}}{2\lbb_k}   {\rm Re}
       \< \Delta\chi_A(\frac{x-\alpha_{k}}{\lambda_{k}})R_k, DR_l\>
        =\frac{b_{k}}{2\lbb_k}   {\rm Re}
       \< \Delta\chi_A(\frac{x-\alpha_{k}}{\lambda_{k}})R_k, DR\Phi_l \>
       +\frac{b_{k}}{2\lbb_k}   {\rm Re}
       \< \Delta\chi_A(\frac{x-\alpha_{k}}{\lambda_{k}})R_k, DR_l-DR\Phi_l \>.
\ea\enn
Note that,
by the Calder\'on estimate in Lemma \ref{lem-l2-pro2},
\be\label{est-l3-1}
|\frac{b_{k}}{2\lbb_k}   {\rm Re}
       \< \Delta\chi_A(\frac{x-\alpha_{k}}{\lambda_{k}})R_k, DR_l-DR\Phi_l \>|
       \leq C(A)|t|^{-1}\|R\|_{L^2}^2.
\ee
Set $\Delta\chi_{A,k}(x):=\Delta\chi_A(\frac{x-\alpha_{k}}{\lambda_{k}})\Phi_k(x)$.
Splitting $D=D^\half D^\half$ with $D^\half$ being  self-adjoint
and performing the integrating by parts formula
we then get
\be\ba  \label{esti.2}
&\frac{b_{k}}{2\lbb_k}   {\rm Re}
       \< \Delta\chi_A(\frac{x-\alpha_{k}}{\lambda_{k}})R_k, DR\Phi_l \>\\
&=\frac{b_{k}}{2\lbb_k}   {\rm Re}
       \< \Delta\chi_{A,k}\Phi_l, |D^\half R|^2 \>+
\frac{b_{k}}{2\lbb_k}   {\rm Re}
       \< D^\half(\Delta\chi_{A,k}\Phi_lR)
       -D^\half R\Delta\chi_{A,k}\Phi_l, D^\half R \>
\ea\ee

Note that $\Delta \chi(y)$ decays exponentially fast for $|y|$ large,
and so
\be\ba\label{est-l3-2}
|\frac{b_{k}}{2\lbb_k}   {\rm Re}
       \< \Delta\chi_{A,k}\Phi_l, |D^\half R|^2 \>|
       &\leq C|t|^{-1}\int_{|x-x_k|\geq 4\sigma}|\Delta\chi_A(\frac{x-\alpha_{k}}{\lambda_{k}})||D^\half R|^2dx\\
       &\leq C(A)|t|^{-1}e^{-\frac{C}{|t|}}\|D^\half R\|_{L^2}^2
       \leq C(A)X(t).
\ea\ee

Moreover, by  Lemma \ref{lem-l2-pro}, we have
\be\ba   \label{esti.1}
 |\frac{b_{k}}{2\lbb_k}   {\rm Re}
       \< D^\half(\Delta\chi_{A,k}\Phi_lR)
       -D^\half R\Delta\chi_{A,k}\Phi_l, D^\half R \>|
&\leq C|t|^{-1}\|D^\half R\|_{L^2}\|D^\half(\Delta\chi_{A,k}\Phi_lR)
       -D^\half R\Delta\chi_{A,k}\Phi_l\|_{L^2}\\
&\leq C|t|^{-1}\|D^\half R\|_{L^2}\|R\|_{L^2}\|\mathcal{F}(D^\half(\Delta\chi_{A,k}\Phi_l))\|_{L^1}.
\ea\ee

We claim that
\begin{align}  \label{D12Dchi-L1}
  \|\mathcal{F}(D^\half(\Delta\chi_{A,k}\Phi_l))\|_{L^1}\leq C(A).
\end{align}
Actually, using the exponential decay of $\na^\nu\chi(y)$ when $\nu\geq 2$,
\begin{align} \label{F-Deltachi.1}
  |\mathcal{F}(\Delta\chi_{A,k}\Phi_l)(\xi)|\leq C\lbb_k^{-1}\int_{|x-x_k|\geq 4\sigma} |\Delta\chi(\frac{x-\alpha_{k}}{A\lambda_{k}})| dx\leq C.
\end{align}
Since ${\rm supp}(\Phi_l) \subseteq \{|x-x_k|\geq 4\sigma\}$,
we have that for $|\xi|>1$,
\begin{align}  \label{F-Deltachi.2}
|\mathcal{F}(\Delta\chi_{A,k}\Phi_l)(\xi)|&\leq \frac{C}{|\xi|^2}|\mathcal{F}(\na^2(\Delta\chi_{A,k}\Phi_l))(\xi)|
\leq \frac{C}{|\xi|^2}\int|\na^2(\Delta\chi_{A,k}\Phi_l)|dx  \nonumber \\
&\leq \frac{C}{|\xi|^2}\int_{|x-x_k|\geq 4\sigma}\lbb_k^{-2}|\na^4\chi_{A}(\frac{x-\alpha_{k}}{\lambda_{k}})|
   +\lbb_k^{-1} |\na^3\chi_{A}(\frac{x-\alpha_{k}}{\lambda_{k}})|
+|\Delta\chi_{A}(\frac{x-\alpha_{k}}{\lambda_{k}})|dx  \nonumber  \\
&\leq C(A)\frac{e^{-\frac{C}{\lbb_k}}}{|\xi|^2\lbb_k^2}\leq C(A)|\xi|^{-2}
\end{align}
for some $C(A)>0$.
By \eqref{F-Deltachi.1} and \eqref{F-Deltachi.2},
\ben
\|\mathcal{F}(D^\half(\Delta\chi_{A,k}\Phi_l))\|_{L^1}=\int  |\xi|^\half|\mathcal{F}(\Delta\chi_{A,k}\Phi_l)(\xi)|d \xi\leq C(A).
\enn
Thus, we obtain \eqref{D12Dchi-L1}, as claimed.

Hence, taking into account \eqref{esti.1} we get
\be\label{est-l3-3}
|\frac{b_{k}}{2\lbb_k}   {\rm Re}
       \< D^\half(\Delta\chi_{A,k}\Phi_lR)
       -D^\half R\Delta\chi_{A,k}\Phi_l, D^\half R \>|
       \leq C(A) |t|^{-1}\|D^\half R\|_{L^2}\|R\|_{L^2}\leq C(A)X(t).
\ee
Combining estimates \eqref{est-l3-1}, \eqref{esti.2}, \eqref{est-l3-2} and \eqref{est-l3-3} altogether we obtain
\be\label{est-l-1}
  \mathfrak{L}_{34,1} =\frac{b_{k}}{2\lbb_k}   {\rm Re}
       \< \Delta\chi_A(\frac{x-\alpha_{k}}{\lambda_{k}})R_k, DR_k \>+\calo_A(X(t)),
\ee
which proves the desired estimate \eqref{clm-loc1}.

{\it Estimate of \eqref{clm-loc2}.}
Next we prove the more difficult estimate \eqref{clm-loc2}.
First, we expand by the Leibniz rule,
\be\ba\label{est-l4-1}
  \mathfrak{L}_{34,2} &=b_{k}   {\rm Re}
       \< \na\chi_A(\frac{x-\alpha_{k}}{\lambda_{k}})\nabla R\Phi_k, DR\>
       +\frac{b_{k}}{2}   {\rm Re}
       \< \na\chi_A(\frac{x-\alpha_{k}}{\lambda_{k}})R\na\Phi_k, DR\>.
\ea\ee
An application of the integration by parts  yields
\ben\ba
   \frac{b_{k}}{2}   {\rm Re}
       \< \na\chi_A(\frac{x-\alpha_{k}}{\lambda_{k}})R\na\Phi_k, DR\>
       =&\frac{b_{k}}{2}   {\rm Re}
       \< \na\chi_A (\frac{x-\alpha_{k}}{\lambda_{k}}) \na\Phi_k, |D^\half R|^2\> \\
       &+\frac{b_{k}}{2}   {\rm Re}
       \< D^\half(\na\chi_A (\frac{x-\alpha_{k}}{\lambda_{k}}) \na\Phi_kR)
       -D^\half R\na\chi_A(\frac{x-\alpha_{k}}{\lambda_{k}})\na\Phi_k, D^\half R\>.
\ea\enn
We also see that there exists $C(A)>0$ such that
\ben
|\frac{b_{k}}{2}   {\rm Re}
       \< \na\chi_A(\frac{x-\alpha_{k}}{\lambda_{k}})\na\Phi_k, |D^\half R|^2\>|\leq C(A)|t|\|D^\half R\|_{L^2}^2,
\enn
and by Lemma \ref{lem-l2-pro},
\ben\ba
  & |\frac{b_{k}}{2}   {\rm Re}
       \< D^\half(\na\chi_A(\frac{x-\alpha_{k}}{\lambda_{k}})\na\Phi_kR)
         -D^\half R\na\chi_A(\frac{x-\alpha_{k}}{\lambda_{k}})\na\Phi_k, D^\half R\>| \\
&\leq C|t|\|D^\half R\|_{L^2}\|R\|_{L^2}
\|\mathcal{F}(D^\half(\na\chi_A(\frac{\cdot-\alpha_{k}}{\lambda_{k}})\na\Phi_k))\|_{L^1}.
\ea\enn
Similar to the calculation of \eqref{F-Deltachi.1}, \eqref{F-Deltachi.2} and using that $\na\Phi\in \mathscr{S}$, we have
\ben\ba
&|\mathcal{F}(\na\chi_A(\frac{\cdot-\alpha_{k}}{\lambda_{k}})\na\Phi_k)(\xi)|\leq C\int |\na\chi(\frac{x-\alpha_{k}}{A\lambda_{k}})\na\Phi_k| dx\leq C,\\
&|\mathcal{F}(\na\chi_A(\frac{\cdot-\alpha_{k}}{\lambda_{k}})\na\Phi_k)(\xi)|
\leq \frac{C}{|\xi|^2}\int|\na^2(\na\chi(\frac{x-\alpha_{k}}{A\lambda_{k}})\na\Phi_k)|dx\leq C(A)|\xi|^{-2}.
\ea\enn
So we get $\|\mathcal{F}(D^\half(\na\chi_A(\frac{\cdot-\alpha_{k}}{\lambda_{k}})\na\Phi_k))\|_{L^1}\leq C(A)$ and thus
 \be\label{est-l4-2}
 |\frac{b_{k}}{2}   {\rm Re}
       \< \na\chi_A(\frac{x-\alpha_{k}}{\lambda_{k}})R\na\Phi_k, DR\>|=\calo_A(X(t)).
 \ee
Thus, plugging this into \eqref{est-l4-1} we obtain
\begin{align}   \label{esti.3}
\frac{b_{k}}{2}   {\rm Re}
       \< \na\chi_A(\frac{x-\alpha_{k}}{\lambda_{k}})( \nabla R_k + \na R\Phi_k), DR\>
       =b_{k}   {\rm Re}
       \< \na\chi_A(\frac{x-\alpha_{k}}{\lambda_{k}})\nabla R\Phi_k, DR\>+\calo_A(X(t)).
\end{align}

Let $\na\chi_{A,k}(x):=\na\chi_A(\frac{x-\alpha_{k}}{\lambda_{k}})\Phi_k(x)$.
Using the commutator formula  $[D, \nabla]=0$, \eqref{est-l4-2} and
integrating by parts twice  we get
\ben\ba
&b_{k}   {\rm Re}
       \< \na\chi_A(\frac{x-\alpha_{k}}{\lambda_{k}})\nabla R\Phi_k, DR\>\\
&=-\frac{b_{k}}{\lbb_k}   {\rm Re}
       \< \Delta\chi_A(\frac{x-\alpha_{k}}{\lambda_{k}})R\Phi_k, DR\>
-b_{k}   {\rm Re}
       \< \na\chi_A(\frac{x-\alpha_{k}}{\lambda_{k}})R\nabla\Phi_k, DR\>
-b_{k}   {\rm Re}
       \< \na\chi_A(\frac{x-\alpha_{k}}{\lambda_{k}})R\Phi_k,\nabla DR\>\\
&=-\frac{b_{k}}{\lbb_k}   {\rm Re}
       \< \Delta\chi_A(\frac{x-\alpha_{k}}{\lambda_{k}})R\Phi_k, DR\>
-b_{k}   {\rm Re}
       \< \na\chi_A(\frac{x-\alpha_{k}}{\lambda_{k}})R\Phi_k,D\nabla R\>+\calo_A(X(t))\\
&=-\frac{b_{k}}{\lbb_k}   {\rm Re}
       \< \Delta\chi_{A,k}R, DR\>
-b_{k}   {\rm Re}
       \< \na\chi_{A,k}D R, \nabla R\>
 -b_{k}   {\rm Re}
       \< D(\na\chi_{A,k}R)-\na\chi_{A,k}D R, \na R\>+\calo_A(X(t)),
\ea\enn
which implies that
\be\ba\label{est-l4-4}
&b_{k}   {\rm Re}
       \< \na\chi_A(\frac{x-\alpha_{k}}{\lambda_{k}})\nabla R\Phi_k, DR\>\\
=&-\frac{b_{k}}{2\lbb_k}   {\rm Re}
       \< \Delta\chi_{A,k}R, DR\> -\frac{b_{k}}{2}   {\rm Re}
       \< D(\na\chi_{A,k}R)-\na\chi_{A,k}D R,\nabla R\>+\calo_A(X(t)).
\ea\ee
Plugging \eqref{est-l4-4} into \eqref{esti.3} we get that
\be\ba\label{est-l-2}
&\frac{b_{k}}{2}   {\rm Re}
       \< \na\chi_A(\frac{x-\alpha_{k}}{\lambda_{k}})( \nabla R_k + \na R\Phi_k), DR\>\\
=&-\frac{b_{k}}{2\lbb_k}   {\rm Re}
       \< \Delta\chi_{A,k}R, DR\> -\frac{b_{k}}{2}   {\rm Re}
       \< D(\na\chi_{A,k}R)-\na\chi_{A,k}D R,\nabla R\>+\calo_A(X(t)).
\ea\ee

Moreover, a similar calculation as in the proof of \eqref{est-l4-4} shows that
\be\ba\label{est-l-3}
&b_{k}   {\rm Re}
       \< \na\chi_A(\frac{x-\alpha_{k}}{\lambda_{k}})\nabla R_k, DR_k\>\\
=&-\frac{b_{k}}{2\lbb_k}   {\rm Re}
       \< \Delta\chi_{A,k}R, DR_k\> -\frac{b_{k}}{2}   {\rm Re}
       \< D(\na\chi_{A,k}R)-\na\chi_{A,k}D R,\nabla R_k\>+\calo_A(X(t)).
\ea\ee
Then, comparing  \eqref{est-l-2} with \eqref{est-l-3}
we find that
\be\ba\label{est-l4-5}
&|\frac{b_{k}}{2}   {\rm Re}
       \< \na\chi_A(\frac{x-\alpha_{k}}{\lambda_{k}})( \nabla R_k + \na R\Phi_k), DR\>
-b_{k}   {\rm Re}
       \< \na\chi_A(\frac{x-\alpha_{k}}{\lambda_{k}})\nabla R_k, DR_k\>|\\
\leq& C\frac{b_{k}}{\lbb_k}  |{\rm Re}
       \< \Delta\chi_{A,k}R, DR-DR_k\>|
       +C b_{k}|   {\rm Re}
       \< D(\na\chi_{A,k}R)-\na\chi_{A,k}D R,\na R-\nabla R_k\>|+\calo_A(X(t)).
\ea\ee

Thus, the proof of \eqref{clm-loc2} is now reduced to proving that the R.H.S.
of in \eqref{est-l4-5} is of order $\calo_A(X(t))$.

To this end, it is easy to see from  \eqref{clm-loc1} that
\be\label{est-l4-11}
\frac{b_{k}}{\lbb_k}  |{\rm Re}
       \< \Delta\chi_{A,k}R, DR-DR_k\>|=\calo_A(X(t)).
\ee
Moreover, one has
\be\ba \label{est-l4-6}
& b_{k}   {\rm Re}
       \< D(\na\chi_{A,k}R)-\na\chi_{A,k}D R,\na R-\nabla R_k\> \\
=&b_{k}   \sum_{l\neq k}{\rm Re}
       \< D(\na\chi_{A,k}R)-\na\chi_{A,k}D R,\na R_l\>\\
=&b_{k}   \sum_{l\neq k}{\rm Re}
       \< D(\na\chi_{A,k}R)-\na\chi_{A,k}D R,R\na \Phi_l\>
+ b_{k}   \sum_{l\neq k}{\rm Re}
       \< D(\na\chi_{A,k}R)-\na\chi_{A,k}D R,\na R\Phi_l\>.
\ea\ee

We first estimate the first term on the right-hand side of \eqref{est-l4-6}
by using H\"older's inequality,
\be \label{est-l4-6.1}
|b_{k}   {\rm Re}
       \< D(\na\chi_{A,k}R)-\na\chi_{A,k}D R,R\na \Phi_l\>|
       \leq Cb_k\|(D(\na\chi_{A,k}R)-\na\chi_{A,k}D R)\na \Phi_l\|_{L^2}\|R\|_{L^2}.
\ee
Applying Lemma \ref{lem-l2-pro} we have for $l\neq k$,
\be\ba\label{est-l4-7}
&\|(D(\na\chi_{A,k}R)-\na\chi_{A,k}D R)\na \Phi_l\|_{L^2}\\
\leq& \|D(\na\chi_{A,k}\na\Phi_lR)-\na\chi_{A,k}\na \Phi_lD R\|_{L^2}
+\|D(\na\chi_{A,k}\na\Phi_lR)-D(\na\chi_{A,k}R)\na\Phi_l\|_{L^2}\\
\leq& C\|\mathcal{F}(D(\na\chi_{A,k}\na \Phi_l))\|_{L^1}\|R\|_{L^2}+\|\mathcal{F}(D\na\Phi_l)\|_{L^1}\|\na\chi_{A,k}R\|_{L^2}.
\ea\ee
Note that, there exists $C(A)>0$ such that
\ben
|\mathcal{F}(\na\chi_{A,k}\na \Phi_l)(\xi)|
\leq
C\int |\na\chi_A(\frac{x-\alpha_{k}}{\lambda_{k}})
\Phi_k(x)\na \Phi_l(x)|dx\leq C(A),
\enn
and  the exponential decay property of $\na^\nu\chi(y)$, $2\leq \nu\leq 4$,
and the boundedness of $\na\chi(y)$ yield that, for $|\xi|>1$,
\ben\ba
|\mathcal{F}(\na\chi_{A,k}\na \Phi_l)(\xi)|
\leq& \frac{C}{|\xi|^3}\int\na^3(\na\chi_{A,k}\na \Phi_l)dx\\
\leq& \frac{C}{|\xi|^3}
\int \sum\limits_{\nu_1+\nu_2=3} \lbb_k^{-N_1}
|\na^{1+\nu_1} \chi_{A,k}(x)  \na^{1+\nu_2} \Phi_l(x)| dx \\
\leq& \frac{C}{|\xi|^3}(\int_{|x-x_k|\geq 4\sigma}\lbb_k^{-3}|\na^4\chi_{A}(\frac{x-\alpha_{k}}{\lambda_{k}})|
dx+1)\\
\leq& C(A)|\xi|^{-3}(\lbb_k^{-2}e^{-\frac{C}{\lbb_k}}+1)\leq C(A)|\xi|^{-3}.
\ea\enn
So it follows that
\be\ba\label{est-l4-8}
\|\mathcal{F}(D(\na\chi_{A,k}\na \Phi_l))\|_{L^1}
= \int |\xi||\mathcal{F}(\na\chi_{A,k}\na \Phi_l)|(\xi) d\xi
\leq C(A).
\ea\ee
We also have $\|\mathcal{F}(D\na\Phi_l)\|_{L^1}\leq C$, since $\na\Phi(x)\in \mathscr{S}$.
Plugging this fact and \eqref{est-l4-8} into \eqref{est-l4-7} we come to
\ben
\|(D(\na\chi_{A,k}R)-\na\chi_{A,k}D R)\na \Phi_l\|_{L^2}\leq C(A)\|R\|_{L^2},
\enn
which, via \eqref{est-l4-6.1}, yields that
\be\label{est-l4-9}
|b_{k}   {\rm Re}
       \< D(\na\chi_{A,k}R)-\na\chi_{A,k}D R,R\na \Phi_l\>|
       \leq C(A)b_k\|R\|_{L^2}^2.
\ee

It remains to estimate the second term on the right-hand side of \eqref{est-l4-6}. We claim that
\be\label{est-l-com}
|b_{k}   {\rm Re}
       \< D(\na\chi_{A,k}R)-\na\chi_{A,k}D R,\na R\Phi_l\>|
\leq C(A)b_k \|R\|_{H^\half}^2.
\ee
To show this, we let $Tg:=\na ((D(\na\chi_{A,k}g)-\na\chi_{A,k}D g)\Phi_l)$. On one hand,
using integrating by parts we have
\ben
\<f, Tg\>=-\<\na f\Phi_l, D(\na\chi_{A,k}g)-\na\chi_{A,k}D g)\>.
\enn
Then it follows from Lemma \ref{lem-l2-pro2} and the exponential decay of $\na^\nu\chi(y)$ with $\nu\geq 2$ that
\ben\ba
|\<f, Tg\>|
&\leq \|\na f\|_{L^2}\|(D(\na\chi_{A,k}g)-\na\chi_{A,k}D g)\Phi_l\|_{L^2}\\
&\leq \|\na f\|_{L^2}\(\|D(\na\chi_{A,k}\Phi_l g)-\na\chi_{A,k}\Phi_lD g\|_{L^2}
+\|D(\na\chi_{A,k}g\Phi_l)-D(\na\chi_{A,k} g)\Phi_l\|_{L^2}\)\\
&\leq C\|\na f\|_{L^2}\(\|\na(\na\chi_{A,k} \Phi_l)\|_{L^\9}\|g\|_{L^2}+\|\na \Phi_l\|_{L^\9}\|\na\chi_{A,k}g\|_{L^2}\)\\
&\leq C(A)\|f\|_{H^1}\| g\|_{L^2}.
\ea\enn
On the other hand,  we get
\ben\ba
\<f, Tg\>
=\<f\na\Phi_l, D(\na\chi_{A,k}g)-\na\chi_{A,k}D g\>+\<f\Phi_l, D\na(\na\chi_{A,k}g)-\na(\na\chi_{A,k}D g)\>,
\ea\enn
which along with Lemma \ref{lem-l2-pro2} and the exponential decay of $\na^\nu\chi(y)$  with $\nu\geq 2$ yields that
\ben\ba
|\<f, Tg\>|&\leq \|f\|_{L^2}\left(\|\(D(\na\chi_{A,k}g)-\na\chi_{A,k}D g\)\na\Phi_l\|_{L^2}
+\|\(D(\na\chi_{A,k}\na g)-\na\chi_{A,k}D \na g\)\Phi_l\|_{L^2}\right.\\
&\left.\quad+\|D(\na(\na\chi_{A,k})g\Phi_l)-D(\na(\na\chi_{A,k}) g)\Phi_l\|_{L^2}+\|D(\na(\na\chi_{A,k})g\Phi_l)-\na(\na\chi_{A,k})\Phi_lDg\|_{L^2}\right)\\
&\leq C\|f\|_{L^2} \big(\|\na(\na\chi_{A,k})\|_{L^\9}\|g\|_{L^2}
        + \|\na(\na\chi_{A,k})\|_{L^\9} \|\na g\|_{L^2} \|\Phi_l\|_{L^2} \\
&\qquad  \qquad\ +\|\na \Phi_l\|_{L^\9}\|\na(\na\chi_{A,k}) g\|_{L^2}+\|\na(\na(\na\chi_{A,k})\Phi_l)\|_{L^\9}\|g\|_{L^2} \big)\\
&\leq C(A) \|f\|_{L^2} (\lbb_k^{-1} \|g\|_{L^2} + \|\na g\|_{L^{2}}).
\ea\enn
By taking $f=g=R$ and using interpolation theory
(see the proof of \cite[Lemma F.1]{K-L-R}) we obtain
\ben
|\< D(\na\chi_{A,k}R)-\na\chi_{A,k}D R,\na R\Phi_l\>|=|\<f, Tg\>|\leq C(A)(\|R\|^2_{\dot H^\half}+\lbb_k^{-1}\|R\|_{L^2}^2),
\enn
which yields \eqref{est-l-com}, as claimed.

Thus, combining \eqref{est-l4-6}, \eqref{est-l4-9} and \eqref{est-l-com}
we arrive at
\be\label{est-l4-10}
|b_{k}   {\rm Re}
       \< D(\na\chi_{A,k}R)-\na\chi_{A,k}D R,\na R-\nabla R_k\>|
       \leq C(A)b_k(\|R\|^2_{\dot H^\half}+\lbb_k^{-1}\|R\|_{L^2}^2).
\ee
Then, inserting \eqref{est-l4-11} and \eqref{est-l4-10} into \eqref{est-l4-5}, we conclude
\ben\ba
|\frac{b_{k}}{2}   {\rm Re}
       \< \na\chi_A(\frac{x-\alpha_{k}}{\lambda_{k}})( \nabla R_k + \na R\Phi_k), DR\>
-b_{k}   {\rm Re}
       \< \na\chi_A(\frac{x-\alpha_{k}}{\lambda_{k}})\nabla R_k, DR_k\>|=\calo_A(X(t)),
\ea\enn
and thus  \eqref{clm-loc2}  is proved, as claimed.

{\it Estimate of $\mathfrak{L}_{34.3}$ and $ \mathfrak{L}_{34.4}$.}
We now decouple the interaction between the remainder $R$ and the nonlinearity $f(u)$ in \eqref{est-l3l4}. We expand the nonlinearity $f(u)$ to get
\be\ba\label{est-l-9}
 -(\mathfrak{L}_{34.3} + \mathfrak{L}_{34.4})
=&     \frac{b_{k}}{2\lbb_k}{\rm Re}
       \<\Delta\chi_A(\frac{x-\alpha_{k}}{\lambda_{k}})R_k, 2|U|^2R+U^2\ol{R}\>
      +\frac{b_{k}}{2} {\rm Re}
       \< \na\chi_A(\frac{x-\alpha_{k}}{\lambda_{k}})( \nabla R_k + \na R\Phi_k),2|U|^2R+U^2\ol{R} \>\\
& +     {\rm Re}
       \<\frac{b_{k}}{2\lbb_k}\Delta\chi_A(\frac{x-\alpha_{k}}{\lambda_{k}})R_k+\frac{b_{k}}{2} \na\chi_A(\frac{x-\alpha_{k}}{\lambda_{k}})( \nabla R_k + \na R\Phi_k), 2U|R|^2+\ol{U}R^2+|R|^2R \>
\ea\ee

Again let us first estimate the quadratic terms of $R$ on the right-hand side of \eqref{est-l-9}.
Integration by parts and using Lemma \ref{Lem-P-ve-U}
we derive
\ben\ba
&\frac{b_{k}}{2}{\rm Re}\< \na\chi_A(\frac{x-\alpha_{k}}{\lambda_{k}})( \nabla R_k + \na R\Phi_k),2|U|^2R+U^2\ol{R} \>\\
&=b_{k}{\rm Re}\< \na\chi_A(\frac{x-\alpha_{k}}{\lambda_{k}})\na R_k,2|U|^2R+U^2\ol{R} \>
+\calo(|t|^{-1}\|R\|_{L^2}^2)\\
&=-\frac{b_{k}}{2\lbb_k}{\rm Re}
       \<\Delta\chi_A(\frac{x-\alpha_{k}}{\lambda_{k}})R_k, 2|U|^2R+U^2\ol{R}\>
-b_{k}{\rm Re}\< \na\chi_A(\frac{x-\alpha_{k}}{\lambda_{k}})\na U\Phi_k,2U|R|^2+\ol{U}R^2 \>
+\calo(|t|^{-1}\|R\|_{L^2}^2).
\ea\enn
This yields that
\be\ba\label{est-l-4}
&\frac{b_{k}}{2\lbb_k}{\rm Re}
       \<\Delta\chi_A(\frac{x-\alpha_{k}}{\lambda_{k}})R_k, 2|U|^2R+U^2\ol{R}\>
+\frac{b_{k}}{2}{\rm Re}\< \na\chi_A(\frac{x-\alpha_{k}}{\lambda_{k}})( \nabla R_k + \na R\Phi_k),2|U|^2R+U^2\ol{R} \>\\
&=-b_{k}{\rm Re}\< \na\chi_A(\frac{x-\alpha_{k}}{\lambda_{k}})\na U\Phi_k,2U|R|^2+\ol{U}R^2 \>
+\calo(|t|^{-1}\|R\|_{L^2}^2)\\
&=-b_{k}{\rm Re}\< \na\chi_A(\frac{x-\alpha_{k}}{\lambda_{k}})\na U_k,2U_k|R_k|^2+\ol{U}_kR_k^2 \>
+\calo(|t|X(t)).
\ea\ee
where in the last step we also applied Lemma \ref{lem-dep}.

Regarding the higher order terms on the right-hand side of \eqref{est-l-9},
using the Sobolev embeddings, \eqref{R-Tt}  and Lemma \ref{Lem-P-ve-U} we get
\be\ba\label{est-l-6}
&|\frac{b_{k}}{2\lbb_k}{\rm Re}
       \<\Delta\chi_A(\frac{x-\alpha_{k}}{\lambda_{k}})R_k, 2U|R|^2+\ol{U}R^2+|R|^2R\>|\leq C(A)\frac{b_{k}}{\lbb_k}(\|R\|_{L^4}^3\|U\|_{L^4}+\|R\|_{L^4}^4)\\
&\leq C(A)\frac{b_{k}}{\lbb_k}(\|R\|_{L^2}^{\frac{3}{2}}\|D^\half R\|_{L^2}^{\frac32}\|U\|_{L^2}^\half\|D^\half U\|_{L^2}^\half+\|R\|_{L^2}^2\|D^\half R\|_{L^2}^2)\\
&\leq C(A)(\lbb^{-\frac34}\|R\|_{L^2}^{\frac{3}{2}}\|D^\half R\|_{L^2}^{\frac32}+\lbb^{-\half}\|R\|_{L^2}^2\|D^\half R\|_{L^2}^2)\leq C(A)X(t),
\ea\ee
and
\be\label{est-l-7}
|\frac{b_{k}}{2} {\rm Re}\< \na\chi_A(\frac{x-\alpha_{k}}{\lambda_{k}})R\na\Phi_k, 2U|R|^2+\ol{U}R^2+|R|^2R \>|\leq C(A)|b_k|(\|U\|_{L^4}\|R\|_{L^4}^3+\|R\|_{L^4}^4)\leq C(A)X(t).
\ee
Moreover, the a priori estimate \eqref{R-Tt} implies that $\|D^{\half+\varsigma}R\|_{L^2}\leq C$.
Combining this fact with the fractional chain rule  and \eqref{est-hs}
we derive
\be\ba\label{est-l-8}
&|\frac{b_k}{2}{\rm Re}\< \na\chi_A(\frac{x-\alpha_{k}}{\lambda_{k}})\na R\Phi_k, 2U|R|^2+\ol{U}R^2+|R|^2R \>|\\
&\leq C(A)|b_k|\|R\|_{\dot{H}^\half}(
\|\na\chi_A(\frac{x-\alpha_{k}}{\lambda_{k}})\Phi_k(2U|R|^2+\ol{U}R^2)\|_{\dot{H}^\half}
+\|\na\chi_A(\frac{x-\alpha_{k}}{\lambda_{k}})\Phi_k|R|^2R\|_{\dot{H}^\half})\\
&\leq C(A)|b_k|\|R\|_{\dot{H}^\half}(\|\na\chi_{A,k}\|_{L^\9}\|U\|_{L^\9}\|R\|_{L^\9}
\|R\|_{\dot{H}^\half}
+\|\na\chi_{A,k}\|_{L^\9}\|R\|_{L^\9}^2\|U\|_{\dot{H}^\half}
+\|U\|_{L^\9}\|R\|_{L^\9}^2\|\na\chi_{A,k}\|_{\dot{H}^\half}\\
&\quad+\|R\|_{L^\9}^3\|\na\chi_{A,k}\|_{\dot{H}^\half}
+\|\na\chi_{A,k}\|_{L^\9}\|R\|_{L^\9}^2\|R\|_{\dot{H}^\half})\\
&\leq C(A)|t|^{3-\kappa}\(|t|^{-1}\(\ln(2+\|R\|_{H^\half}^{-1})\)^{\half}
+|t|^{-2}\ln(2+\|R\|_{H^\half}^{-1})\)
\|R\|_{H^\half}^2\\
&\leq C(A)\(\ln(2+\|R\|_{H^\half}^{-1})\)^{\half}X(t).
\ea\ee
Thus, it follows from \eqref{est-l-6}, \eqref{est-l-7} and \eqref{est-l-8} that
the higher order terms can be bounded by
\be\ba\label{est-l-5}
&|{\rm Re}
       \<\frac{b_{k}}{2\lbb_k}\Delta\chi_A(\frac{x-\alpha_{k}}{\lambda_{k}})R_k+\frac{b_{k}}{2} \na\chi_A(\frac{x-\alpha_{k}}{\lambda_{k}})( \nabla R_k + \na R\Phi_k), 2U|R|^2+\ol{U}R^2+|R|^2R \>|\\
&\leq C(A)\(\ln(2+\|R\|_{H^\half}^{-1})\)^{\half}X(t).
\ea\ee

Consequently,
inserting \eqref{est-l-4} and \eqref{est-l-5} into \eqref{est-l-9}
we arrive at
\be\ba\label{est-l-10}
-(\mathfrak{L}_{34.3} + \mathfrak{L}_{34.4})
=-b_{k}{\rm Re}\< \na\chi_A(\frac{x-\alpha_{k}}{\lambda_{k}})\na U_k,2U_k|R_k|^2+\ol{U}_kR_k^2 \>
+\calo_A\(\(\ln(2+\|R\|_{H^\half}^{-1})\)^{\half}X(t)\).
\ea\ee

{\it Estimate of $\mathfrak{L}_{34.5} + \mathfrak{L}_{34.6}$.}
We estimate the last two terms in \eqref{est-l3l4} involving the error $\eta$ .
Integration by parts yields that
\ben\ba
 -(\mathfrak{L}_{34.5} + \mathfrak{L}_{34.6})
 =-\frac{b_{k}}{2\lbb_k}{\rm Re}\<\Delta\chi_A(\frac{x-\alpha_{k}}{\lambda_{k}})R_k, \eta \>
-b_k{\rm Re}
       \< \na\chi_A(\frac{x-\alpha_{k}}{\lambda_{k}})R_k , \na\eta \>
       -\frac{b_{k}}{2}{\rm Re}
       \< \na\chi_A(\frac{x-\alpha_{k}}{\lambda_{k}})R\na\Phi_k, \eta \>,
\ea\enn
which, via the Cauchy-Schwartz inequality and Lemma \ref{Lem-P-ve-U},
can be bounded by
\be\ba\label{est-l-11}
&C(A)b_k(\lbb_k^{-1}\|R\|_{L^2} \|\eta\|_{L^2}+\|R\|_{L^2}\|\na \eta\|_{L^2}+\|R\|_{L^2}\|\eta\|_{L^2})\\
\leq& C(A)(\frac{\|R\|_{L^2}^2}{t^2}+\frac{\|R\|_{L^2}^2}{|t|^{3-\delta}}+|t|^{3-\delta})
\leq C(A)(\frac{\|R\|_{L^2}^2}{|t|^{3-\delta}}+|t|^{3-\delta}).
\ea\ee

Finally, plugging \eqref{clm-loc1}, \eqref{clm-loc2}, \eqref{est-l-10} and \eqref{est-l-11} into \eqref{est-l3l4}
we complete the proof of Proposition \ref{prop-I2t}.
\end{proof}

By virtue of Propositions \ref{prop-I1t} and \ref{prop-I2t},
we are now able to prove Proposition \ref{prop-I-mono}.

{\bf Proof of Proposition \ref{prop-I-mono}.}
Applying Propositions \ref{prop-I1t} and \ref{prop-I2t},
we are now able to decouple  the time derivative of $\mathfrak{I}$ into $K$  parts and derive
\be\ba\label{est-pf-mo-1}
\frac{d\mathfrak{I}}{dt}
\geq&\sum_{k=1}^{K}\left(\frac{b_k}{2\lambda_k^2} \|R_k\|_{L^2}^2
-\frac{b_k}{2\lbb_{k}}{\rm Re}\int 2|U_k|^2|R_k|^2+\ol{U}_k^2R_k^2dx\right.\\
&\quad\left.+\frac{b_{k}}{2\lbb_k}   {\rm Re}
       \< \Delta\chi_A(\frac{x-\alpha_{k}}{\lambda_{k}})R_k, DR_k \>+
       b_{k}   {\rm Re}
       \< \na\chi_A(\frac{x-\alpha_{k}}{\lambda_{k}})\nabla R_k, DR_k\>\right.\\
    & \quad\left.+b_{k}{\rm Re}\< (\na\chi_A(\frac{x-\alpha_{k}}{\lambda_{k}})
    -\frac{x-\a_k}{\lbb_k})\cdot\na U_k,2U_k|R_k|^2+\ol{U}_kR_k^2 \>
   \right)\\
&+\calo(C(A)\(\ln(2+\|R\|_{H^\half}^{-1})\)^{\half}X(t)
+\frac{\|R\|_{L^2}^2}{|t|^{3-\delta}}+|t|^{3-\delta}).
\ea\ee
Then each part can be treated separately by using similar
arguments as in  the single-bubble case \cite{K-L-R}.

To control the terms in the second line of \eqref{est-pf-mo-1},  we follow the functional calculus argument as in the proof of  \cite[Lemma 6.1]{K-L-R}. In fact, for $1\leq k\leq K$, we can get that
\be\ba\label{est-l3-l4}
&\frac{b_{k}}{2\lbb_k}   {\rm Re}
       \< \Delta\chi_A(\frac{x-\alpha_{k}}{\lambda_{k}})R_k, DR_k \>+
       b_{k}   {\rm Re}
       \< \na\chi_A(\frac{x-\alpha_{k}}{\lambda_{k}})\nabla R_k, DR_k\>\\
&=\frac{b_k}{2\lbb_k}\int_{s=0}^{+\9}\sqrt{s}\int\Delta \chi_A(\frac{x-\a_k}{\lbb_k})|\na R_{k,s}|^2dxds-\frac{b_k}{8\lbb_k^2}\int_{s=0}^{+\9}\sqrt{s}\int\Delta^2 \chi_A(\frac{x-\a_k}{\lbb_k})| R_{k,s}|^2dxds.
\ea\ee
Here
\ben
R_{k,s}(t,x):=\frac{1}{\sqrt{2\pi s}}\int e^{-\sqrt{s}|x-y|}R_k(t,y)dy
\enn
 solves the equation
\ben
-\Delta R_{k,s}+sR_{k,s}=\sqrt{\frac{2}{\pi}}R_k, \quad for \;s>0.
\enn

Then, inserting \eqref{est-l3-l4} into \eqref{est-pf-mo-1} we get
\be\ba\label{est-pf-mo-2}
  \frac{d\mathfrak{I}}{dt}
\geq &\sum_{k=1}^{K}\left(\frac{b_k}{2\lambda_k^2} \|R_k\|_{L^2}^2+\frac{b_k}{2\lbb_k}\int_{s=0}^{+\9}\sqrt{s}\int\Delta \chi(\frac{x-\a_k}{\lbb_k})|\na R_{k,s}|^2dxds\right.\\
&\quad\left.-\frac{b_k}{8\lbb_k^2}\int_{s=0}^{+\9}\sqrt{s}\int\Delta^2 \chi_A(\frac{x-\a_k}{\lbb_k})| R_{k,s}|^2dxds -\frac{b_k}{2\lbb_{k}}{\rm Re}\int 2|U_k|^2|R_k|^2+\ol{U}_k^2R_k^2dx\right.\\
&\quad\left.+b_k{\rm Re} \int (\na\chi_A(\frac{x-\alpha_{k}}{\lambda_{k}})-\frac{x-\a_k}{\lbb_k})\cdot \nabla \ol{U}_k(\ol{U}_kR_k^2+2U_k|R_k|^2)dx\right)\\
 &  +\calo(C(A)\(\ln(2+\|R\|_{H^\half}^{-1})\)^{\half}X(t)
+\frac{\|R\|_{L^2}^2}{|t|^{3-\delta}}+|t|^{3-\delta}).
  \ea\ee
Set
\be\label{ren2}
R_k :=\lbb_k^{-\half}\varepsilon_k(t, \frac{x-\a_k}{\lbb_k})e^{i\g_k}, \quad with \;\varepsilon_k=\varepsilon_{k,1}+i\varepsilon_{k,2},\; 1\leq k\leq K.
\ee
Recall the fact that  $Q_k(y)=Q(y)+\calo(|t|\<y\>^{-2})$.
Plugging the renormalization \eqref{ren2} into \eqref{est-pf-mo-2}, we  obtain
\be\ba\label{est-pf-mo-3}
  \frac{d\mathfrak{I}}{dt}
\geq &\sum_{k=1}^{K}\frac{b_k}{2\lambda_k^2} \( \|\varepsilon_k\|_{L^2}^2+\int_{s=0}^{+\9}\sqrt{s}\int
\chi^{\prime\prime}(\frac yA)|\na \varepsilon_{k,s}|^2dyds
-\int 3Q^2\varepsilon_{k,1}^2+Q^2\varepsilon_{k,2}^2dy\)\\
&+\sum_{k=1}^{K}\frac{b_k}{\lbb^2_k}({\rm Re} \int (A\chi^\prime(\frac yA)-y)\cdot \nabla \ol{Q}_k(\ol{Q}_k\varepsilon_k^2+2Q_k|\varepsilon_k|^2)dx-\frac{1}{8A^2}
\int_{s=0}^{+\9}\sqrt{s}\int \chi^{\prime\prime\prime\prime}(\frac yA)| \varepsilon_{k,s}|^2dyds )\\
 & +\calo(C(A)\(\ln(2+\|R(t)\|_{H^\half}^{-1})\)^{\half}X(t)
+\frac{\|R\|_{L^2}^2}{|t|^{3-\delta}}+|t|^{3-\delta}).
  \ea\ee

Note that, by the orthogonality conditions \eqref{ortho-cond-Rn-wn}, Lemmas \ref{lem-dep},
\ref{Lem-P-ve-U} and  \ref{lem-mass-local},
the unstable directions of $\varepsilon_k$, $1\leq k\leq K$, can be controlled by
\ben
Scal(\varepsilon_k)=\calo(t^2\|R\|^2_{L^2}+|t|^{8-2\delta}),
\enn
where $Scal(\varepsilon_k)$ is defined as in \eqref{Scal-def} below.

We also recall from \cite[Proposition B.1]{K-L-R} the coercivity estimate
that, there exits $C>0$ such that for $A$ large enough, $1\leq k\leq K$,
\begin{align}\label{est-pf-mo-4}
&\|\varepsilon_k\|_{L^2}^2+\int_{s=0}^{+\9}\sqrt{s}\int
\chi^{\prime\prime}(\frac yA)|\na \varepsilon_{k,s}|^2dyds
-\int 3Q^2\varepsilon_{k,1}^2+Q^2\varepsilon_{k,2}^2dy  \nonumber \\
\geq& C \|\ve_k\|_{L^2}^2-\frac{1}{C}Scal(\ve_k)
\geq  C\|R_k\|_{L^2}^2-\frac{1}{C}|t|^{8-2\delta}.
\end{align}

Moreover, by the definition of the cut-off function $\chi$,
$A\chi^\prime(\frac yA)-y=0$ for $|y|\leq A$.
Then, it follows from the decay property $Q_k(y)\leq C\<y\>^{-2}$  that
\be\label{est-pf-mo-5}
|\int (A\chi^\prime(\frac yA)-y)\cdot \nabla \ol{Q}_k(\ol{Q}_k\varepsilon_k^2+2Q_k|\varepsilon_k|^2)dy|
\leq C \sup\limits_{y\geq A} \frac{A+|y|}{|y|^2}\|\varepsilon_k\|_{L^2}^2\leq \frac{C}{A}\|R\|_{L^2}^2.
\ee
In view of \cite[Lemma B.3]{K-L-R}, one also has
\be\label{est-pf-mo-6}
|\frac{1}{8A^2}
\int_{s=0}^{+\9}\sqrt{s}\int_{\R} \chi^{\prime\prime\prime\prime}(\frac yA)| \varepsilon_{k,s}|^2dyds|\leq \frac{C}{A}\|R\|_{L^2}^2.
\ee

Therefore, inserting \eqref{est-pf-mo-4}, \eqref{est-pf-mo-5} and \eqref{est-pf-mo-6} into \eqref{est-pf-mo-3}
we obtain that there exists $C>0$ such that for $A$ large enough,
\ben\ba
  \frac{d\mathfrak{I}}{dt}
\geq C\frac{\|R\|_{L^2}^2}{|t|^3} +\calo\(C(A)\(\ln(2+\|R(t)\|_{H^\half}^{-1})\)^{\half}X(t)+|t|^{3-\delta}\),
  \ea\enn
thereby finishing the proof of Proposition \ref{prop-I-mono}.
\hfill $\square$

\subsection{Coercivity of generalized energy functional} \label{Subsec-Coer-Gen-Energy}
In this subsection, we further prove the coercivity type control of the generalized energy functional,
which is the content of Proposition \ref{lem-est-ge} below.

\begin{proposition} [Coercivity type control of the generalized energy]  \label{lem-est-ge}
For all $t\in [T_*, T]$, there exist $C_1, C_2>0$ such that for $A$ sufficiently large,
\begin{align}   \label{esti-X-I}
C_1X(t)-\frac{1}{C_1}t^{6-2\delta}\leq \mathfrak{I}(t)\leq C_2X(t).
\end{align}
\end{proposition}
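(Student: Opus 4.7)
The plan is to prove the two inequalities separately. The upper bound $\mathfrak{I}(t)\leq C_2X(t)$ is straightforward: the kinetic term contributes $\tfrac{1}{2}\|D^{\half}R\|_{L^2}^2\leq X(t)$; the localized mass satisfies $\sum_k\lbb_k^{-1}\int|R|^2\Phi_k\,dx\lesssim t^{-2}\|R\|_{L^2}^2\leq X(t)$ by \eqref{est-pa}; the nonlinear difference $F(u)-F(U)-f(U)\ol R$, expanded as a quadratic remainder, produces terms of the form $|U|^2|R|^2$, $|U||R|^3$ and $|R|^4$, all of which are $\calo(X(t))$ by the Sobolev embedding $H^{\half}\hookrightarrow L^p$ together with the a priori bounds on $\|U\|_{L^\infty}$ and \eqref{R-Tt}; and the localized virial term is bounded by $\sum_k|b_k|\|\na\chi_A\|_{L^\infty}\|D^{\half}R\|_{L^2}^2\lesssim|t|X(t)$ via Lemma \ref{lem-inter}.

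For the lower bound, the strategy mimics the single-bubble case of \cite{K-L-R} but applied bubble-by-bubble after a localization. Using the partition of unity $1=\sum_k\Phi_k$ and the renormalization \eqref{ren2}, one aims to write
\begin{align*}
\mathfrak{I}(R)\geq \tfrac{1}{2}\|D^{\half}R\|_{L^2}^2+\sum_{k=1}^K\frac{1}{\lbb_k}\big(\<L_+\varepsilon_{k,1},\varepsilon_{k,1}\>+\<L_-\varepsilon_{k,2},\varepsilon_{k,2}\>\big)+\calo\big(\text{cross terms}+|t|^{6-2\delta}\big),
\end{align*}
by pairing the $\lbb_k^{-1}\int|R|^2\Phi_k\,dx$ mass piece with the quadratic part of $F(u)-F(U)-f(U)\ol R$ around $U_k$, while the virial contribution is absorbed as an $\calo(|t|X(t))$ perturbation. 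Invoking the classical coercivity of the linearized operator around $Q$ (see e.g.\ Proposition B.1 in \cite{K-L-R}) on the subspace orthogonal to its kernel, together with the four orthogonality conditions in \eqref{ortho-cond-Rn-wn} and the fifth unstable direction ${\rm Re}\<U_k,R\>$ controlled by Lemma \ref{lem-mass-local} (which introduces the error $\calo(|t|^{8-2\delta})\subseteq\calo(|t|^{6-2\delta})$), yields $\sum_k\lbb_k^{-1}\|\varepsilon_k\|_{L^2}^2\gtrsim t^{-2}\|R\|_{L^2}^2$ up to the acceptable error, which combines with the kinetic term to close the bound $C_1X(t)\leq\mathfrak{I}(t)+\tfrac{1}{C_1}t^{6-2\delta}$.

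The main obstacle is the decoupling of the non-local kinetic energy and the localized nonlinearity into clean per-bubble pieces. Because $D^{\half}$ does not commute with the cut-offs $\Phi_k$, one cannot simply write $\|D^{\half}R\|_{L^2}^2=\sum_k\|D^{\half}R_k\|_{L^2}^2$; the mismatch is exactly a commutator, which by the Calder\'on estimate in Lemma \ref{lem-l2-pro2} is bounded by $\|R\|_{L^2}^2\lesssim|t|^2X(t)$ and hence absorbed. Similarly, the spatially separated cross-contributions between different bubbles in $F(u)-F(U)-f(U)\ol R$ are treated via Lemma \ref{lem-dep}, producing errors of order $\calo(|t|^4\|R\|_{L^2}^2+|t|^{6-2\delta})$ thanks to the pointwise decay \eqref{Q-decay} and \eqref{est-pa}. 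Once these decoupling losses are shown to be negligible, the argument reduces to $K$ independent coercivity estimates for the linearized operator, and the conclusion \eqref{esti-X-I} follows by choosing $A$ large enough in Proposition B.1 of \cite{K-L-R}.
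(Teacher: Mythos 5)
Your upper bound matches the paper's. But there is a genuine gap in the lower bound, and it is precisely the central technical point of this proposition.

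First, your claimed decomposition
\[
\mathfrak{I}(R)\geq \tfrac{1}{2}\|D^{\half}R\|_{L^2}^2+\sum_{k}\lbb_k^{-1}\big(\<L_+\varepsilon_{k,1},\varepsilon_{k,1}\>+\<L_-\varepsilon_{k,2},\varepsilon_{k,2}\>\big)+\calo(\cdots)
\]
is internally inconsistent: $(L\varepsilon_k,\varepsilon_k)$ already contains $\<D\varepsilon_k,\varepsilon_k\>=\lbb_k\<DR_k,R_k\>$, while you also keep $\half\|D^{\half}R\|_{L^2}^2$ as a standalone term, so the kinetic energy is counted twice. If instead you drop the $D$ piece from the per-bubble form and rely only on $\int|\varepsilon_k|^2-3Q^2\varepsilon_{k,1}^2-Q^2\varepsilon_{k,2}^2$, that quadratic form has no coercivity (it is negative along $\varepsilon_{k,1}=Q$), so you cannot close the bound at all.

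Second, and more fundamentally, once you split the kinetic energy by the partition of unity $1=\sum_k\Phi_k$ (which is the only way to get a per-bubble structure, since $D$ is nonlocal), the quadratic form you face per bubble is
\[
\int\big(|D^{\half}\epsilon_k|^2+|\epsilon_k|^2\big)\phi_{A}\,dy-\int 3Q^2\epsilon_{k,1}^2+Q^2\epsilon_{k,2}^2\,dy,
\]
i.e.\ the weight sits on the kinetic and mass pieces but not on the potential. This is \emph{not} $(L\epsilon_k,\epsilon_k)$, and the global coercivity you invoke (which you call ``Proposition B.1 in \cite{K-L-R}'', i.e.\ essentially Lemma~\ref{Lem-coer} here) is not applicable. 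The Calder\'on commutator estimate you suggest does not bridge this gap either: Lemma~\ref{lem-l2-pro2} controls $[D,\Phi]$, not $[D^{\half},\Phi]$, and in any case the problem is not a commutator remainder --- it is that the localized quadratic form needs its own coercivity statement. That statement is exactly the new Lemma~\ref{Lem-coer-local} in this paper, proved in Appendix~B by a dedicated argument (writing $\tilde f=f\phi_A^{1/2}$, controlling the mismatch $g=D^{\half}f\,\phi_A^{\half}-D^{\half}\tilde f$ via the integral representation of $D^{\half}$ and the algebraic decay of $\phi_A$, then reducing to Lemma~\ref{Lem-coer}). Until you prove something like Lemma~\ref{Lem-coer-local}, the lower bound is not established. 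You also do not address the residual $K_2=\sum_k\int(|D^{\half}R|^2+\lbb_k^{-1}|R|^2)(\Phi_k-\phi_{A,k})\,dx$, whose sign and size must be handled carefully (the paper uses $\Phi_k(\lbb_k y+\a_k)\geq\phi_A(y)$ on $|y|\leq\sigma/\lbb_k$ and the bound $\phi_A(y)\lesssim\lbb_k^a$ beyond).
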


\begin{proof}
Using Lemmas \ref{lem-inter} and \ref{lem-dep} and the Sobolev embeddings
we expand
\be\ba\label{est-en}
\mathfrak{I}(t)&=\frac{1}{2}{\rm Re}\int |D^\half R|^2+\sum_{k=1}^K\frac{1}{\lambda_{k}} |R|^2 \Phi_k
-2|U|^2|R|^2-U^2\ol{R}^2dx+ o(X(t))\\
&=\frac{1}{2}{\rm Re}\int |D^\half R|^2+\sum_{k=1}^K\frac{1}{\lambda_{k}} |R|^2 \Phi_k
-\sum_{k=1}^{K}(2|U_k|^2|R|^2-U_k^2\ol{R}^2)dx+ o(X(t)).
\ea\ee
Using the fact that $\|U(t)\|_{L^\9}\leq Ct^{-2}$,
we immediately obtain the upper bound that for some $C_2>0$,
\ben
\mathfrak{I}(t)\leq C_2X(t).
\enn

It remains to prove the lower bound
that is, the coercivity estimate in \eqref{esti-X-I}.
Using the partition of unity $\sum_{k=1}^{K}\Phi_k=1$
we rewrite the second order terms in \eqref{est-en} as
\be\ba\label{est-en1}
&\sum_{k=1}^{K}{\rm Re}\int (|D^\half R|^2+\frac{1}{\lambda_{k}} |R|^2) \Phi_k
-2|U_k|^2|R|^2-U_k^2\ol{R}^2dx\\
=&\sum_{k=1}^{K}{\rm Re}\int (|D^\half R|^2+\frac{1}{\lambda_{k}} |R|^2) \phi_{A,k}
-2|U_k|^2|R|^2-U_k^2\ol{R}^2dx\\
&+\sum_{k=1}^{K}\int (|D^\half R|^2+\frac{1}{\lambda_{k}} |R|^2) (\Phi_k-\phi_{A,k})dx
=:K_1+K_2,
\ea\ee
where $\phi_{A,k}(x):=\phi_A(\frac{x-\a_k}{\lbb_k})$
with the cut-off function $\phi_A$ defined as in  Lemma \ref{Lem-coer-local}.

On one hand,  using $Q_k(y)=Q(y)+\calo(|t|\langle y\rangle^{-2})$, \eqref{Qk-approx}
and the renormalization \eqref{ren}
we derive
\ben\ba
K_1&=\sum_{k=1}^{K}\frac{1}{\lbb_k}{\rm Re}\int (|D^\half \epsilon_k|^2+|\epsilon_k|^2) \phi_{A}
-2|Q_k|^2|\epsilon_{k}|^2-Q_k^2\ol{\epsilon}_{k}^2dy\\
&=\sum_{k=1}^{K}\frac{1}{\lbb_k}\int (|D^\half \epsilon_k|^2+|\epsilon_k|^2) \phi_{A}
-3Q^2\epsilon_{k,1}^2-Q^2\epsilon_{k,2}^2dy+\calo(|t|^{-1}\|R\|^2_{L^2}).
\ea\enn
Moreover,  the orthogonality conditions \eqref{ortho-cond-Rn-wn},
Lemmas \ref{Lem-P-ve-U} and \ref{lem-mass-local} imply that
the unstable directions of $\epsilon_k$ can be controlled by
\ben
Scal(\epsilon_k)=\calo(t^2\|R\|^2_{L^2}+|t|^{8-2\delta}).
\enn
Thus, applying Lemmas \ref{Lem-coer-local} and  \ref{Lem-P-ve-U}
we infer that there exists $C_1>0$ such that for $A$ sufficiently large,
\be\ba\label{est-en-i}
K_1 &\geq \sum_{k=1}^{K}\frac{C_1}{\lbb_k}\int (|D^\half \epsilon_k|^2+|\epsilon_k|^2) \phi_{A}dy-\frac{1}{C_1}|t|^{6-2\delta}+o(X(t))\\
&\geq C_1\sum_{k=1}^{K}\int (|D^\half R|^2+\frac{1}{\lambda_{k}} |R|^2) \phi_{A,k}dx-\frac{1}{C_1}|t|^{6-2\delta}+o(X(t)).
\ea\ee

On the other hand, let $\wt{C}:=\min\{1, C_1\}$.
For $t$ close to 0 such that $|\a_k(t)-x_k|\leq \sigma$,
we derive that, after the renormalization,
\be\ba\label{est-en-ii}
K_2=&\sum_{k=1}^{K}\frac{1}{\lbb_k}\int (|D^\half \epsilon_k|^2+|\epsilon_k|^2) (\Phi_k(\lbb_k y+\a_k)-\phi_{A}(y))dy\\
\geq& \sum_{k=1}^{K}\frac{\wt C}{\lbb_k}\int_{|y|\leq \frac{\sigma}{\lbb_k}} (|D^\half \epsilon_k|^2+|\epsilon_k|^2) (\Phi_k(\lbb_k y+\a_k)-\phi_{A}(y))dy\\
&+\sum_{k=1}^{K}\frac{1}{\lbb_k}\int_{|y|\geq \frac{\sigma}{\lbb_k}} (|D^\half \epsilon_k|^2+|\epsilon_k|^2) (\Phi_k(\lbb_k y+\a_k)-\phi_{A}(y))dy,
\ea\ee
where we also used the fact that $\Phi_k(\lbb_k y+\a_k)-\phi_{A}(y)\geq 0$ for $|y|\leq \frac{\sigma}{\lbb_k}$, $1\leq k\leq K$.
Note that
\ben\ba
&\sum_{k=1}^{K}\frac{1}{\lbb_k}\int_{|y|\geq \frac{\sigma}{\lbb_k}} (|D^\half \epsilon_k|^2+|\epsilon_k|^2) (\Phi_k(\lbb_k y+\a_k)-\phi_{A}(y))dy\\
&\geq\sum_{k=1}^{K}\frac{\wt C}{\lbb_k}\int_{|y|\geq \frac{\sigma}{\lbb_k}} (|D^\half \epsilon_k|^2+|\epsilon_k|^2) (\Phi_k(\lbb_k y+\a_k)-\phi_{A}(y))dy
 -\sum_{k=1}^{K}\frac{1-\wt C}{\lbb_k}\int_{|y|\geq \frac{\sigma}{\lbb_k}} (|D^\half \epsilon_k|^2+|\epsilon_k|^2) \phi_{A}(y)dy.
\ea\enn
Since by the definition of $\phi_A$,
$|\phi_{A}(y)|\leq C\lbb_k^{a}$ for $|y|\geq \frac{\sigma}{\lbb_k}$ with $0<a<1$,
it follows that
\be\ba\label{est-en-ii1}
&\sum_{k=1}^{K}\frac{1}{\lbb_k}\int_{|y|\geq \frac{\sigma}{\lbb_k}} (|D^\half \epsilon_k|^2+|\epsilon_k|^2) (\Phi_k(\lbb_k y+\a_k)-\phi_{A}(y))dy\\
&\geq\sum_{k=1}^{K}\frac{\wt C}{\lbb_k}\int_{|y|\geq \frac{\sigma}{\lbb_k}} (|D^\half \epsilon_k|^2+|\epsilon_k|^2) (\Phi_k(\lbb_k y+\a_k)-\phi_{A}(y))dy- C\sum_{k=1}^{K}\frac{\lbb_k^a}{\lbb_k}\int (|D^\half \epsilon_k|^2+|\epsilon_k|^2) dy\\
&\geq\sum_{k=1}^{K}\frac{\wt C}{\lbb_k}\int_{|y|\geq \frac{\sigma}{\lbb_k}} (|D^\half \epsilon_k|^2+|\epsilon_k|^2) (\Phi_k(\lbb_k y+\a_k)-\phi_{A}(y))dy
+o(X(t)).
\ea\ee
Thus,
inserting \eqref{est-en-ii1} into \eqref{est-en-ii} we obtain
\be\label{est-en-ii2}
K_2\geq \sum_{k=1}^{K}\wt C\int (|D^\half R|^2+\frac{1}{\lbb_k}|R|^2) (\Phi_k-\phi_{A,k})dx
+o(X(t)).
\ee

Therefore, plugging \eqref{est-en-i} and \eqref{est-en-ii2} into \eqref{est-en1}
we obtain the lower bound
\ben
\mathfrak{I}(t)\geq C\int |D^\half R|^2+\sum_{k=1}^{K}\frac{1}{\lambda_{k}} |R|^2 \Phi_kdx- \frac{1}{C}|t|^{6-2\delta}+o(X(t))
\geq CX(t)-\frac{1}{C}|t|^{6-2\delta},
\enn
and thus finish the proof.
\end{proof}

\section{Proof of bootstrap estimates} \label{Sec-Proof-Boot}
Now we are ready to prove Theorem \ref{Thm-u-Boot}.

\begin{proof}[Proof of Theorem \ref{Thm-u-Boot}]
{\it $(i)$ Bootstrap estimate of $X(t)$.}
Applying Propositions \ref{prop-I-mono} and \ref{lem-est-ge},
and using $\mathfrak{I}(T)=0$
we obtain that for all $t\in[T_*,T]$,
\be\label{est-pf-b-1}
C_1X(t)-\frac{1}{C_1}|t|^{6-2\delta}\leq \mathfrak{I}(T)-\int_{t}^{T}\frac{d\mathfrak{I}}{ds}ds
\leq \int_{t}^{T}C(A)\(\ln(2+\|R(s)\|_{H^\half}^{-1})\)^{\half}X(s)+|s|^{3-\delta}ds.
\ee
For  $s$ close to 0, we have $\|R\|_{H^\half}\geq \half\lbb X^\half$ and $\ln X^{-1}(s)\leq X^{-p}(s)$ for  $p$ satisfying $(1-p)(4-2\delta)=3-\delta$. Thus by  \eqref{R-Tt} and Lemma \ref{Lem-P-ve-U}, we have
\be\ba\label{est-pf-b-2}
C(A)\(\ln(2+\|R(s)\|_{H^\half}^{-1})\)^{\half}X(s)&\leq C(A)\(\ln \lbb^{-1}(s)+\ln X^{-1}(s)\)X(s)\\
&\leq C(A)\(s^{-\half} X(s)+X^{1-p}(s)\)\leq \frac{C_1(4-2\delta)}{8}|s|^{3-2\delta}.
\ea\ee
Inserting \eqref{est-pf-b-2} into \eqref{est-pf-b-1}
we then obtain there exists $t_0$ such that for all $t\in[T_*,T]$ with $T_*\geq t_0$,
\ben
X(t)\leq \frac{1}{8}|t|^{4-2\delta}+\frac{1}{C_1(4-\delta)}|t|^{4-\delta}\leq \frac{1}{4}|t|^{4-2\delta},
\enn
which implies that for all $t\in[T_*,T]$,
\ben
\|D^\half R(t)\|_{L^2}\leq \frac{1}{2}|t|^{2-\delta},\quad\|R(t)\|_{L^2}\leq \frac{1}{2}|t|^{3-\delta},
\enn
and so verifies the estimate of $X(t)$ in \eqref{wn-Tt-boot-2}.

{\it $(ii)$ Bootstrap estimate of $\|D^{\half+\varsigma}R\|_{L^2}$.}
First, we write the equation of $R$
\be\label{equa-F}
i\partial_tR=DR-|R|^2R-\eta-F,
\ee
with $\eta$ defined as in \eqref{etan-Rn} and
\ben
F : =|U+R|^2(U+R)-|U|^2U-|R|^2R.
\enn
Set $Y(t):=\|D^{\half+\varsigma}R\|_{L^2}^2$.
Using equation \eqref{equa-F} and the integration by parts formula
we obtain the equation
\ben
Y^\prime(t)=-2{\rm Im}\<D^{\half+\varsigma}(|R|^2R+\eta+F),D^{\half+\varsigma}R\>.
\enn
Using $U=\sum_{k=1}^{K}U_k$
we can adapt the estimates in \cite[Appdendix E]{K-L-R} from line to line to
the current multi-bubble case and obtain
\ben
|Y^\prime(t)|\leq C|t|^{2-2\delta-4\varsigma}
+C|t|^{-1}Y(t)\|R(t)\|_{H^\half}\ln^\half\(\frac{1+Y^\half(t)}{\|R(t)\|_{H^\half}}\),
\enn
where  the a prior bound  $X(t)\leq |t|^{4-2\delta}$ and Lemma \ref{Lem-P-ve-U} are applied.
Integrating from $t$ to $T$,
using the boundary condition  $Y(T)=0$, the a prior bounds $Y(t)\leq |t|^{2-2\delta-4\varsigma}$ and $\|R(t)\|_{H^\half}\leq |t|^{2-\delta}$,
we get that for $t$ close to 0,
\ben
Y(t)\leq \int_{t}^{0}|Y^\prime(s)|ds\leq C\int_{t}^{0}|s|^{2-2\delta-4\varsigma}+s^{-1}Y(s)\|R(s)\|^\half_{H^\half}ds\leq C|t|^{3-2\delta-4\varsigma}\leq \frac{1}{4}|t|^{2-2\delta-4\varsigma}.
\enn
This yields that
\ben
\|D^{\half+\varsigma}R\|_{L^2}\leq \half|t|^{1-\delta-2\varsigma},
\enn
and thus verifies the estimate of $\|D^{\half+\varsigma}R\|_{L^2}$ in \eqref{wn-Tt-boot-2}.

{\it $(iii)$ Bootstrap estimates of $\lambda_k$ and $b_k$.}
From Lemma \ref{Lem-P-ve-U} it follows that for all $t\in[T_*,T]$,
\begin{align*}
  |\frac{d}{dt}(\lambda^{-\half}_{k}b_k )|
=\lbb_k^{-\frac32}\(\lbb_k\dot{b}_k+\half b_k^2-\half(b_k\dot{\lbb}_k+b_k^2)b_k\)
\leq C|t|^{-3}Mod_k,
\end{align*}
which along with \eqref{Mod-w-lbb} and the fact  $(\lambda^{-\half}_{k}b_k)(T)=\omega$ yields that
\begin{align}  \label{gamlbb-1}
 | (\lambda^{-\half}_{k}b_k)(t)-\omega |
\leq\int_{t}^{T}   |\frac{d}{ds}(\lambda^{-\half}_{k}b_k ) |ds
\leq C\int_{t}^{0}|s|^{-3}Mod_k(s)ds
\leq C|t|^{2-\delta}.
\end{align}
Moreover, the straightforward calculation shows that
\begin{align*}
|\frac{d}{dt}(\lambda_{k}^\half(t) +\frac{\omega}{2} t)|
=\half|\lbb^{-\half}_k(\dot{\lbb}_k+b_k)-(\lbb_k^{-\half} b_k-\omega)|
\leq C|t|^{2-\delta}.
\end{align*}
Taking into account  $\lambda_{k}^\half(T)=-\frac{\omega}{2} T$,
due to \eqref{PjT},
we get
\ben
|\lambda_{k}^\half(t) +\frac{\omega}{2} t|
\leq\int_{t}^{T}  |\frac{d}{ds}(\lambda_{k}^\half(s) +\frac{\omega}{2} s)|dr
\leq C\int_{t}^{0}|s|^{2-\delta}ds\leq C|t|^{3-\delta}.
\enn
This leads to
\be\label{lbb-Tt*}
|\lbb_k(t)-\frac{\omega^2}{4}t^2|\leq |\lambda_{k}^\half(t) +\frac{\omega}{2} t||\lambda_{k}^\half(t) -\frac{\omega}{2} t|\leq C|t|^{4-\delta}\leq \half |t|^{4-2\delta},
\ee
thereby verifying the estimate of $\lbb_k$ in \eqref{lbbn-Tt2}.

Similarly,
by \eqref{Mod-w-lbb} and \eqref{gamlbb-1},
\begin{align*}
   |\frac{d}{dt}(b_k(t)+\frac{\omega^2}{2}t)|
   =  |\lbb_k^{-1}(\lbb_k\dot{b}_k+\frac{1}{2}b_k^2)-\half(\lbb_k^{-1}b_k^2-\omega^2)|
   \leq  C|t|^{2-\delta},
\end{align*}
which along with $b_k(T) = -\frac12\omega^2T$ yields that for $t$ close to 0,
\begin{align*}
   |b_k(t)+\frac{\omega^2}{2}t|
   \leq \int_t^{T}  |\frac{d}{ds} (b_k(s)+\frac{\omega^2}{2}s)| ds
   \leq C\int_{t}^{0}|s|^{2-\delta}ds\leq C|t|^{3-\delta}\leq \half|t|^{3-2\delta}.
\end{align*}
Thus the estimate of $b_k$ in \eqref{lbbn-Tt2} is verified.

{\it $(iv)$ Bootstrap estimates of $v_k$ and $\alpha_k$.}
From \eqref{lbb-Tt*}, Lemma \ref{Lem-P-ve-U} and the estimate of $v_k$ in Lemma \ref{lem-lcomom}
it follows that for $t$ close to 0,
\begin{align}\label{v-Tt*}
|v_k-\frac{\omega^2}{4}t^2|
\leq \lbb_k|\frac{v_k}{\lbb_k}-1|+|\lbb_k-\frac{\omega^2}{4}t^2|
\leq C|t|^{4-\delta}\leq \frac{1}{2}|t|^{4-2\delta},
\end{align}
which, via $\alpha_k(T)=x_k$, gives the estimate of $v_k$ in \eqref{vn-Tt12}.

Moreover, using \eqref{Mod-w-lbb} and \eqref{v-Tt*} and the fact that   $\alpha_k(T)=x_k$
we get
\ben
|\dot{\alpha}_{k}|=
|\dot{\a}_k-v_k+v_k|
\leq Mod_k(t)+|v_k|\leq Mod_k(t)+Ct^2,
\enn
which yields that for $t$ close to 0,
\ben
|\alpha_{k}(t)-x_k|\leq\int_{t}^{T}|\dot{\alpha}_{k}(s)|ds
\leq \half |t|^{3-\delta},
\enn
thereby verifying the estimate of $\a_k$ in \eqref{vn-Tt12}.

{\it $(iv)$ Bootstrap estimate of $\g_k$.}
Using \eqref{lbb-Tt*} and Lemma \ref{Lem-P-ve-U}
we compute
\begin{align*}
   |\frac{d}{dt}(\g_{k}(t) + \frac{4}{\omega^2 t} - \theta_k)|
= |\lbb_k^{-1}(\lambda_{k}\dot{\g}_{k}-1)+\frac{\omega^2 t^2-4\lbb_k}{\omega^2 t^2\lbb_k}|
\leq C|t|^{-\delta},
\end{align*}
which along with $\g_{k}(T) = -\frac{4}{\omega^2 T} + \theta_k$ yields that for $t$  close to 0,
\ben
 |\g_{k}(t) + \frac{4}{\omega^2 t} - \theta_k|
 \leq\int_{t}^{T} | \frac{d}{ds}(\g_{k}(t) + \frac{4}{\omega^2 t} - \theta_k) |ds
 \leq C\int_{t}^{0}|s|^{-\delta}ds
 \leq C|t|^{1-\delta}
\leq \frac{1}{2} |t|^{1-2\delta},
\enn
thereby we verify the estimate of $\g_k$ in  \eqref{thetan-Tt12}.
Therefore, the proof of Theorem \ref{Thm-u-Boot} is complete.
\end{proof}

\section{Existence of multi-bubble solutions} \label{Sec-Exist-Multi}

Let  $\{t_n\}$ be an increasing sequence of times converging to 0.
For every $1\leq k\leq K$, let
\begin{align} \label{PjTn}
   (\lbb_{n,k}, b_{n,k}, v_{n,k}, \a_{n,k}, \g_{n,k})(t_n)
   :=(\frac{\omega^2}{4} t_n^2, -\frac{\omega^2}{2} t_n, \frac{\omega^2}{4} t_n^2, x_k,  -\frac{4}{\omega^2 t_n} + \t_k),
\end{align}
and $Q_{n,k}(t_n,x)$ be the approximate profiles defined  in Lemma \ref{lem-app}
with $b_{n,k}(t_n), v_{n,k}(t_n)$ replacing $b_k, v_k$, respectively.
Consider the approximate solutions $u_n$ solving the following  equation
\be    \label{equa-u-t}
\left\{ \begin{aligned}
 &i\partial_t u_n=D u_n-|u_n|^{2}u_n,   \\
 &u_n(t_n)=\sum_{j=1}^{K}\lbb_{n,k}^{-\frac 12}(t_n) Q_{n,k}\(t_n,\frac{x-\a_{n,k}(t_n)}{\lbb_{n,k}(t_n)}\) e^{i\g_{n,k}(t_n)}.
\end{aligned}\right.
\ee

We have the following uniform estimates as a consequence of the bootstrap estimates in Theorem \ref{Thm-u-Boot}.

\begin{theorem}[Uniform estimates]\label{thm-ue}
Let $\delta, \varsigma\in(0, \frac12)$ and $\delta+2\varsigma<1$.
There exists a uniform backwards time $t_0<0$ such that, for n large enough,
$u_n$ admits the geometrical decomposition $u_n = U_n + R_n$ on $[t_0, t_n]$ as in  Proposition \ref{Prop-dec-un}
with the main blow-up profile given by
\begin{align*}
    U_n(t,x)
    =   \sum_{k=1}^{K} U_{n,k}(t,x), \quad \quad
   U_{n,k}(t,x) = \lbb_{n,k}^{-\frac 12}(t) Q_{n,k}\(t,\frac{x-\a_{n,k}(t)}{\lbb_{n,k}(t)}\) e^{i\g_{n,k}(t)}.
\end{align*}
Moreover, the reminder  and the modulation parameters satisfy that for $t\in[t_0, t_n]$,
\begin{align}
&\|D^\half R_n(t)\|_{L^2}\leq |t|^{2-\delta},\quad\|R_n(t)\|_{L^2}\leq |t|^{3-\delta}, \quad \|D^{\half+\varsigma}R_n(t)\|_{L^2}\leq |t|^{1-\delta-2\varsigma}\label{R-Ttn}\\
&|\la_{n,k}(t) - \frac{\omega^2}{4} t^2 |\leq |t|^{4-2\delta},\quad |b_{n,k}(t)  + \frac{\omega^2}{2} t |\leq |t|^{3-2\delta}, \quad|\al_{n,k}(t)-x_k|\leq |t|^{3-\delta},\label{lba-Ttn}\\
& |v_{n,k}(t)-\frac{\omega^2}{4} t^2|\leq |t|^{4-2\delta},  \quad
|\g_{n,k}(t) + \frac{4}{\omega^2 t} - \t_k| \leq |t|^{1-2\delta}, \quad 1\le k\leq K. \label{thetan-Ttn}
\end{align}.
\end{theorem}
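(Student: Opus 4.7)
The plan is a classical bootstrap/continuity argument built on top of the two machineries developed in the previous sections: the local geometrical decomposition of Proposition \ref{Prop-dec-un} and the bootstrap improvement of Theorem \ref{Thm-u-Boot}. The key observation is that the initial datum of $u_n$ at $t_n$ is tailor-made: $u_n(t_n) = U_n(t_n)$, so $R_n(t_n) = 0$, and the parameters $\mathcal{P}_{n,k}(t_n)$ in \eqref{PjTn} match their target profiles exactly. Consequently, every inequality in \eqref{R-Ttn}--\eqref{thetan-Ttn} holds at $t = t_n$ with strict (in fact zero) slack.

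First, I would let $t_0 < 0$ be the universal backward time supplied by Theorem \ref{Thm-u-Boot}, and define
\ben
T_n^{\ast} := \inf\big\{ t \in [t_0, t_n] \,:\, u_n \text{ admits the geometrical decomposition on } [t, t_n] \text{ and the bootstrap bounds hold there} \big\}.
\enn
By Proposition \ref{Prop-dec-un} applied backwards from $t_n$ together with the strict slack at $t_n$, this set is nonempty, so $T_n^{\ast} < t_n$ is well-defined. The goal is to prove $T_n^{\ast} = t_0$ uniformly in $n$. Suppose for contradiction that $T_n^{\ast} > t_0$. By continuity of the flow and of the modulation parameters at $T_n^{\ast}$, at least one inequality in \eqref{R-Ttn}--\eqref{thetan-Ttn} must be saturated at $t = T_n^{\ast}$. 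On the other hand, the hypotheses of Theorem \ref{Thm-u-Boot} are exactly satisfied on $[T_n^{\ast}, t_n]$, since the assumed bounds there coincide with \eqref{R-Tt}--\eqref{thetan-Tt}; its conclusion \eqref{wn-Tt-boot-2}--\eqref{thetan-Tt12} then strictly improves each such bound by a factor $\tfrac{1}{2}$, contradicting the claimed saturation.

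The main obstacle in the continuity step is to extend the geometrical decomposition itself past $T_n^{\ast}$, since Proposition \ref{Prop-dec-un} is only local in time. For this I would re-apply the fixed-point argument underlying Proposition \ref{Prop-dec-un} at time $T_n^{\ast}$, whose hypotheses reduce to the uniform smallness
\ben
\lambda_{n,k}(T_n^{\ast}) + b_{n,k}(T_n^{\ast}) + v_{n,k}(T_n^{\ast}) + \sum_{k=1}^K |\alpha_{n,k}(T_n^{\ast}) - x_k| + \|R_n(T_n^{\ast})\|_{H^{1/2}} \ll 1,
\enn
which follows directly from the improved bounds. The final point is that all constants entering Theorem \ref{Thm-u-Boot}, Lemma \ref{Lem-P-ve-U} and the coercivity/monotonicity estimates of Section \ref{Sec-Gen-Energy} depend only on universal quantities ($\|Q\|_{L^2}$, the gap $\sigma$ in \eqref{def-sig}, the cut-off parameter $A$, and the fixed numbers $\delta$, $\varsigma$, $K$) and are independent of $n$ and of $t_n$; hence the backward time $t_0$ is genuinely $n$-independent. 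Taking $n$ large enough that $t_n > t_0$ then yields \eqref{R-Ttn}--\eqref{thetan-Ttn} on the entire interval $[t_0, t_n]$, which completes the proof.
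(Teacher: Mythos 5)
Your proposal is correct and follows essentially the same route the paper takes: the paper's proof of Theorem \ref{thm-ue} simply invokes "a standard continuity argument" built on Theorem \ref{Thm-u-Boot} and the boundary data \eqref{PjTn}, referring to \cite{S-Z} for the details, and your argument is precisely a careful write-up of that argument. The key points you identify — zero slack at $t=t_n$ because $R_n(t_n)=0$ and $\mathcal{P}_n(t_n)$ is exact, the infimum/saturation-time formulation, re-applying the fixed-point construction of Proposition \ref{Prop-dec-un} to extend the decomposition past $T_n^\ast$ using the improved smallness, and the $n$-independence of all constants (hence of $t_0$) — are exactly the ingredients that make the cited "standard continuity argument" go through.
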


\begin{proof}
In view of Theorem \ref{Thm-u-Boot} and the boundary condition \eqref{PjTn},
estimates \eqref{R-Ttn}-\eqref{thetan-Ttn} follow from a standard continuity argument.
The relevant arguments can be found, for instance, in \cite{S-Z}.
For the sake of simplicity, the details are omitted here.
\end{proof}

Now, we are in a position to prove the main result in Theorem \ref{thm-main},
that is, to construct multi-bubble blow-up solutions to the half-wave equation.

{\bf Proof of Theorem \ref{thm-main}.}
By virtue of Theorem \ref{thm-ue},
$\{u_n(t_0)\}$ are uniformly bounded in $H^{\half+\varsigma}(\R)$,
and thus there exists   $u_0 \in H^{\half+\varsigma}(\R)$ such that for any $s\in[0,\half+\varsigma]$,
 \begin{align} \label{unt0-u0-Hs}
    u_n(t_0) \rightharpoonup u_0,\quad weakly\; in\; H^s(\R),\quad as\ n \to \9.
\end{align}

We claim that the convergence holds strongly in the $L^2$ space, i.e.,
\begin{align} \label{unt0-u0-L2}
    u_n(t_0) \to u_0,\quad strongly\;  in\; L^2(\R),\quad as\ n \to \9.
\end{align}
To this end,
let $\varphi: \R\rightarrow\R$ be a smooth nonnegative cut-off function
such that $\varphi(x)=0$ for $|x|\leq 1$ and $\varphi(x)\equiv1$ for $|x|\geq 2$.
Set $\varphi_A(x):=\varphi(\frac{x}{A})$ for $A>0$.
On one hand, the boundary condition $
u_n(t_n)=\sum_{k=1}^{K}U_{n,k}(t_n)$ and Lemma \ref{lem-dep} imply that
for $A\geq 2\max_{1\leq k\leq K}|x_k|$,
\begin{align}\label{est-sec6}
|\int |u_n(t_n)|^2\varphi_A dx|\leq \frac{C}{A^3}t_n^6.
\end{align}
On the other hand, it follows from equation \eqref{equa-u-t} and
the integration by parts formula that
\ben
\frac{d}{dt}\int|u_n|^2\varphi_Adx=2{\rm Im}\int \ol{u}_n D u_n\varphi_A dx
={\rm Im}\int u_n(D(\ol{u}_n\varphi_A)-D \ol{u}_n\varphi_A)dx.
\enn
By the Calder\'on estimate in Lemma \ref{lem-l2-pro2},
the conservation law of mass and the fact that $\|\nabla \varphi_A\|_{L^\9}=\frac{1}{A}\|\nabla \varphi\|_{L^\9}$,
\ben
|\frac{d}{dt}\int|u_n|^2\varphi_Adx|\leq C\|\nabla \varphi_A\|_{L^\9}\|u_n\|_{L^2}^2
\leq \frac{C}{A}\|\nabla \varphi\|_{L^\9}\|u_n\|_{L^2}^2\leq \frac{C}{A}.
\enn
Hence, integrating from $t_0$ to $t_n$ and using \eqref{est-sec6}
we infer that there exists $C>0$ independent of $n$ such that
\ben
|\int |u_n(t_0)|^2\varphi_A dx|\leq \frac{C}{A},
\enn
which yields the uniform integrability
\be\label{est-main-1}
 \sup\limits_{n\geq 1} \|u_n(t_0)\|_{L^2(|x|\geq A)}\rightarrow0,\quad as\ A\rightarrow\9.
\ee
Thus, the weak convergence in $L^2$ along with  \eqref{est-main-1} implies \eqref{unt0-u0-L2}, as claimed.

Combining \eqref{unt0-u0-Hs} and \eqref{unt0-u0-L2} together
and using interpolations
we also have that for any $s\in[0,\half+\varsigma)$,
 \begin{align} \label{unt0-u0t0-Hs}
    u_n(t_0) \rightarrow u_0,\quad strongly\; in\; H^s(\R),\quad as\ n \to \9.
\end{align}

By virtue of \eqref{unt0-u0t0-Hs}
and
the  local well-poseness theory,
we then obtain a unique $H^\half$-solution $u$ to \eqref{equa-u-t} on $[t_0, 0)$
satisfying that $u(t_0)=u_0$,
and
\begin{align}\label{un-u-0-L2}
\lim_{n\rightarrow \infty}\|u_n-u\|_{C([t_0,t];H^\half(\R))}=0,\ \ t\in [t_0, 0).
\end{align}

Moreover, for each $t_0\leq t<0$ fixed,
the modulation estimate \eqref{Mod-bdd} and Theorem \ref{thm-ue}
yield that the derivatives of  parameters $\dot \calp_n$ are uniformly bounded
on $[t_0,t]$,
and thus $\calp_n$ are equi-continuous on $[t_0,t]$, $n\geq 1$.
Then, by  the Arzel\`{a}-Ascoli Theorem,
$\calp_n$ converges uniformly on $[t_0,t]$ up to some subsequence
(which may depend on $t$).
But, using the diagonal arguments
one may extract a universal subsequence (still denoted by $\{n\}$)
such that
for some $\calp:= (\calp_1, \cdots, \calp_K)$,
where
$\mathcal{P}_k:=(\lbb_k, b_k, v_k, \a_k, \g_k) \in C([t_0,t]; \R^{5})$,
$1\leq k\leq K$,
and for every $t\in [t_0,0)$,
one has
\begin{align} \label{Pn-P}
   \mathcal{P}_n \rightarrow\mathcal{P}\ \ in\ C([t_0,t]; \R^{5K}).
\end{align}
Then, taking into account the uniform estimates \eqref{lba-Ttn}-\eqref{thetan-Ttn}
we obtain that
for all $t\in[t_0,0)$,
\be\ba\label{est-par-sol}
&|\la_{k}(t) - \frac{\omega^2}{4} t^2 |\leq |t|^{4-2\delta},\quad |b_{k}(t)  + \frac{\omega^2}{2} t |\leq |t|^{3-2\delta}, \quad|\al_{k}(t)-x_k|\leq |t|^{3-\delta},\\
& |v_{k}(t)-\frac{\omega^2}{4} t^2|\leq |t|^{4-2\delta},  \quad
|\g_{k}(t) + \frac{4}{\omega^2 t} - \t_k| \leq |t|^{1-2\delta}, \quad 1\leq k\leq K.
\ea\ee

Let
\begin{align*}
    U(t,x)
    :=   \sum_{k=1}^{K} U_{k}(t,x), \quad with\quad
   U_{k}(t,x) := \lbb_{k}^{-\frac 12}(t) Q_{k}\(t,\frac{x-\a_{k}(t)}{\lbb_{k}(t)}\) e^{i\g_{k}(t)}.
\end{align*}
It follows from the inequality
\ben
\|u(t)-U(t)\|_{H^\half}\leq \|u(t)-u_n(t)\|_{H^\half}+\|U(t)-U_n(t)\|_{H^\half}+\|R_n(t)\|_{H^\half},
\enn
the uniform estimates \eqref{R-Ttn} and
the convergence in \eqref{un-u-0-L2} and \eqref{Pn-P} that
\ben
\lim_{t\rightarrow0^{-}}\|u(t)-U(t)\|_{H^\half}=0.
\enn
Taking into account the approximation of  $Q_k$ to $Q$ in \eqref{Q-asy},
the estimates of parameters in \eqref{est-par-sol} and Lemma \ref{lem-dep}
we conclude that $u(t)$ blows up at $t=0$
and satisfies
\ben\ba
&u(t)-\sum_{k=1}^{K}\lbb_{k}^{-\frac 12}(t) Q\(t,\frac{x-\a_{k}(t)}{\lbb_{k}(t)}\)e^{i\g_k(t)}\rightarrow 0\quad in \quad L^2(\R),\quad as \quad t\rightarrow 0^-.
\ea\enn
In particular, one has
\ben\ba
&\|u(t)\|_{L^2}^2=\lim_{t\rightarrow 0^-}\|U(t)\|_{L^2}^2=K\|Q\|_{L^2}^2\quad and\quad u^2(t)\rightharpoonup\sum_{k=1}^{K}\|Q\|^2_{L^2}\delta_{x=x_k},\quad as \quad t\rightarrow 0^-.
\ea\enn

Therefore,
the proof of Theorem \ref{thm-main} is complete.

\appendix

\section{Fractional Laplacian operators}

In this appendix we will recall some fundamental properties of fractional Laplacian operators.
Let us start with the following lemma
which provides the pointwise characterization of the fractional Laplacian operator defined as in \eqref{def-ds}.

\begin{lemma}[\cite{D-P-V}]\label{lem-fra}
Let $s\in(0,1)$.
Then, for any $f\in \mathscr{S}(\R)$,
\be\ba
(-\Delta)^sf(x)&=C(s) P.V.\int_{\R}\frac{f(x+y)-f(x)}{|y|^{1+2s}}dy\\
&=-\frac{1}{2}C(s)\int_{\R}\frac{f(x+y)+f(x-y)-2f(x)}{|y|^{1+2s}}dy,\label{id-fra}
\ea\ee
where the normalization constant is given by
\be\label{constant}
C(s)=\left(\int_{\R}\frac{1-cosx}{|x|^{1+2s}}dx\right)^{-1}.
\ee
\end{lemma}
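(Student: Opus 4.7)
The plan is to derive both identities from the Fourier characterization of $(-\Delta)^s$, which acts as the multiplier $|\xi|^{2s}$. The argument proceeds in three steps: symmetrize to remove the principal value, Fourier transform via Fubini, and compute the multiplier by a homogeneity rescaling.

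First, I would verify that the two integral expressions in \eqref{id-fra} agree. Replacing $y$ by $-y$ in the first integrand and averaging with the original yields
\[
\mathrm{P.V.}\int_{\R}\frac{f(x+y)-f(x)}{|y|^{1+2s}}\,dy=-\frac{1}{2}\int_{\R}\frac{f(x+y)+f(x-y)-2f(x)}{|y|^{1+2s}}\,dy.
\]
For $f\in\mathscr{S}(\R)$, Taylor's theorem gives $|f(x+y)+f(x-y)-2f(x)|\leq\|f''\|_{L^{\infty}}|y|^{2}$ near the origin, so the singular weight reduces to $|y|^{1-2s}$ with $s<1$, which is locally integrable; at infinity the integrand is bounded by $4\|f\|_{L^{\infty}}|y|^{-1-2s}$. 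Thus the symmetric integral is absolutely convergent, the principal value can be dropped on that side, and Fubini is legitimized for the next step.

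Second, denoting the symmetric operator by $T_{s}f(x)$, an application of Fubini identifies its Fourier multiplier:
\[
\widehat{T_{s}f}(\xi)=\left(\int_{\R}\frac{1-\cos(y\xi)}{|y|^{1+2s}}\,dy\right)\hat{f}(\xi).
\]
The rescaling $z=|\xi|y$ then exhibits this multiplier as $|\xi|^{2s}/C(s)$, with $C(s)$ as in \eqref{constant}; the reference integral $\int_{\R}(1-\cos z)|z|^{-1-2s}\,dz$ converges at $0$ because $1-\cos z=O(z^{2})$ and at infinity because $1+2s>1$. Comparing with $\widehat{(-\Delta)^{s}f}(\xi)=|\xi|^{2s}\hat{f}(\xi)$ and applying Fourier inversion yields $(-\Delta)^{s}f=C(s)\,T_{s}f$, which is the claim.

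The only genuinely delicate point I expect is the clean justification of Fubini together with the equivalence of the principal value and the symmetric integral; both follow at once from the Taylor bound recorded above, so no further obstacle arises. The argument is essentially the one-dimensional specialization of the presentation in \cite{D-P-V}, and I would cite that reference rather than reproduce the details in the body of the paper.
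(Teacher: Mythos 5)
Your proof is correct and is exactly the standard argument from the cited reference \cite{D-P-V} (symmetrize to an absolutely convergent second-difference integral, compute the Fourier multiplier by Fubini, and rescale to exhibit $|\xi|^{2s}/C(s)$); the paper itself only states this lemma with a citation rather than reproducing a proof, so there is nothing further to compare against.
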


The following lemma provides useful formulas for the fractional
Laplacian of the product of two functions.
\begin{lemma}[\cite{BWZ}]\label{lem-formula}
Let $s\in(0,1)$ and $f, g\in \mathscr{S}(\R)$.
Then, we have
\be\label{id-fra2}
(-\Delta)^{s}(fg)-(-\Delta)^sfg
=C(s)\int\frac{f(x+y)(g(x+y)-g(x))-f(x-y)(g(x)-g(x-y))}{|y|^{1+2s}}dy,
\ee
and
\ben
(-\Delta)^{s}(fg)-(-\Delta)^sfg-f(-\Delta)^sg
=-C(s)\int\frac{(f(x+y)-f(x))(g(x+y)-g(x))}{|y|^{1+2s}}dy,
\enn
where $C(s)$ is defined as in \eqref{constant}.
\end{lemma}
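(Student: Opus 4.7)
The plan is to reduce both identities to the symmetric integral representation of $(-\Delta)^s$ from Lemma \ref{lem-fra} and carry out algebraic manipulations of the increment $(fg)(x\pm y) - (fg)(x)$. I would use the symmetric form rather than the principal-value form throughout, because the Schwartz-class numerators vanish quadratically at $y=0$, making every integral absolutely convergent and eliminating PV subtleties.

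For \eqref{id-fra2}, I would start from
\begin{equation*}
(-\Delta)^s(fg)(x) = -\tfrac{1}{2}C(s)\int_{\R}\frac{(fg)(x+y)+(fg)(x-y)-2(fg)(x)}{|y|^{1+2s}}\,dy,
\end{equation*}
and perform the algebraic identity
\begin{align*}
&(fg)(x+y)+(fg)(x-y)-2(fg)(x) \\
&\qquad = f(x+y)\bigl(g(x+y)-g(x)\bigr)+f(x-y)\bigl(g(x-y)-g(x)\bigr) \\
&\qquad \quad +g(x)\bigl(f(x+y)+f(x-y)-2f(x)\bigr).
\end{align*}
The last line, integrated against $|y|^{-1-2s}$, recovers $g(x)(-\Delta)^s f(x)$ by a second application of \eqref{id-fra}. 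In the remaining two pieces I would substitute $y\mapsto -y$ in the $f(x-y)$ term (exploiting evenness of the kernel $|y|^{-1-2s}$) and rewrite $g(x-y)-g(x) = -(g(x)-g(x-y))$ to produce exactly the right-hand side of \eqref{id-fra2}, up to constant bookkeeping.

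For the second identity, I would apply the same strategy one level deeper. Starting from \eqref{id-fra2}, decompose
\begin{equation*}
f(x+y)\bigl(g(x+y)-g(x)\bigr) = f(x)\bigl(g(x+y)-g(x)\bigr) + \bigl(f(x+y)-f(x)\bigr)\bigl(g(x+y)-g(x)\bigr),
\end{equation*}
and analogously for the $f(x-y)$ companion. The terms linear in $g(x\pm y)-g(x)$ carry a common factor $f(x)$ and, after $y\mapsto -y$ symmetrization, combine to form $f(x)(-\Delta)^s g(x)$ via \eqref{id-fra}. The symmetric remainder, built from products $(f(x+y)-f(x))(g(x+y)-g(x))$ together with its $y\mapsto -y$ image, is precisely the kernel appearing in the second formula; transferring $f(-\Delta)^s g$ to the left-hand side yields the stated identity.

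The only real obstacle is notational: one must consistently favor the symmetric representation, track the sign induced by each $y\mapsto -y$ substitution, and verify that the leftover kernels agree with those displayed in the statement. Because $f,g\in\mathscr{S}(\R)$ provides uniform pointwise and decay bounds, term-by-term manipulation is unconditionally justified, so no Fubini or dominated-convergence subtlety enters beyond the standard Schwartz calculus.
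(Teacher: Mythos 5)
The paper does not prove this lemma itself --- it is quoted verbatim from the reference \cite{BWZ} --- so there is no internal argument to compare against. Your elementary derivation from the symmetric second-difference representation in Lemma~\ref{lem-fra} is the standard and correct route: the Leibniz-type split
\[
(fg)(x+y)+(fg)(x-y)-2(fg)(x)
= f(x+y)\bigl(g(x+y)-g(x)\bigr)+f(x-y)\bigl(g(x-y)-g(x)\bigr)
+g(x)\bigl(f(x+y)+f(x-y)-2f(x)\bigr)
\]
isolates $g(x)(-\Delta)^s f(x)$, and peeling one more factor of $f(x)$ off the remainder produces $f(x)(-\Delta)^s g(x)$ plus the bilinear kernel of the second formula. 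One small clarification on the $y\mapsto -y$ moves you invoke repeatedly: they are not actually needed for the first identity, nor for extracting $f(x)(-\Delta)^s g$, since after the algebraic split every piece is already the symmetric second difference; the only step that genuinely uses evenness of the kernel is the observation that $\int |y|^{-1-2s}\,(f(x-y)-f(x))(g(x-y)-g(x))\,dy = \int |y|^{-1-2s}\,(f(x+y)-f(x))(g(x+y)-g(x))\,dy$, which merges the two cross terms for the second identity.

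Since you explicitly wave at ``constant bookkeeping,'' let me flag what the bookkeeping actually gives. Carrying your computation through literally from the line $(-\Delta)^s f = -\tfrac{1}{2}C(s)\int |y|^{-1-2s}(f(x+y)+f(x-y)-2f(x))\,dy$ yields prefactor $-\tfrac{1}{2}C(s)$ on the right-hand side of \eqref{id-fra2}, not $C(s)$; the second identity with $-C(s)$ then comes out consistent (after doubling by the $y\mapsto -y$ symmetry noted above, $-\tfrac{1}{2}C(s)\cdot 2 = -C(s)$). Note also that the principal-value line of Lemma~\ref{lem-fra} as typeset disagrees with its symmetrized line by an overall sign, so the constants in this part of the appendix are best checked directly against \cite{D-P-V} and \cite{BWZ}. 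None of this impugns your method --- the decompositions are exactly right and are all that the paper actually uses downstream, where these identities enter only through qualitative pointwise bounds.
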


Lemma \ref{Lem-GN} below contains the standard Sobolev embeddings and interpolations
that will be used to control the high order terms in the expansion of nonlinearity.

\begin{lemma} \label{Lem-GN}
(i) For $0<s<\half$,
\ben
\dot{H}^{s}(\R)\hookrightarrow L^{\frac{2}{1-2s}}(\R).
\enn
(ii) For $s=\half$,
\ben
H^{\half}(\R)\hookrightarrow L^p(\R),\quad \forall p\in[2,+\infty).
\enn
(iii) For $s>\half$,
\ben
H^{s}(\R)\hookrightarrow L^p(\R),\quad \forall p\in[2,+\infty].
\enn
(iv) For $s_1\leq s\leq s_2$, the following interpolation estimate holds
\ben
\|u\|_{\dot{H}^s(\R)}\leq \|u\|^{1-\t}_{\dot{H}^{s_1}(\R)}\|u\|^\t_{\dot{H}^{s_2}(\R)}\quad with\quad s=(1-\t)s_1+\t s_2.
\enn
(v) For $s>\half$ and $f\in H^s(\R)$,  there exists $C>0$ depending on $s$ such that
\be\label{est-hs}
\|f\|_{L^\9}\leq C\|f\|_{H^\half}\(\ln\(2+\frac{\|f\|_{H^s}}{\|f\|_{H^\half}}\)\)^{\half}.
\ee
\end{lemma}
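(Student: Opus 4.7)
The plan is to dispatch (i)--(iv) by classical Fourier-analytic means and to concentrate the effort on the logarithmic refinement (v), which is the only estimate with content specific to this paper (it is invoked in the nonlinear commutator bound \eqref{est-l-8}). For (i), the key observation is that $f=I_s(D^s f)$ where $I_s$ is the Riesz potential of order $s$, so the Hardy--Littlewood--Sobolev inequality with exponents $(2,\,2/(1-2s))$ gives the embedding directly. For (iii), apply Fourier inversion and Cauchy--Schwarz to obtain
\[
\|f\|_{L^{\9}}\leq \|\wh f\|_{L^1}\leq \|\<\xi\>^{-s}\|_{L^2(\R)}\,\|f\|_{H^s},
\]
which is finite exactly when $s>\half$, and then interpolate with $\|f\|_{L^2}=\|\wh f\|_{L^2}$ for the intermediate range. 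Part (ii) is the classical sub-critical Gagliardo--Nirenberg inequality $\|f\|_{L^p}\leq C\|f\|_{L^2}^{2/p}\|D^{\half}f\|_{L^2}^{1-2/p}$ (valid for all finite $p\geq 2$, with the constant blowing up as $p\to\9$, consistent with the endpoint failure $H^{\half}\not\hookrightarrow L^{\9}$). Part (iv) follows from Plancherel combined with H\"older on the Fourier side, using the factorization $|\xi|^s=|\xi|^{(1-\t)s_1}|\xi|^{\t s_2}$.

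The main content of the lemma is the logarithmic estimate \eqref{est-hs}; the plan is a Fourier-side frequency split at a threshold $N\geq 2$ to be optimized. Starting from $\|f\|_{L^{\9}}\leq \|\wh f\|_{L^1}$, write
\[
\int_{\R}|\wh f(\xi)|\,d\xi\leq \int_{|\xi|\leq N}|\wh f(\xi)|\,d\xi+\int_{|\xi|\geq N}|\wh f(\xi)|\,d\xi=:I_{\mathrm{lo}}+I_{\mathrm{hi}}.
\]
A Cauchy--Schwarz pairing of $|\wh f|$ against the weight $\<\xi\>^{-1}$ in $I_{\mathrm{lo}}$ and against $\<\xi\>^{-2s}$ in $I_{\mathrm{hi}}$, the latter using $s>\half$, yields
\[
I_{\mathrm{lo}}\leq C\,(\log(2+N))^{\half}\|f\|_{H^{\half}},\qquad I_{\mathrm{hi}}\leq C\,N^{\half-s}\|f\|_{H^s}.
\]
Selecting $N$ so that $N^{s-\half}=\|f\|_{H^s}/\|f\|_{H^{\half}}$ balances $I_{\mathrm{hi}}$ against $\|f\|_{H^{\half}}$ and converts $\log(2+N)$ into $\log(2+\|f\|_{H^s}/\|f\|_{H^{\half}})$, producing \eqref{est-hs}.

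The only potential obstacle is the degenerate regime where $\|f\|_{H^s}/\|f\|_{H^{\half}}$ is close to $1$, so that the optimized $N$ is $\mathcal{O}(1)$ and the target logarithm is itself $\mathcal{O}(1)$; there the bound reduces to part (iii) applied directly at the given $s>\half$, with the ambient constant absorbed into $C=C(s)$. The whole argument is quantitative and purely Fourier-theoretic, essentially the one-dimensional Brezis--Gallou\"et inequality, so no further technical machinery is required.
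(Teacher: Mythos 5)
Your proposal is correct; parts (i)--(iv) are the standard embeddings and interpolations obtained on the Fourier side exactly as you sketch, and part (v) is the classical Brezis--Gallou\"et frequency-splitting argument, carried through without gaps (the degenerate regime where the balance parameter $N$ is $O(1)$ is correctly absorbed into the constant). The paper itself gives no proof for this lemma---it cites \cite{BCD} for (i)--(iv) and \cite[Appendix D]{K-L-R} for (v)---and your frequency-split derivation of (v) is the same approach used in that cited reference.
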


\begin{proof}
The Sobolev embeddings and interpolation estimates $(i)-(iv)$ can be found, e.g., in \cite{BCD}.
See \cite[Appendix D]{K-L-R} for the proof of $(v)$.
\end{proof}

We also use the following fractional chain rule.
\begin{lemma}(\cite{CM91})  \label{Lem-Pro-rule}
Let $0<s<1$ and $1< p, p_1, p_2<\infty$ with $\frac{1}{p}=\frac{1}{p_1}+\frac{1}{p_2}$.
Then, for any $f\in C^{1}(\C, \C)$, the exists $C>0$ such that
for any $u\in \mathscr{S}(\R)$,
\ben
\|D^sf(u)\|_{L^p}\leq C\|f^{\prime}(u)\|_{L^{p_1}}\|D^su\|_{L^{p_2}}.
\enn
\end{lemma}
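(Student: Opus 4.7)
The plan is to follow the classical Littlewood-Paley/paraproduct argument of Christ and Weinstein. Fix a homogeneous Littlewood-Paley partition $\{\Delta_j\}_{j\in\Z}$ with Fourier multipliers supported in $|\xi|\sim 2^j$, let $S_j=\sum_{k<j}\Delta_k$ denote the low-frequency cutoff, and recall the equivalence $\|D^s g\|_{L^p}\sim \|(\sum_j 2^{2js}|\Delta_j g|^2)^{1/2}\|_{L^p}$ coming from the Triebel-Lizorkin characterization of $\dot H^{s,p}$. The strategy is to decompose $\Delta_j f(u)$ using a telescoping identity and then to bound each piece pointwise by a product of Hardy-Littlewood maximal functions, so that the final bound follows from the Fefferman-Stein vector-valued maximal inequality together with H\"older's inequality in the splitting $\frac{1}{p}=\frac{1}{p_1}+\frac{1}{p_2}$.

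The key steps are as follows. First, choose $N\in\N$ and write the telescoping decomposition
\ben
f(u)=f(S_{j-N}u)+\int_0^1 f'\!\left(S_{j-N}u+\tau(u-S_{j-N}u)\right)(u-S_{j-N}u)\,d\tau.
\enn
For $N$ sufficiently large (depending only on the support of the Littlewood-Paley cutoffs), a spectral support argument shows that $\Delta_j f(S_{j-N}u)$ is controlled by a lower-order term that can be absorbed. For the integral remainder, use the pointwise domination
\ben
\bigl|f'(S_{j-N}u(x)+\tau(u(x)-S_{j-N}u(x)))\bigr|\lesssim M\!\left(f'(u)\right)(x),
\enn
valid for a.e.\ $x$ by the continuity of $f'$ combined with the Lebesgue differentiation theorem, since $S_{j-N}u$ and convex combinations involving $u$ are all averages of $u$ at scales $\gtrsim 2^{-j}$. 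Second, estimate the increment $u-S_{j-N}u=\sum_{k\ge j-N}\Delta_k u$ via Bernstein's inequality and recognize $2^{js}|\Delta_k u|$ as essentially a dyadic building block of $D^s u$. Third, take the $\ell^2_j$ square function and apply H\"older plus the $L^{p_1}$-boundedness of $M$ for $p_1>1$ and the Fefferman-Stein inequality in $L^{p_2}$ for $p_2>1$; this yields
\ben
\|D^s f(u)\|_{L^p}\lesssim \|M(f'(u))\|_{L^{p_1}}\,\|D^s u\|_{L^{p_2}}\lesssim \|f'(u)\|_{L^{p_1}}\|D^s u\|_{L^{p_2}}.
\enn

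The main obstacle will be justifying the pointwise domination of $f'$ evaluated at the \emph{intermediate} convex combination by the maximal function $M(f'(u))$, without extra structural hypotheses on $f$ beyond $f\in C^1$. A direct approach based solely on Lemma \ref{lem-fra} and the one-dimensional fundamental theorem of calculus runs into precisely this difficulty: one must argue that the intermediate value coincides, up to a small error on dyadic scales, with $u(\xi)$ for some nearby $\xi$, so that $|f'|$ at that value is controlled by $M(f'(u))$. The Littlewood-Paley framework circumvents this by performing the telescoping at frequency scales, where $S_{j-N}u$ is literally a spatial average of $u$ and hence the composition $f'\circ S_{j-N}u$ is pointwise bounded by $M(f'(u))$ by Jensen's inequality. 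Since this lemma is a standard result established in \cite{CM91}, the full proof of each of these technical ingredients will be cited rather than reproduced.
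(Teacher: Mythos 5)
The paper does not prove this lemma; it only cites Christ--Weinstein \cite{CM91}, and you ultimately do the same, which is the appropriate resolution. Your sketch of the underlying Littlewood--Paley argument does identify the right ingredients (dyadic decomposition, maximal-function control, Fefferman--Stein, H\"older in the exponent split), but two of the bridging claims are inaccurate as stated. First, $\Delta_j f(S_{j-N}u)$ is not a ``lower-order term that can be absorbed'': composition with a nonlinear $f$ is not Fourier-support preserving, so $f(S_{j-N}u)$ carries content at frequency $\sim 2^j$ even though $S_{j-N}u$ is localized below $2^{j-N}$, and no choice of $N$ makes this piece negligible. The Christ--Weinstein argument instead uses the \emph{full} telescoping $f(u)=\sum_{k\in\mathbb{Z}}\bigl(f(S_{k+1}u)-f(S_k u)\bigr)=\sum_k m_k\,\Delta_k u$, with $m_k=\int_0^1 f'\bigl((1-\tau)S_k u+\tau S_{k+1}u\bigr)\,d\tau$, and then estimates the resulting paraproduct pieces rather than discarding a low-frequency remainder. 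Second, the appeal to ``Jensen's inequality'' to obtain $|f'(S_k u(x))|\lesssim M(f'(u))(x)$ does not hold: the kernel of $S_k$ is a Schwartz function integrating to $1$ but it is \emph{not} nonnegative, so $S_k u(x)$ is not a genuine average of $u(y)$; and $|f'|$ is not assumed convex, so Jensen would not apply even if it were. Controlling the symbols $m_k$ in $L^{p_1}$ by $\|f'(u)\|_{L^{p_1}}$ is the genuinely delicate step of \cite{CM91} and cannot be dispatched this quickly. Since you explicitly defer to the reference for the technical details, the lemma is validly invoked, but as written the sketch would not stand on its own.
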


We recall the following commutator estimates.
\begin{lemma}(\cite[Appendiex E]{K-L-R})\label{lem-l2-pro}
For $0\leq s\leq 1$ and $f, g\in \mathscr{S}(\R)$,
we have
\ben
\|D^s(fg)-fD^sg\|_{L^2}\leq C\min\{\|D^sf\|_{L^2}\|\hat{g}\|_{L^1},\|\widehat{D^sf}\|_{L^1}\|g\|_{L^2}\}.
\enn
\end{lemma}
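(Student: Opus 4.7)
\bigskip

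\noindent\textbf{Proof proposal for Lemma \ref{lem-l2-pro}.}

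The plan is to pass to the Fourier side and exploit the sub-additivity of $|\cdot|^s$ for the exponent range $s\in[0,1]$. First I would compute the Fourier transform of the commutator. Using $\widehat{fg}=\hat f*\hat g$ together with the definition \eqref{def-ds} of $D^s$, one gets
\begin{equation*}
   \mathcal{F}\bigl(D^s(fg)-fD^sg\bigr)(\xi)
   =\int_{\R}\bigl(|\xi|^s-|\eta|^s\bigr)\,\hat f(\xi-\eta)\,\hat g(\eta)\,d\eta.
\end{equation*}
Because $0\leq s\leq 1$, the function $t\mapsto t^s$ is sub-additive on $[0,\infty)$, hence $\bigl||\xi|^s-|\eta|^s\bigr|\leq |\xi-\eta|^s$. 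Substituting this pointwise bound and recognizing $|\xi-\eta|^s|\hat f(\xi-\eta)|=|\widehat{D^sf}(\xi-\eta)|$, one obtains the pointwise convolution majorization
\begin{equation*}
   \bigl|\mathcal{F}\bigl(D^s(fg)-fD^sg\bigr)(\xi)\bigr|
   \leq \bigl(|\widehat{D^sf}|*|\hat g|\bigr)(\xi).
\end{equation*}

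The second step is to apply Plancherel followed by Young's convolution inequality. On the one hand, $\|h_1*h_2\|_{L^2}\leq \|h_1\|_{L^2}\|h_2\|_{L^1}$ yields
\begin{equation*}
   \|D^s(fg)-fD^sg\|_{L^2}
   =\bigl\|\mathcal{F}\bigl(D^s(fg)-fD^sg\bigr)\bigr\|_{L^2}
   \leq \|\widehat{D^sf}\|_{L^2}\|\hat g\|_{L^1}
   = \|D^sf\|_{L^2}\|\hat g\|_{L^1},
\end{equation*}
where the last equality uses Plancherel's identity. On the other hand, applying instead the dual form $\|h_1*h_2\|_{L^2}\leq \|h_1\|_{L^1}\|h_2\|_{L^2}$ and Plancherel to $g$ gives
\begin{equation*}
   \|D^s(fg)-fD^sg\|_{L^2}
   \leq \|\widehat{D^sf}\|_{L^1}\|\hat g\|_{L^2}
   = \|\widehat{D^sf}\|_{L^1}\|g\|_{L^2}.
\end{equation*}
Taking the minimum of the two bounds produces the stated inequality.

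There is essentially no obstacle here: the argument is a routine Fourier side computation, and the only conceptual input is the sub-additivity $|\xi|^s\leq |\xi-\eta|^s+|\eta|^s$ valid precisely because $s\in[0,1]$. One might wish to first argue for Schwartz class (so that $\hat f,\hat g$ and the convolution integrals are all well-defined in the classical sense) and then obtain the estimate as stated, which is exactly the hypothesis of the lemma; no density extension is needed for the statement as written.
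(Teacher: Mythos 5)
Your proof is correct and is the standard argument one finds for this commutator estimate: pass to the Fourier side, invoke the sub-additivity $\bigl||\xi|^s-|\eta|^s\bigr|\le|\xi-\eta|^s$ valid for $s\in[0,1]$, and close with Plancherel plus the two endpoint cases of Young's convolution inequality. This is essentially the same argument given in the cited reference \cite[Appendix E]{K-L-R}, which the paper relies on without reproving.
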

\begin{lemma}(\cite[Theorem 2]{C65})\label{lem-l2-pro2}
For $\nabla f\in L^\9(\R)$ and $g\in \mathscr{S}(\R)$,
we have
\ben
\|D(fg)-fDg\|_{L^2}\leq C\|\nabla f\|_{L^\9}\|g\|_{L^2}.
\enn
\end{lemma}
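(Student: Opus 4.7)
The plan is to reduce the commutator to a principal value singular integral with an antisymmetric kernel and then invoke Calderón's classical first-commutator theorem. First, I would derive a clean pointwise representation for $[D,f]g := D(fg) - fDg$. Starting from the symmetric representation of $D$ in Lemma \ref{lem-fra},
\[
Du(x) = -\frac{C}{2}\int_{\R} \frac{u(x+y)+u(x-y)-2u(x)}{y^2}\,dy,
\]
apply this to $u = fg$ and subtract $f(x)Dg(x)$. The numerator collapses to $[f(x+y)-f(x)]g(x+y) + [f(x-y)-f(x)]g(x-y)$, and the symmetry $y \mapsto -y$ combines the two pieces into the single expression
\[
[D,f]g(x) = -C\,\mathrm{p.v.}\int_{\R} \frac{f(z) - f(x)}{(z-x)^2}\, g(z)\, dz.
\]
The kernel $K(x,z) := (f(z)-f(x))/(z-x)^2$ is antisymmetric, $K(z,x) = -K(x,z)$, so this is a bona fide singular integral operator, and the problem reduces to its $L^2$-boundedness with norm controlled by $\|\nabla f\|_{L^\infty}$.

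Next, I would split the divided difference via Taylor expansion, writing $f(z)-f(x) = (z-x)\int_0^1 f'(x+t(z-x))\,dt$, so that formally $[D,f]g = -C\int_0^1 T_t g\,dt$ with
\[
T_t g(x) := \mathrm{p.v.}\int_{\R} \frac{f'(x + t(z-x))}{z - x}\, g(z)\, dz.
\]
At the endpoints the bound is immediate: $T_0 g(x) = -\pi f'(x)\, Hg(x)$ and $T_1 g(x) = -\pi H(f'g)(x)$, where $H$ is the Hilbert transform, and both satisfy $\|T_0g\|_{L^2} + \|T_1 g\|_{L^2} \leq C\|\nabla f\|_{L^\infty}\|g\|_{L^2}$ by the $L^2$-boundedness of $H$. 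For intermediate $t \in (0,1)$ one changes variables $w = x + t(z-x)$ to reinterpret $T_t$ as a dilated Hilbert-type operator whose uniform $L^2$-boundedness (with the correct dependence on $\|\nabla f\|_{L^\infty}$) is precisely Calderón's first-commutator theorem \cite{C65}. Integrating over $t \in [0,1]$ and applying Minkowski's inequality in the integral variable yields the stated bound.

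The main obstacle is the uniform control $\|T_t g\|_{L^2} \leq C\|\nabla f\|_{L^\infty}\|g\|_{L^2}$ across $t \in (0,1)$: one cannot shortcut this via a naive pointwise estimate of the form $\|(Df)g\|_{L^2} \leq \|Df\|_{L^\infty}\|g\|_{L^2}$, because $Df$ generically fails to be bounded when $f$ is merely Lipschitz (e.g.\ $f(x) = |x|$ has $Df$ with a logarithmic singularity). The boundedness must exploit the oscillation encoded in the antisymmetric kernel, and the standard route is a $T(1)$-type argument combined with careful BMO control of the paraproduct associated with the smoothly dilated kernel. This is the delicate analytic heart of the proof, and it is exactly Calderón's contribution in \cite{C65}; once that uniform bound on $T_t$ is in hand, the remaining assembly is routine.
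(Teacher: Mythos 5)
The paper gives no proof for this lemma; it is stated verbatim with a citation to Calder\'on's Theorem~2 in \cite{C65}, which is precisely the $L^2$-boundedness of the first Calder\'on commutator. Your contribution is therefore an elaboration rather than a rival proof. The core of your argument is correct and useful: starting from the symmetric Lévy–Khintchine representation of $D$ and subtracting $fDg$, the terms involving $g(x)$ cancel and the change of variable $z = x\pm y$ folds the two pieces into
\begin{align*}
D(fg)(x) - f(x)Dg(x) = -C\,\mathrm{p.v.}\int_{\R}\frac{f(z)-f(x)}{(z-x)^2}\,g(z)\,dz,
\end{align*}
which is exactly the operator covered by Theorem~2 of \cite{C65} (with $A=f$, $a=A'=\nabla f\in L^\infty$). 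At that point you can — and should — stop: Calder\'on's theorem applies directly to this kernel and gives $\|D(fg)-fDg\|_{L^2}\le C\|\nabla f\|_{L^\infty}\|g\|_{L^2}$.

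The subsequent Taylor split $f(z)-f(x)=(z-x)\int_0^1 f'(x+t(z-x))\,dt$ and the family $T_t$ are a detour that introduces a mild circularity. Your endpoint computations $T_0 g = -\pi f' Hg$ and $T_1 g = -\pi H(f'g)$ are correct, but the uniform-in-$t$ bound on $T_t$ is \emph{not} what Theorem~2 of \cite{C65} states; that theorem bounds the integrated operator, i.e.\ the very commutator you have already written in closed form in the previous step. Proving the uniform $T_t$ bound and then integrating is indeed \emph{one} way to prove Calder\'on's theorem (it is essentially the route through Carleson-measure estimates for the rotated Cauchy kernel), but since you invoke \cite{C65} for the hard uniform bound, the decomposition buys nothing and reads as using the theorem to prove an ingredient of its own proof. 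Either skip the Taylor split and cite Calder\'on once for the displayed kernel operator, or, if you want a self-contained argument, supply an independent proof of the $T_t$ bound (e.g.\ via a square-function/Carleson estimate) rather than re-citing \cite{C65} there.
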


\begin{lemma}(\cite[Lemma F.1]{K-L-R})\label{lem-inter}
Suppose $d\geq 1$ and let $\phi: \R^d\rightarrow\R$ be such that $\na \phi$ and $\Delta \phi$ belong to $L^\9(\R^d)$.
Then, there exists some $C>0$ such that
\ben
|\<\na \phi\cdot \na u, u\>|\leq C(\|\na \phi\|_{L^\9}\|u\|^2_{\dot{H}^\half}+\|\Delta \phi\|_{L^\9}\|u\|^2_{L^2}).
\enn
\end{lemma}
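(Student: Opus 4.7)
The plan is to decompose $\mathfrak{A}:=\<\na\phi\cdot\na u, u\>$ into its real and imaginary parts and treat each with different tools. For the real part, I would use the pointwise identity $2\,\mathrm{Re}(\bar u \na u) = \na|u|^2$, which, combined with integration by parts, yields the exact formula
\ben
2\,\mathrm{Re}\,\mathfrak{A} = \int \na\phi\cdot \na |u|^2 \, dx = -\int \Delta\phi\, |u|^2\, dx.
\enn
This immediately gives the $\|\Delta\phi\|_{L^\9}\|u\|_{L^2}^2$ contribution. The entire difficulty is thus concentrated in the imaginary part.

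For $\mathrm{Im}\,\mathfrak{A}$, writing $\mathfrak{A}-\overline{\mathfrak{A}}$ and using the reality of $\phi$, I would express
\ben
2\,\mathrm{Im}\,\mathfrak{A} = \int \na\phi\cdot j(u)\, dx, \qquad j(u):=\mathrm{Im}(\bar u\,\na u),
\enn
so that the task reduces to a bilinear estimate
$|\<\na\phi, j(u)\>| \lesssim \|\na\phi\|_{L^\9}\|u\|_{\dot H^{1/2}}^2$. I would establish this through a Littlewood--Paley decomposition $u = \sum_N P_N u$, writing $j(u)=\sum_{N,M} j(P_N u, P_M u)$ with the polarized current $j(f,g):=\frac12\,\mathrm{Im}(\bar f\,\na g + \bar g\,\na f)$. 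In the diagonal regime $N\sim M$, direct application of H\"older's inequality (using Bernstein to dominate $\na$ by $N$) and Plancherel--Parseval gives $|\<\na\phi, j(P_N u, P_M u)\>| \lesssim \|\na\phi\|_{L^\9}\,N\|P_N u\|_{L^2}\|P_M u\|_{L^2}$, and summing $N^{1/2}\|P_N u\|_{L^2}\cdot M^{1/2}\|P_M u\|_{L^2}$ by Cauchy--Schwarz produces the desired $\|\na\phi\|_{L^\9}\|u\|_{\dot H^{1/2}}^2$.

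For the off-diagonal regime, say $N\gg M$, I would symmetrize the pair $(N,M)\leftrightarrow(M,N)$ together. The antisymmetry in $j(f,g)$, combined with the Fourier-side identity $|\xi|^2-|\eta|^2 = (\xi-\eta)\cdot(\xi+\eta)$ applied to the bilinear multiplier, allows one to trade the apparent full derivative for $|\xi-\eta|^2\widehat{\phi}(\xi-\eta)=-\widehat{\Delta\phi}(\xi-\eta)$ plus a symmetric main piece of multiplier size $|\xi-\eta|\cdot(|\xi||\eta|)^{1/2}=|\widehat{\na\phi}(\xi-\eta)|\cdot(|\xi||\eta|)^{1/2}$. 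The first contribution is bounded by $\|\Delta\phi\|_{L^\9}\|u\|_{L^2}^2$ via Plancherel, while the second is a Calder\'on-type commutator that is controlled by $\|\na\phi\|_{L^\9}\|u\|_{\dot H^{1/2}}^2$ using Lemma \ref{lem-l2-pro2} together with the Kato--Ponce-type reformulation $\na\phi\cdot \na u = \na\cdot(u\na\phi) - u\Delta\phi$.

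The main obstacle will be in controlling the high-high-to-low paraproduct in the off-diagonal sum, where both $u$-frequencies are high and comparable but the output frequency pairs against low frequencies of $\na\phi$. A naive estimate loses a full derivative, and one genuinely needs the half-derivative cancellation built into the antisymmetric current $j(u)$. Making this cancellation quantitative---so as to absorb the error into exactly $\|\na\phi\|_{L^\9}\|u\|_{\dot H^{1/2}}^2$ without any logarithmic loss in the dyadic summation---is where the argument is most delicate and requires the careful symbol decomposition sketched above (essentially a $T(1)$-type argument tailored to the operator $\na\phi\cdot\na$ viewed as a zeroth-order operator on $\dot H^{1/2}$ after commuting with $D^{1/2}$).
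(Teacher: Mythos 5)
Setting $\mathfrak{A}:=\langle\na\phi\cdot\na u,u\rangle$ as you do, your real-part identity is correct (up to a harmless factor: $\mathrm{Im}\,\mathfrak{A}=\int\na\phi\cdot j(u)\,dx$, not $2\,\mathrm{Im}\,\mathfrak{A}$), but the reduction of the imaginary part to the inequality $|\langle\na\phi,j(u)\rangle|\lesssim\|\na\phi\|_{L^\9}\|u\|_{\dot H^{1/2}}^2$ is false: the $\|\Delta\phi\|_{L^\9}\|u\|_{L^2}^2$ term cannot be dropped from the bound on $\mathrm{Im}\,\mathfrak{A}$. In $d=1$ take $\phi(x)=N^{-1}\sin(Nx)$ and $u=\chi+N^{-1/2}e^{iNx}\chi$ for a fixed real Schwartz bump $\chi$; a direct computation gives $j(u)=\chi^2+N^{1/2}\cos(Nx)\chi^2$, hence $\int\phi'\,j(u)\,dx=\tfrac12 N^{1/2}\|\chi\|_{L^2}^2(1+o(1))$, while $\|\phi'\|_{L^\9}=1$ and $\|u\|_{\dot H^{1/2}}^2=O(1)$ --- a violation of order $N^{1/2}$, compatible with the lemma only because $\|\phi''\|_{L^\9}=N$. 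Your closing paragraph, which aims to ``absorb the error into exactly $\|\na\phi\|_{L^\9}\|u\|_{\dot H^{1/2}}^2$,'' is therefore chasing an unattainable target; the $\Delta\phi$ contribution must persist through the entire imaginary-part estimate, exactly as your own Fourier-side computation hints.

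There is also a structural slip: $j(f,g)=\tfrac12\mathrm{Im}(\ol f\na g+\ol g\na f)$ is \emph{symmetric} in $(f,g)$, not antisymmetric, so symmetrizing $(N,M)\leftrightarrow(M,N)$ produces no cancellation by itself. The cancellation you actually want is $j(f,g)=\na\,\mathrm{Im}(\ol f g)+2\,\mathrm{Im}(\ol g\na f)$, whose first term, once paired with $\na\phi$ and integrated by parts, produces precisely a $\Delta\phi$ term; your Fourier identity $(|\xi|^2-|\eta|^2)=(\xi-\eta)\cdot(\xi+\eta)$ is the same fact in disguise, and what it yields is $\widehat{\Delta\phi}$, not an absorbed error. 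Even granting this, the dyadic sums as written do not close (the low--high paraproduct pieces estimate naively against $B^0_{2,1}$ rather than $L^2$, and in $d\geq2$ the implicit Bernstein step transferring $\Delta\phi$ to the Hessian is not $L^\9$-bounded). The cleaner route, used elsewhere in this paper around \eqref{est-pv-4} with the citation to \cite[Lemma 23.1]{Tar}, is bilinear interpolation: with $B(u,v):=\langle\na\phi\cdot\na u,v\rangle$ one has $|B(u,v)|\leq\|\na\phi\|_{L^\9}\|u\|_{\dot H^1}\|v\|_{L^2}$ directly and $|B(u,v)|\leq\|u\|_{L^2}(\|\na\phi\|_{L^\9}\|\na v\|_{L^2}+\|\Delta\phi\|_{L^\9}\|v\|_{L^2})$ after integrating by parts onto $v$; interpolating at $\theta=\tfrac12$, taking $u=v$, and combining with the exact real-part identity gives the lemma with no dyadic bookkeeping at all.
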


The following decoupling estimate permits to control
the interactions between different bubbles,
as well as the interplay between bubbles and the localization functions
with supports away from the bubble center.

\begin{lemma} [Decoupling estimates] \label{lem-dep}
Assume that $f, g\in C^\9(\R)$ have  the decay estimate $|f(x)|+|g(x)|\leq \frac{C_1}{1+|x|^2}$ for some constant $C_1>0$.
Assume that $h\in C^\9(\R)$ with $0\leq h\leq1$, and $h(x)=0$ for $|x|\leq \sigma$ where $\sigma$ is a positive constant.
Then, there exists $C>0$ such that for  any small $\epsilon>0$,
\begin{align*}
   \int \big|f(x)g\(x+\frac{1}{\epsilon}\)\big|dx\leq C\epsilon^2,
\end{align*}
and
\begin{align*}
  \int \big|f\(\frac{x}{\epsilon}\)\big|^2h(x)dx\leq C\epsilon^4.
\end{align*}
\end{lemma}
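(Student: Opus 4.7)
The plan is to prove both inequalities by elementary decay arguments, exploiting the fact that when one factor is concentrated near the origin and the other is either translated far away or cut off away from the origin, the two supports effectively decouple.

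For the first inequality, I would split the integration domain into the two regions $\{|x|\leq 1/(2\epsilon)\}$ and $\{|x|\geq 1/(2\epsilon)\}$. On the first region, the translated variable satisfies $|x+1/\epsilon|\geq 1/(2\epsilon)$, so the hypothesis gives $|g(x+1/\epsilon)|\leq 4C_1\epsilon^2$ pointwise; pulling this sup out of the integral and noting that $f\in L^1(\R)$ (from the $\langle x\rangle^{-2}$ bound) yields the $\calo(\epsilon^2)$ contribution. On the second region the roles are symmetric: $|f(x)|\leq 4C_1\epsilon^2$ pointwise, and after a change of variables $g$ is still integrable, giving the matching bound.

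For the second inequality, I would rescale via $y=x/\epsilon$ so that the integral becomes $\epsilon\int |f(y)|^2\, h(\epsilon y)\,dy$. By the support assumption on $h$, the integrand is supported in $\{|y|\geq \sigma/\epsilon\}$, and on that set the pointwise bound $|f(y)|^2\leq C_1^2 \langle y\rangle^{-4}$ together with $0\leq h\leq 1$ reduces the estimate to a one-dimensional tail computation $\int_{|y|\geq \sigma/\epsilon} |y|^{-4}\,dy \lesssim (\sigma/\epsilon)^{-3}$, which combined with the prefactor $\epsilon$ yields the $\calo(\epsilon^4)$ bound.

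There is really no main obstacle here: the lemma is a quantitative version of the standard heuristic that bubbles localized at different scales or positions only interact through their polynomial tails, and the only ingredient beyond Fubini and change of variables is the explicit $\langle x\rangle^{-2}$ decay assumed on $f,g$. The constants depend only on $C_1$ and $\sigma$, as required, and the argument extends verbatim to the slightly more general situation (used implicitly in Sections 4--6) where $g$ is replaced by a translate of a ground state tail and $h$ by a localization function $\Phi_k$ supported away from the bubble center $x_k$.
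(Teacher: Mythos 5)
Your proposal is correct and follows essentially the same route as the paper: both use the split $\{|x|\le 1/(2\epsilon)\}\cup\{|x|\ge 1/(2\epsilon)\}$ for the first estimate, pulling out the $\calo(\epsilon^2)$ pointwise bound on the far factor and integrating the other, and both reduce the second estimate to the tail integral $\int_{|x|\ge\sigma}\langle x/\epsilon\rangle^{-4}\,dx=\calo(\epsilon^4)$. The only cosmetic difference is that you perform the change of variables $y=x/\epsilon$ before estimating the tail, whereas the paper evaluates the tail integral directly in $x$; the two computations are identical.
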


\begin{proof}
By the assumption on $f$, $g$ and $h$, it follows that for some $C>0$,
\ben\ba
\int \big|f(x)g\(x+\frac{1}{\epsilon}\)\big|dx
&\leq C\int_{|x|\leq \frac{1}{2\epsilon}} \big|f(x)g\(x+\frac{1}{\epsilon}\)\big|dx
+C\int_{|x|\geq \frac{1}{2\epsilon}} \big|f(x)g\(x+\frac{1}{\epsilon}\)\big|dx\\
&\leq C\epsilon^2\int|f(x)|+|g(x)|dx\leq C\epsilon^2.
\ea\enn
Moreover, one has
\ben\ba
\int \big|f\(\frac{x}{\epsilon}\)\big|^2h(x)dx
\leq C\int_{|x|\geq \sigma} \big|f\(\frac{x}{\epsilon}\)\big|^2dx
\leq C\epsilon^4\int_{|x|\geq \sigma} |x|^{-4}dx
\leq C\epsilon^4.
\ea\enn
Thus, the proof is complete.
\end{proof}

\section{Linearized operators around the ground state} \label{Subsec-Coer}
In this appendix we will collect  some coercivity properties of the linearized operators around the ground state.

We first recall from \cite{F-L,KMR} that the ground state is a smooth function satisfying the decay estimate
\be\label{Q-decay}
 |Q(x)|+|\Lambda Q(x)|+|\Lambda^2 Q(x)|\leq \frac{C}{1+|x|^2},
\ee
for some $C>0$.
Let $L=(L_+,L_-)$ be the linearized operator around the ground state,
defined by
\begin{align*}
     L_{+}:= D + I -3Q^2, \ \
    L_{-}:= D +I -Q^2.
\end{align*}
The generalized null space of  $L$ is
spanned by $\{Q, G_1, S_1, \na Q, \Lambda Q, \rho\}$,
where
$G_1$ is the unique odd solution to the equation
\be\label{eq-g1}
L_-G_1=-\na Q,
\ee
$S_1$ is the unique even solution to the equation
\be\label{eq-s1}
L_-S_1=\Lambda Q,
\ee
and
$\rho$ is the unique even solution to the equation
\be\label{eq-rho1}
L_{+}\rho= S_1.
\ee
Thus, we have the following algebraic identities:
\be \ba \label{Q-kernel}
&L_+ \na Q =0,\ \ L_+ \Lambda Q = -2 Q,\ \ L_+ \rho = S_1, \\
&L_{-} Q =0,\ \ L_{-} G_1 = - \na Q,\ \ L_{-} S_1=  \Lambda Q.
\ea\ee

For any complex-valued $H^\half$ function $f = f_1 + i f_2$
in terms of the real and imaginary parts,
we set
\ben
(Lf,f) :=\int f_1L_+f_1dx+\int f_2L_-f_2dx,
\enn
and define the scalar products along the unstable directions
in the null space
\begin{align} \label{Scal-def}
Scal(f):=\<f_1,Q\>^2+\<f_1,G_1\>^2+\<f_1,S_1\>^2+\<f_2,\nabla Q\>^2+\<f_2,\Lambda Q\>^2+\<f_2,\rho_1\>^2.
\end{align}

One has the following coercivity estimate
\begin{lemma} [\cite{K-L-R}, Coercivity estimate]   \label{Lem-coer}
There exists a positive constant $C>0$,  such that
\begin{align} \label{coer}
(Lf,f)\geq&  C\|f\|_{H^\half}^2
             -\frac{1}{C}Scal(f),\ \ \forall f \in H^\half(\R).
\end{align}
\end{lemma}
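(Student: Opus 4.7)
The plan is to split $(Lf,f) = (L_+ f_1, f_1) + (L_- f_2, f_2)$, establish coercivity on a finite-codimension subspace for each operator via spectral theory, and then extend to all $f$ by an explicit orthogonal decomposition. Both $L_\pm$ are self-adjoint on $L^2(\R)$ with form domain $H^{1/2}(\R)$; since $Q^2$ decays like $\langle x\rangle^{-4}$ by \eqref{Q-decay}, multiplication by $Q^2$ is a relatively compact perturbation of $D+I$, so Weyl's theorem yields $\sigma_{\mathrm{ess}}(L_\pm)=[1,\infty)$ with only finitely many eigenvalues below $1$. For $L_-$, positivity of $Q$ together with $L_-Q=0$ and a Perron--Frobenius-type argument gives $\ker L_- = \mathrm{span}\{Q\}$ and $L_-\geq 0$ with a spectral gap on $\{Q\}^\perp$. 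For $L_+$, the identity $L_+\nabla Q = 0$ provides one kernel direction, while $(L_+Q,Q) = -2\|Q\|_{L^4}^4 < 0$ forces a negative direction; the nondegeneracy result of Frank--Lenzmann \cite{F-L} ensures these are the only unstable or null directions, so $L_+$ has exactly one simple negative eigenvalue $-\mu$ with even eigenfunction $\chi_-$ and $\ker L_+ = \mathrm{span}\{\nabla Q\}$.

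The central transversality step is to verify
\begin{equation*}
\mathrm{span}\{Q,G_1,S_1\}^\perp \cap \mathrm{span}\{\chi_-,\nabla Q\} = \{0\}, \qquad
\mathrm{span}\{\nabla Q,\Lambda Q,\rho\}^\perp \cap \mathrm{span}\{Q\} = \{0\}.
\end{equation*}
The parity structure ($Q,S_1,\Lambda Q,\rho,\chi_-$ even; $G_1,\nabla Q$ odd) makes the relevant Gram matrices block-diagonal. For the $L_+$ side, the nontrivial entries are $\langle Q,\chi_-\rangle\neq 0$ (otherwise $(L_+Q,Q)\geq 0$, contradicting the explicit computation), $\langle S_1,\chi_-\rangle$ (generically nonzero), and $\langle G_1,\nabla Q\rangle = -(L_-G_1,G_1) < 0$ since $G_1\notin\ker L_-$; the matrix has full rank $2$. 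For the $L_-$ side, $\langle Q,\nabla Q\rangle=0$ by parity, $\langle Q,\Lambda Q\rangle=0$ by $L^2$-criticality, and the key identity
\begin{equation*}
\langle Q,\rho\rangle = -\tfrac{1}{2}\langle L_+\Lambda Q,\rho\rangle = -\tfrac{1}{2}\langle\Lambda Q,L_+\rho\rangle = -\tfrac{1}{2}(L_-S_1,S_1)\neq 0,
\end{equation*}
using \eqref{Q-kernel}, provides the required transversality. A standard Rellich compactness argument combined with the spectral decomposition then yields $(L_+h,h)\geq c\|h\|_{H^\half}^2$ on $\{Q,G_1,S_1\}^\perp$ and $(L_-g,g)\geq c\|g\|_{H^\half}^2$ on $\{\nabla Q,\Lambda Q,\rho\}^\perp$.

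Finally, to reach all $f\in H^\half(\R)$, I decompose $f_1 = \alpha_1 Q + \alpha_2 G_1 + \alpha_3 S_1 + h$ with $h\perp\{Q,G_1,S_1\}$, noting $\sum_i\alpha_i^2\lesssim \mathrm{Scal}(f)$, and similarly for $f_2$. Expanding $(L_+f_1,f_1)$, applying the coercivity to the $h$-part, and absorbing cross terms via a Cauchy--Schwarz plus small-$\varepsilon$ argument delivers $(L_+f_1,f_1)\geq c\|f_1\|_{H^\half}^2 - C\,\mathrm{Scal}(f)$; the $L_-$ piece is identical, and summing gives \eqref{coer}. The main technical obstacle I anticipate is the compactness step that produces the spectral-gap constant: $H^\half(\R)$ does not embed compactly into $L^2(\R)$ globally, so the gap has to be extracted using that $Q^2$ makes the form $\int Q^2 f^2$ locally compact, combined with a concentration-compactness argument ruling out escape of mass to infinity where the bulk spectrum $[1,\infty)$ of $D+I$ would otherwise take over.
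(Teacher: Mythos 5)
The paper deliberately does not reprove this lemma: it cites Lemma B.4 of Krieger--Lenzmann--Rapha\"el and says only that the proof is ``a variational argument'' built on the Frank--Lenzmann nondegeneracy. Your spectral-decomposition route is a legitimate alternative strategy in principle, and your treatment of the $L_-$ half is correct: since $L_-\geq 0$ with $\ker L_- = \mathrm{span}\{Q\}$, the single transversality $\langle Q,\rho\rangle = -\tfrac12\langle L_-S_1,S_1\rangle\neq 0$ (your computation is right) together with a compactness step gives the claim on $\{\nabla Q,\Lambda Q,\rho\}^\perp$.

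The gap is on the $L_+$ side, and it is not the compactness issue you flag at the end but the transversality step itself. Checking that $V:=\{Q,G_1,S_1\}^\perp$ meets $\mathrm{span}\{\chi_-,\nabla Q\}$ only in $\{0\}$ is \emph{necessary} for coercivity, but it is not \emph{sufficient} when $L_+$ has a genuine negative eigenvalue. A two-dimensional toy example makes this concrete: with $L=\mathrm{diag}(-1,\varepsilon)$, $0<\varepsilon\ll 1$, the line $V=\{(1,c)\}^\perp=\mathrm{span}\{(-c,1)\}$ is transversal to the negative eigendirection $(1,0)$ for every $c$, yet $(Lv,v)=\varepsilon - c^2<0$ on $V$ as soon as $c>\sqrt\varepsilon$. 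In the present problem the culprit is the marginal direction $\Lambda Q$: from $L_+\Lambda Q=-2Q$ and $\langle Q,\Lambda Q\rangle=0$ one has $(L_+\Lambda Q,\Lambda Q)=0$, while $\Lambda Q$ is orthogonal to both $Q$ and $G_1$ (parity) and is not, in general, a spectral combination of $\chi_-$ and $\nabla Q$. So after your spectral split $h=a\chi_-+b\nabla Q+h^+$, the orthogonality constraints control $a,b$ by $\|h^+\|_{L^2}$, but you then need $-\mu a^2 + (L_+h^+,h^+)\geq c\|h\|^2$, and the spectral gap above $0$ does not by itself dominate $\mu a^2$ --- precisely because $\Lambda Q$ is lurking with $(L_+\Lambda Q,\Lambda Q)=0$. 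The ``standard Rellich argument'' therefore does not close your step.

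What is actually needed is to eliminate $\Lambda Q$, and this is exactly what the third constraint $S_1$ does, via $\langle\Lambda Q,S_1\rangle=\langle L_-S_1,S_1\rangle>0$ --- a fact your proof never uses. The cleanest way to organize this is the variational route the paper points to: either (i) first establish the Weinstein-type inequality $L_+\geq 0$ on $\{Q\}^\perp$ (which for the half-wave equation is itself a consequence of the sharp Gagliardo--Nirenberg characterization of $Q$ and the nondegeneracy), identify the form kernel of $L_+|_{\{Q\}^\perp}$ as $\mathrm{span}\{\nabla Q,\Lambda Q\}$, and then verify the \emph{correct} transversality $V\cap\mathrm{span}\{\nabla Q,\Lambda Q\}=\{0\}$, which reduces to $\langle\nabla Q,G_1\rangle\neq 0$ and $\langle\Lambda Q,S_1\rangle\neq 0$; or (ii) run the constrained minimization of $(L_+h,h)$ over $V\cap\{\|h\|_{L^2}=1\}$, and at a hypothetical nonpositive minimizer test the Euler--Lagrange identity $L_+h_0=mh_0+\alpha Q+\beta G_1+\gamma S_1$ against $\nabla Q$, $\Lambda Q$, $\rho$ using the algebraic identities $L_+\nabla Q=0$, $L_+\Lambda Q=-2Q$, $L_+\rho=S_1$ to force the multipliers to vanish. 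Both routes use the full generalized-null-space structure in an essential way; your argument invokes that structure only in the Gram-matrix computation and then bypasses it in the coercivity step, which is where the proof actually breaks.
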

The proof is based on the non-degeneracy of the linearized operator (\cite{F-L}) and a variational argument,
see \cite[Lemma B.4]{K-L-R} for more details.

In the multi-bubble case, it is important to derive
the following localized version of the coercivity estimate
in the construction of multi-bubble blow-up solutions.

\begin{lemma} [Localized coercivity estimate]   \label{Lem-coer-local}
Let $0<a<1$ and $\phi: \R\rightarrow\R$ be a positive smooth radially decreasing function,
such that
$\phi(x) = 1$ for $|x|\leq 1$,
$\phi(x) = |x|^{-a}$ for $|x|\geq 2$,
$0<\phi \leq 1$.
Set $\phi_A(x) :=\phi(\frac{x}{A})$, $A>0$.
Then, there exists $C>0$ such that
for $A$ large enough,
\begin{align*}
\int (|f|^2+|D^\half f|^2)\phi_A -3Q^2f_1^2-Q^2f_2^2dx
 \geq C\int(|f|^2+|D^\half f|^2)\phi_A dx-\frac{1}{C}Scal(f), \ \ \forall f \in H^\half(\R).
\end{align*}
\end{lemma}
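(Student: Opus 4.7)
The plan is to reduce the localized coercivity to the global version already established in Lemma \ref{Lem-coer} by the substitution $g := \sqrt{\phi_A}\, f$, which belongs to $H^\half(\R)$ since $0 < \phi_A \leq 1$. The quadratic forms for $f$ and $g$ are immediately related by $\int |f|^2 \phi_A \,dx = \|g\|_{L^2}^2$, so the essential task is to compare $\int |D^\half f|^2 \phi_A\, dx$ with $\|D^\half g\|_{L^2}^2$ and the inner products against the basis of the generalized null space.

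First I would establish the commutator identity
\[
\int |D^\half f|^2 \phi_A\, dx = \|D^\half g\|_{L^2}^2 + \mathcal{E}_A(f),
\]
where the error $\mathcal{E}_A$ satisfies $|\mathcal{E}_A(f)| \leq \ve(A)(\|g\|_{L^2}^2 + \|D^\half g\|_{L^2}^2)$ with $\ve(A) \to 0$ as $A\to\infty$. Expanding $|D^\half g|^2 = |\sqrt{\phi_A} D^\half f + [D^\half, \sqrt{\phi_A}] f|^2$ reduces matters to bounding the commutator $[D^\half, \sqrt{\phi_A}] f$ in $L^2$. I would apply Lemma \ref{lem-l2-pro} with $\|\widehat{D^\half \sqrt{\phi_A}}\|_{L^1}$: using that $\sqrt{\phi_A}(x) = \sqrt{\phi}(x/A)$ is smooth with polynomial decay, one splits the frequency integral into $|\xi| \lesssim A^{-1}$ (handled by the total-variation bound on $\sqrt{\phi_A}$) and $|\xi| \gtrsim A^{-1}$ (handled by integration by parts using $\nabla^{\nu}\sqrt{\phi_A}$, which gain a factor $A^{-\nu}$ by scaling). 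This yields $\|\widehat{D^\half \sqrt{\phi_A}}\|_{L^1} \leq C A^{-\kappa}$ for some $\kappa>0$.

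Second, since $Q$, $\nabla Q$, $\Lambda Q$, $G_1$, $S_1$ and $\rho$ are smooth and obey $\langle x\rangle^{-2}$ decay by \eqref{Q-decay} and Lemma \ref{lem-app}, the weights $\phi_A$ satisfy $\phi_A \to 1$ uniformly on compact sets. On the support where $Q^2$ is concentrated, writing $\int Q^2 f_j^2\, dx = \int Q^2 (g_j/\sqrt{\phi_A})^2\, dx$ and splitting into $|x|\leq A$ (where $\phi_A \equiv 1$, so the two coincide) and $|x|\geq A$ (where $Q^2 \leq C\langle x\rangle^{-4}$ while $\phi_A \geq CA^a|x|^{-a}$), I would obtain
\[
\Bigl|\int Q^2 f_j^2\, dx - \int Q^2 g_j^2\, dx\Bigr| \leq C A^{-\kappa'} \int |f|^2 \phi_A\, dx,
\]
and by the same reasoning $|Scal(g) - Scal(f)| \leq C A^{-\kappa'} \int (|f|^2 + |D^\half f|^2)\phi_A\, dx$ for some $\kappa'>0$.

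Finally, Lemma \ref{Lem-coer} applied to $g$ gives $(Lg,g) \geq C_0 \|g\|_{H^\half}^2 - C_0^{-1} Scal(g)$. Substituting the comparisons above rewrites the left-hand side as $\int (|f|^2+|D^\half f|^2)\phi_A - 3Q^2 f_1^2 - Q^2 f_2^2\, dx$ up to errors bounded by $\ve(A)\int (|f|^2+|D^\half f|^2)\phi_A\, dx$, and similarly on the right-hand side. Choosing $A$ sufficiently large so that all $\ve(A), A^{-\kappa}, A^{-\kappa'}$ are smaller than $C_0/4$, the errors are absorbed into the main terms and the claimed inequality follows with constant $C_0/2$. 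The main obstacle is the commutator step: the slow polynomial decay of $\sqrt{\phi}$ prevents a direct Schwartz-class argument for $\widehat{D^\half \sqrt{\phi_A}}$, and the frequency split must be carried out carefully to extract the decay in $A$ needed to close the estimate.
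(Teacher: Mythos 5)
Your reduction coincides in broad outline with the paper's: both set $\tilde f:=\sqrt{\phi_A}\,f$, apply the global coercivity (Lemma \ref{Lem-coer}) to $\tilde f$, and then compare the weighted quadratic form of $f$ with $(L\tilde f,\tilde f)$ and $Scal(f)$ with $Scal(\tilde f)$. Your comparisons of the potential terms and of the scalar products are sound: since $Q^2\lesssim\langle x\rangle^{-4}$ decays much faster than $\phi_A^{-1}\lesssim A^{-a}|x|^{a}$ grows on $\{|x|\geq A\}$, these errors are $O(A^{-\kappa})\|\tilde f\|_{L^2}^2$ exactly as in the paper. The gap is in the commutator step, the one you correctly flag as the main obstacle, but the obstruction is not the one you name. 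Lemma \ref{lem-l2-pro} applied to $[D^{1/2},\sqrt{\phi_A}]f$ yields the bound $C\|\widehat{D^{1/2}\sqrt{\phi_A}}\|_{L^1}\,\|f\|_{L^2}$, and you are right that $\|\widehat{D^{1/2}\sqrt{\phi_A}}\|_{L^1}\sim A^{-1/2}$ by scaling. But this controls the commutator by the \emph{unweighted} norm $\|f\|_{L^2}$, whereas the left side of the coercivity inequality only measures $f$ through $\|\sqrt{\phi_A}f\|_{L^2}$. Because $\phi_A(x)\sim A^a|x|^{-a}\to 0$ at infinity, the two are not comparable: for $f$ concentrated on $\{R\leq|x|\leq 2R\}$ with $R\gg A$ one has $\|f\|_{L^2}^2/\|\sqrt{\phi_A}f\|_{L^2}^2\sim (R/A)^a\to\infty$. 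Hence the term $A^{-1/2}\|f\|_{L^2}^2$ cannot be absorbed into $\int(|f|^2+|D^{1/2}f|^2)\phi_A\,dx$, and the asserted estimate $|\mathcal{E}_A(f)|\leq \varepsilon(A)\|\tilde f\|_{H^{1/2}}^2$ does not follow from the Fourier-multiplier bound.

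The failure is structural: the Fourier-side commutator estimate separates the weight factors $\phi_A^{\pm 1/2}$ and loses the cancellation between them. The paper handles the same commutator via the physical-space representation of Lemma \ref{lem-formula}, writing $g=\phi_A^{1/2}\big(D^{1/2}(\tilde f\phi_A^{-1/2})-\phi_A^{-1/2}D^{1/2}\tilde f\big)$ as an integral against the singular kernel $|y|^{-3/2}$, applying Minkowski, and splitting $(x,y)$ into four regions. The point that makes this work is the pointwise bound on the combined quantity $\phi_A(x)\,\bigl|\phi_A^{-1/2}(x\pm y)-\phi_A^{-1/2}(x)\bigr|^2$, e.g.\ $\lesssim A^{-2}|y|^2$ for $|x|\geq A/2$, $|y|\leq A/4$ and $\lesssim 1+|y|^a$ for $|y|\geq A/4$, so that the $y$-integral closes with a small factor times $\|\tilde f\|_{L^2}$ rather than $\|f\|_{L^2}$. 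You would need to replace your commutator step with an argument of this weighted, physical-space type; as written, the proof does not close.
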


\begin{proof}
Let $\tilde{f}:=f\phi_A^{\frac12}$ with $\tilde{f}=\tilde{f}_1+i\tilde{f}_2$.
We have
\be\label{g}
D^\half f\phi_A^\half=D^\half\tilde{f}+\left(D^\half(\tilde{f}\phi_A^{-\half})-D^\half \tilde{f}\phi_A^{-\half}\right)\phi_A^\half=:D^\half\tilde{f}+g.
\ee
It follows that
\begin{align}\label{m}
&\int (|D^\half f|^2 +|f|^2)\phi_A -3Q^{2}f_1^2-Q^{2}f_2^2dx \nonumber \\
=&\int|D^\half \tilde {f}|^2+|\tilde {f}|^2-3Q^{2} \tilde {f}_1^2-Q^{2} \tilde {f}_2^2dx + \int(1-\phi^{-1}_A)(3Q^{2}\tilde{f}_1^2+Q^{2}\tilde{f}_2^2)dx
 +\|g\|_{L^2}^2
+2{\rm Re} \< D^\half\tilde{f}, g\>  \nonumber \\
=&: K_1+K_2+K_3+K_4.
\end{align}
In the sequel,
let us estimate each term on the right-hand side above.

{\it $(i)$ Estimate of $K_1$.} We claim that there exists $C(A)>0$ with $\lim_{A\rightarrow+\infty}C(A)=0$ such that
\be\label{est-scal}
|Scal(\tilde{f})- Scal(f)|\leq C(A)\|\tilde{f}\|_{L^2}^2.
\ee
This along with Lemma \ref{Lem-coer} and \eqref{est-scal}
yields that there exists $C>0$ such that for $A$ sufficiently large,
\be\label{k1}
K_1\geq C\|\tilde{f}\|^2_{H^\half}-\frac{1}{C}Scal(f).
\ee

In order to prove \eqref{est-scal}, we rewrite
\ben
\langle\tilde{f}_1, Q\rangle=\langle f_1, Q\rangle+\int \tilde{f}_1Q(\phi_A^\half-1)\phi_A^{-\half}dx.
\enn
Note that
\ben
|\int \tilde{f}_1Q(\phi_A^\half-1)\phi_A^{-\half}dx|\leq C\int_{|x|\geq A} |\tilde{f}_1Q\phi_A^{-\half}|dx
\leq C\|\tilde{f}\|_{L^2}\left(\int_{|x|\geq A}Q^2\phi_A^{-1}dx\right)^\half.
\enn
By the decay property that $Q(y)\sim\<y\>^{-2}$,
\ben
\int_{|x|\geq A}Q^2\phi_A^{-1}dx=\frac{1}{A}\int_{|y|\geq 1}Q^2(Ay)\phi^{-1}(y)dy
\leq C\frac{1}{A^5}\int_{|y|\geq 1}|y|^{a-4}dy\leq \frac{C}{A^5}.
\enn
Thus, we get
\ben
|\langle\tilde{f}_1, Q\rangle-\langle f_1, Q\rangle|\leq CA^{-\frac{5}{2}}\|\tilde{f}\|_{L^2}.
\enn
Similar arguments also apply to the remaining five unstable directions,
and thus we obtain \eqref{est-scal}, as claimed.

{\it $(ii)$ Estimate of $K_2$.}
Using the decay property of $Q$ again we see that
\be\label{k2}
|K_2|\leq C\int_{|x|\geq A}\phi^{-1}_AQ^{2}|\tilde{f}|^2dx\leq C\|\phi^{-1}_AQ^2\|_{L^\infty(|x|\geq A)}\|\tilde{f}\|_{L^2}^2\leq A^{-4}\|\tilde{f}\|_{L^2}^2.
\ee

{\it $(iii)$ Estimate of $K_3$ and $K_4$.} We claim that there exists $C(A)>0$ with $\lim_{A\rightarrow+\infty}C(A)=0$ such that
\be\label{est-g}
\|g\|_{L^2}\leq C(A)\|\tilde{f}\|_{L^2}.
\ee
This yields that
\be\label{k34}
|K_3|+|K_4|\leq C(A)\|\tilde{f}\|^2_{H^{\frac 12}}.
\ee

In order to prove \eqref{est-g},
by Lemma \ref{lem-formula} and the Minkowski inequality,
\ben\ba
\|g\|_{L^2}&=C\left(\int \phi_A(x)\left|\int \frac{\tilde{f}(x+y)(\phi_A^{-\frac12}(x+y)-\phi_A^{-\frac12}(x))
-\tilde{f}(x-y)(\phi_A^{-\frac12}(x)-\phi_A^{-\frac12}(x-y))}
{|y|^{\frac{3}{2}}}dy\right|^2dx\right)
^{\half}\\
& \leq C\int|y|^{-\frac32}\left(\int \phi_A(x)\left(\tilde{f}(x+y)(\phi_A^{-\frac12}(x+y)-\phi_A^{-\frac12}(x))\right)^2dx\right)
^{\frac12}dy\\
& \leq C\sum_{j=1}^{4}\int_{\Omega_{j,1}}|y|^{-\frac32}
\left(\int_{\Omega_{j,2}} \phi_A(x)\left(\tilde{f}(x+y)(\phi_A^{-\frac12}(x+y)-\phi_A^{-\frac12}(x))\right)^2dx\right)
^{\frac12}dy\\
&=:G_1+G_2+G_3+G_4,
\ea\enn
where $\Omega_1: =\Omega_{1,1} \cup \Omega_{1,2}=\{|x|\leq \frac{A}{2}, |y|\leq \frac{A}{4}\}$.
For the sake of simplicity,
we write $\Omega_j := \Omega_{j,1} \cup \Omega_{j,2}$ for the remaining three regimes, $2\leq j\leq 4$,
and take
$\Omega_2:=\{|x|\geq \frac{A}{2}, |y|\leq \frac{A}{4}\}$, $\Omega_3:=\{|x|\leq \frac{|y|}{2}, |y|\geq \frac{A}{4}\}$ and $\Omega_4:=\{|x|\leq \frac{|y|}{2}, |y|\geq \frac{A}{4}\}$.
Then, the proof of \eqref{est-g} is reduced to estimating $G_k$, $1\leq k\leq 4$.

First note that $\phi_A^{-\frac12}(x+y)-\phi_A^{-\frac12}(x)=0$ for $(x, y)\in \Omega_1$,
and so  $G_1=0$.

Moreover, using the mean valued theorem we get that for some $0\leq \t\leq 1$,
\ben
\phi_A(x)|\phi_A^{-\frac12}(x+y)-\phi_A^{-\frac12}(x)|^2=\frac{1}{4}\frac{\phi_A(x)}{\phi^3_A(x+\t y)}|\phi^\prime_A(x+\t y)|^2|y|^2.
\enn
For  $\frac{A}{2}\leq |x|\leq 3A$ and $|y|\leq \frac{A}{4}$, we have $\frac{A}{4}\leq |x+\t  y|\leq 4A$,
and thus
\ben
\phi_A(x)|\phi_A^{-\frac12}(x+y)-\phi_A^{-\frac12}(x)|^2=\frac{1}{4A^2}\frac{\phi_A(x)}{\phi^3_A(x+\t y)}|\phi^\prime(\frac{x+\t y}{A})|^2|y|^2\leq \frac{C}{A^2}|y|^2.
\enn
While for $|x|\geq 3A$ and $|y|\leq \frac{A}{4}$,
we have $\frac{|x|}{2}\leq |x+\t y|\leq \frac{3|x|}{2}$,
and thus
\ben
\phi_A(x)|\phi_A^{-\frac12}(x+y)-\phi_A^{-\frac12}(x)|^2
\leq C\frac{\phi_A(\frac x2)}{\phi^3_A(\frac {3x}{2})}|\phi^\prime_A(x+\theta y)|^2|y|^2
\leq \frac{C}{A^2}|y|^2.
\enn
Hence, we obtain
\be\label{g2}
G_2\leq \frac{C}{A}\int_{0}^{\frac{A}{4}}y^{-\half}dy\|\tilde{f}\|_{L^2}\leq CA^{-\half}\|\tilde{f}\|_{L^2}.
\ee

Regarding $G_3$, we shall use the fact that
\be\label{1}
\phi_A(x)|\phi_A^{-\frac12}(x+y)-\phi_A^{-\frac12}(x)|^2\leq C\phi_A(x)(\phi_A^{-1}(x+y)+\phi_A^{-1}(x)).
\ee
For $(x, y)\in \Omega_3$,
we have $\frac{|y|}{2}\leq |x+y|\leq \frac{3|y|}{2}$, so
\ben
\phi_A(x)\phi_A^{-1}(x+y)\leq C\phi_A^{-1}(\frac{3|y|}{2})\leq C|y|^a,
\enn
which implies
\ben
\phi_A(x)|\phi_A^{-\frac12}(x+y)-\phi_A^{-\frac12}(x)|^2\leq C(1+|y|^a),
\enn
and thus
\be\label{g3}
G_3\leq C\int_{\frac{A}{4}}^{+\infty}y^{-\frac{3}{2}}(1+y^{\frac{a}{2}})dy\|\tilde{f}\|_{L^2}
\leq CA^{\frac{a-1}{2}}\|\tilde{f}\|_{L^2}.
\ee

The proof for the estimate of $G_4$ also relies on \eqref{1}.
In fact, for $(x, y)\in \Omega_4$, we have $|x+y|\leq 3|x|$,
and thus
\ben
\phi_A(x)\phi_A^{-1}(x+y)\leq C\phi_A(x)\phi_A^{-1}(3x)\leq C,
\enn
and, by \eqref{1},
\ben
\phi_A(x)|\phi_A^{-\frac12}(x+y)-\phi_A^{-\frac12}(x)|^2\leq C.
\enn
It follows that
\be\label{g4}
G_4\leq C\int_{\frac{A}{4}}^{+\infty}y^{-\frac{3}{2}}dy\|\tilde{f}\|_{L^2}\leq CA^{-\half}\|\tilde{f}\|_{L^2}.
\ee
Thus, combing\eqref{g2}, \eqref{g3} and \eqref{g4} altogether we obtain \eqref{est-g}, as claimed.

Now, plugging \eqref{k1}, \eqref{k2} and \eqref{k34} into \eqref{m}
we get that for $A$ large enough,
\ben
\int (|D^\half f|^2 +|f|^2)\phi_A -3Q^{2}f_1^2-Q^{2}f_2^2dx
\geq C_1\|\tilde{f}\|_{H^\half}^2-C_2Scal(f).
\enn
Hence, in order to finish the proof,
it remains to show that there exists $C>0$ such that for $A$ sufficiently large
\be \label{wtf-H12-esti}
\|\tilde{f}\|_{H^\half}^2\geq C\int(|D^\half f|^2+|f|^2)\phi_Adx.
\ee
Actually, since $\|\tilde{f}\|^2_{L^2}=\int|f|^2\phi_Adx$ and
\ben\ba
\|D^\half \tilde{f}\|^2_{L^2}&=\int|D^\half f\phi_A^\half+(D^\half\tilde{f}-D^\half(\tilde{f}\phi_A^{-\half})\phi_A^\half)|^2dx
=\int|D^\half f\phi_A^\half-g|^2dx\\
&=\int|D^\half f|^2\phi_Adx+\int|g|^2dx-2{\rm Re}\int D^\half f\phi_A^\half\ol{g}dx,
\ea\enn
where $g$ is given by \eqref{g},
we apply \eqref{est-g} to derive that for $A$ large enough
\ben
\|D^\half \tilde{f}\|^2_{L^2}+\|\tilde{ f}\|^2_{L^2}\geq C\int(|D^\half f|^2+|f|^2)\phi_Adx.
\enn
This yields \eqref{wtf-H12-esti} and finishes the proof of Lemma \ref{Lem-coer-local}.
\end{proof}

\medskip

\noindent{\bf Acknowledgements.}
D. Cao is supported by NSFC(Grant No. 12371212). Y. Su is supported by NSFC (No. 12371122).
D. Zhang  is grateful for the support by NSFC (No. 12271352, 12322108)
and Shanghai Rising-Star Program 21QA1404500.

\end{document}